\numberwithin{equation}{section}  
\newcommand{\C}{\mathbb{C}}
\newcommand{\Z}{\mathbb{Z}}
\newcommand{\Q}{\mathbb{Q}}
\newcommand{\ii}{\operatorname{i}}
\newcommand{\eps}{\varepsilon}
\newcommand{\vol}{\operatorname{vol}}
\newcommand{\PP}{\mathbb{P}}
\newcommand{\Bl}{\operatorname{Bl}}
\newcommand{\Aut}{\operatorname{Aut}}
\newcommand{\Bs}{\operatorname{Bs}}
\newcommand{\Ch}{\operatorname{Ch}}
\newcommand{\Ka}{\operatorname{Ka}}
\newcommand{\fd}{\mathfrak{d}}
\newcommand{\fk}{\mathfrak{k}}
\newtheorem{thm}{Theorem}[section]
\newtheorem{prop}[thm]{Proposition}
\newtheorem{lemma}[thm]{Lemma}
\newtheorem*{lemma*}{Lemma}
\newtheorem{cor}[thm]{Corollary}
\theoremstyle{definition}
\newtheorem{definition}[thm]{Definition}
\theoremstyle{remark}
\newtheorem{rmk}[thm]{Remark}
\title{Some applications of canonical metrics to Landau-Ginzburg models}
\author{Jacopo Stoppa}
\date{\today}
\begin{document}

\maketitle

\begin{abstract} It is known that a given smooth del Pezzo surface or Fano threefold $X$ admits a choice of log Calabi-Yau compactified mirror toric Landau-Ginzburg model (with respect to certain fixed K\"ahler classes and Gorenstein toric degenerations). Here we consider the problem of constructing a corresponding map $\Theta$ from a domain in the complexified K\"ahler cone of $X$ to a well-defined, separated moduli space $\mathfrak{M}$ of polarised manifolds endowed with a canonical metric. We prove a complete result for del Pezzos and a partial result for some special Fano threefolds. The construction uses some fundamental results in the theory of constant scalar curvature K\"ahler metrics. As a consequence $\mathfrak{M}$ parametrises $K$-stable manifolds and the domain of $\Theta$ is endowed with the pullback of a Weil-Petersson form.\\
\textbf{MSC 2010:} 14J33, 32Q26 (Primary) 53C55 (Secondary)  
\end{abstract}
 
\section{Introduction}
In the context of mirror symmetry for Calabi-Yau manifolds, it is expected that the mirror map, in one direction, sends (complexified) K\"ahler classes on $X$ to complex structures on the mirror $X^{\vee}$, where both $X$ and $X^{\vee}$ are compact Calabi-Yau. By a classical result of Schumacher \cite{Schumacher_CY}, there is a \emph{separated} coarse moduli space of such complex structures on $X^{\vee}$ compatible with any fixed K\"ahler class, which moreover carries a natural Weil-Petersson metric. It is interesting to study the pullback of this metric to the complexified K\"ahler cone (see e.g. the discussion by Wilson \cite{Wilson_sectional}, Section 1, and Remark \ref{SYZRmk} below). 

The situation for Fano manifolds is rather different. The mirror to a Fano $X$ is expected to be a Landau-Ginzburg model $Y$, i.e. a noncompact Calabi-Yau manifold endowed with a noncostant holomorphic function $f\!: Y \to \C$. Such objects do not have a good moduli theory in general. As a consequence, \emph{compactified} Landau-Ginzburg models play an important role in mirror symmetry for Fano manifolds, see for example the discussion by Katzarkov, Kontsevich and Pantev in \cite{KatzKontPant}, Section 2.1, and the related results by Cheltsov and Przyjalkowski \cite{CheltPrz_KKP}. Here we concentrate on the notion of \emph{log Calabi-Yau compactification of a toric Landau-Ginzburg model} explained e.g. in \cite{Przyjalkowski_CYLG}, Definitions 6 and 7. 

\subsection{Compactified toric LG models} Fix a pair $(X, H)$ given by a Fano $X$ and an effective divisor $H$. Following Givental \cite{Givental_toric}, the general principle of \emph{Hodge theoretic mirror symmetry} for Fano manifolds states that genus $0$ Gromov-Witten theory on $(X, H)$ should be equivalent to the computation of the classical periods of a regular function $f$ on a noncompact Calabi-Yau manifold $Y$, with respect to a suitable choice of holomorphic volume form $\Omega$. The pair $(f\!: Y \to \C, \Omega)$ is known as a Landau-Ginzburg model (LG model). 

The notion of toric LG model, recalled below, selects a particular class of genus $0$ Gromov-Witten invariants to turn this principle into a definition. As explained in \cite{Coates_quantum}, Section B, these are the \emph{$1$-pointed, genus $0$ Gromov-Witten invariants with descendent}
\begin{equation*}
\langle \beta \psi^a \rangle_{0,1,\beta} := \int_{[X_{0,1,\beta}]^{\operatorname{vir}}} \operatorname{ev}^*(\beta) \cup \psi^a \in \Q,
\end{equation*}
where $X_{0,1,\beta}$ denotes the relevant moduli space of stable maps with $1$ marked point with degree $\beta \in H^*(X, \Z)$, with virtual fundamental class $[X_{0,1,\beta}]^{\operatorname{vir}}$, $\operatorname{ev}$ denotes evaluation at the image of the marked point, and $\psi \in H^2(X_{0,1,\beta})$ denotes the first Chern class of the universal cotangent line bundle over $X_{0,1,\beta}$. 
\begin{definition}[\cite{Przyjalkowski_CYLG}, Definitions 6 and 7]\label{CompactLGDef} A \emph{log Calabi-Yau compactification of a toric LG model} for $(X, H)$ is a projective morphism $f\!:Z\to \PP^1$, satisfying the following conditions
\begin{itemize}
\item[$(i)$] \emph{log Calabi-Yau condition:} $Z$ is smooth and we have $-K_Z \sim f^{-1}(\infty)$;
\item[$(ii)$] \emph{period condition:} setting $Y:= Z \setminus f^{-1}(\infty)$, there is a dense open torus $(\C^*)^n \subset Y$ on which $f$ restricts to a \emph{toric LG model} for $(X, H)$, that is, a Laurent polynomial such that its \emph{classical period},    
\begin{equation*}
I_f(t) := \left(\frac{1}{2\pi \ii}\right)^n\int_{\Gamma} \frac{\Omega}{1-t f}   
\end{equation*}
equals the \emph{constant term of the regularized $I$-series for $(X, H)$ in genus $0$} (also known as the \emph{regularised quantum period} of $(X, H)$, see \cite{Coates_quantum}, Section B),
\begin{equation*}
\widetilde{I}^{X, -K_X, H}_0(t) := 1 + \sum_{\beta \in H_2(X, \Z)} (-K_X\cdot\beta)! \langle [\operatorname{pt}] \psi^{-K_X\cdot \beta -2}\rangle_{0,1, \beta} e^{-\beta\cdot H} t^{-K_X\cdot\beta},
\end{equation*}
namely we have the identity of convergent formal power series in a neighbourhood of the origin
\begin{equation*}
I_f(t) = \widetilde{I}^{X, -K_X, H}_0(t). 
\end{equation*}   
Here, the standard holomorphic volume form on $(\C^*)^n$ is given by 
\begin{equation*}
\Omega = \left(\frac{1}{2\pi \ii}\right)^n \frac{dx_1}{x_1} \wedge \cdots \wedge \frac{dx_n}{x_n}, 
\end{equation*}
and the integration cycle is
\begin{equation*}
\Gamma = \{|x_1| = \cdots = |x_n| = 1\};
\end{equation*}
\item[$(iii)$] \emph{toric condition:} the toric LG model $f$ appearing in $(ii)$ arises from a corresponding toric degeneration of $X$, i.e. the Fano manifold $X$ specialises to a toric variety $T_X$ such that the fan polytope $F(T_X)$ equals the Newton polytope of $f$. 
\end{itemize}
\end{definition}
Toric LG models, in themselves, have been studied in depth since the classical work of Givental \cite{Givental_toric} by several authors, with fundamental contributions by Coates, Corti, Galkin, Golyshev, Kasprzyk (see e.g. \cite{CoatesCorti_ECM}).
\begin{rmk} The explicit form of $\widetilde{I}^{X, -K_X, H}_0(t)$ shows that it corresponds to the ``polarisation" (big divisor) $-K_X + H$. It is standard to extend the above definition to allow a complexified effective divisor $H$ (i.e. a $\C$-divisor with effective real part) and to regard $-K_X + H$ as a generalisation of a complexified K\"ahler class $[\omega_{\C}]$.
\end{rmk}
\begin{rmk} Katzarkov, Kontsevich and Pantev (\cite{KatzKontPant}, Section 2.1) explain how the homological mirror symmetry approach extends (conjecturally) to Fano manifolds. In particular, if $(f\!: Y \to \C, \Omega)$ is mirror to a Fano $X$ with complexified K\"ahler form $\omega_{\C}$, then there should be an equivalence
\begin{equation*}
\operatorname{Fuk}(X, \omega_{\C}) \cong \operatorname{MF}(Y, f) 
\end{equation*}
between the Fukaya category of $(X, \omega_{\C})$ (a symplectic invariant) and the category of matrix factorisations of $(f\!: Y \to \C, \Omega)$ (a holomorphic invariant), see \cite{KatzKontPant}, Table 1. In the other direction, one expects an equivalence  
\begin{equation*}
D^b(X) \cong \operatorname{FS}(f\!: Y \to \C, (\omega_{Y})_{\C}, \Omega)
\end{equation*}
between the bounded derived category of $X$ (a holomorphic invariant) and the Fukaya-Seidel category of $(f\!: Y \to \C, \Omega)$ with respect to a suitable choice of complexified K\"ahler form $(\omega_{Y})_{\C}$ (a symplectic invariant). 

The work \cite{KatzKontPant} explains how the homological mirror symmetry approach leads to the study of a natural class of log Calabi-Yau compactifications of $(f\!: Y \to \C, \Omega)$, called tame compactifications, for which homological mirror symmetry gives interesting predictions: for example, they should have unobstructed deformations, as well as intrinsic Hodge numbers, matching the Hodge numbers of $X$ (the latter prediction was confirmed for threefolds in \cite{CheltPrz_KKP}).
\end{rmk}

By the results of Przyjalkowski \cite{Przyjalkowski_CYLG}, Section 3, when $X$ is a del Pezzo surface, then there exists an \emph{explicit, canonical} construction of a compactified toric LG model, in the sense above, for all pairs $(X, H)$.

When $X$ is a Fano threefold, on the other hand, Przyjalkowski (\cite{Przyjalkowski_CYLG}, Theorem 1), using his recently appeared extensive work with Doran, Harder, Katzarkov, Ovcharenko \cite{DoranEtAl_modularity}, shows that the pair $(X, 0)$, for fixed Laurent polynomial $f$, always admits a log Calabi-Yau compactified toric LG model $Z$. This is not unique, but the corresponding open Calabi-Yaus $Y$ are isomorphic away from closed subsets of codimension two. 

\subsection{Maps of moduli spaces}
Suppose $X$ is a Fano manifold with a fixed toric degeneration. In the present work we consider the corresponding ``framed mirror map" $\Theta$ taking a complexified K\"ahler class $[\omega_{\C}] \in \Ka(X)_{\C}$ to (a choice of) its compactified toric LG model $Z$ (in cases when this is well defined). We study the problem of realising $\Theta$ as a map taking ``(complexified) K\"ahler moduli" to ``complex moduli" in a precise sense, that is, of constructing $\Theta$ as a holomorphic map 
\begin{equation}\label{MirrorMapModuli}
\Theta\!: \Ka(X)_{\C} \supset U \to \mathfrak{M}_U 
\end{equation}
from a \emph{large, connected open neighbourhood $U$ of the anticanonical class of $X$} (in the analytic topology) to a \emph{separated} coarse moduli space $\mathfrak{M}_U$, possibly depending on $U$, parametrising \emph{stable polarised manifolds} (recall stability is essential to obtain the Hausdorff property for $\mathfrak{M}$), and known to carry a natural Weil-Petersson metric. We focus on the case when $\mathfrak{M}_U$ is a moduli space parametrising K\"ahler manifolds admitting certain \emph{canonical metrics}. We will see that in particular these are \emph{$K$-stable polarised manifolds}.

Our approach relies on the theory of constant scalar curvature K\"ahler metrics (\emph{cscK metrics}). Suppose we can show that there exists a holomorphic family of log Calabi-Yau compactified mirrors parametrised by $U$ such that all members $Z$ have discrete automorphisms and that we can choose a \emph{fixed} class $[\eta] \in \Ka(Z)$ such that, as the complex structure on $Z$ varies holomorphically with $[\omega_{\C}] \in U$, $[\eta]$ always admits a (necessarily unique, up to automorphisms \cite{Donaldson_embeddings, BermanDarvasLu_weakminimizers}) cscK representative. Then the existence of $\Theta$ as a map of moduli spaces, as above, follows from the following classical result.
\begin{thm}[Fujiki-Schumacher \cite{FujikiSchumacher_moduli}] There exists a separated complex space $\mathfrak{M}$ which is a coarse moduli space for compact K\"ahler manifolds with discrete automorphisms, endowed with a cscK metric representing a fixed cohomology class (on the underlying smooth manifold). Moreover, $\mathfrak{M}$ carries a canonical Weil-Petersson form $\Omega_{WP}$.  
\end{thm}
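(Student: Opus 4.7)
The plan is to construct $\mathfrak{M}$ by gluing local Kuranishi slices, cut down to the cscK locus and quotiented by the (finite) automorphism groups, and then to derive the Hausdorff property from uniqueness of cscK metrics. For each compact complex manifold $X_0$ with discrete $\Aut(X_0)$ carrying a cscK metric in the fixed class $[\omega_0]$, I would invoke Kuranishi's theorem to produce a versal analytic family $\mathcal{X}\to B$ of complex deformations, with $B$ a germ at $0$ in $H^1(X_0, T_{X_0})$. Discreteness of $\Aut(X_0)$ means that $\Aut(X_0)$ acts on $B$ as a finite group, and $B/\Aut(X_0)$ is a reasonable local model for isomorphism classes of nearby complex structures.

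Next, I would identify the cscK locus $B^{\mathrm{cscK}}\subset B$ of those $t\in B$ such that $X_t$ admits a cscK representative of $[\omega_0]$. When $\Aut(X_0)$ is discrete, $X_0$ carries no nontrivial holomorphic vector fields, so the linearised cscK (Lichnerowicz) operator is invertible; the implicit function theorem in the style of LeBrun--Simanca then produces a smooth family of cscK metrics on all sufficiently nearby fibres, so $B^{\mathrm{cscK}}$ contains a full neighbourhood of $0$. The quotients $B^{\mathrm{cscK}}/\Aut(X_0)$ glue via Kuranishi versality into a complex space $\mathfrak{M}$, with transition maps computed from the (unique up to automorphisms) cscK metrics used to normalise the slices.

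The main obstacle is the separatedness of $\mathfrak{M}$. This is precisely where uniqueness of cscK metrics enters: given two putatively distinct points with no disjoint neighbourhoods, a diagonal argument produces a sequence of cscK manifolds $X_{t_n}$ that admit two non-isomorphic complex-analytic limits both supporting cscK metrics in $[\omega_0]$; the uniqueness result of Donaldson and Berman--Darvas--Lu (\cite{Donaldson_embeddings, BermanDarvasLu_weakminimizers}), together with elliptic regularity and compactness for the cscK equation with an \emph{a priori} bound on the complex structure, forces an isomorphism between the two limits intertwining the cscK metrics, contradicting distinctness. I expect this step to be the hardest and to rely essentially on modern PDE input beyond the classical Kodaira--Spencer picture.

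Finally, the Weil--Petersson form $\Omega_{WP}$ is defined at a point $[X_t]\in\mathfrak{M}$ by the $L^2$-Hermitian pairing on $H^1(X_t, T_{X_t})$, using harmonic representatives computed with the cscK metric on $X_t$ and the induced volume form. Independence of choices and closedness $d\Omega_{WP}=0$ are most efficiently verified from the moment map / symplectic reduction viewpoint on the cscK equation due to Fujiki and Donaldson, in which $\Omega_{WP}$ is realised as the descended K\"ahler form on a symplectic quotient of an infinite-dimensional configuration space of almost-complex structures, so that closedness is inherited from a flat ambient K\"ahler form.
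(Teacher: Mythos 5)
This theorem is quoted in the paper as a classical external result of Fujiki and Schumacher \cite{FujikiSchumacher_moduli}; the paper itself gives no proof, so your proposal can only be compared with the standard argument in the literature. Your outline is a faithful reconstruction of that argument: local Kuranishi slices, openness of the cscK condition via invertibility of the Lichnerowicz operator when there are no holomorphic vector fields, separation via uniqueness of the cscK representative, and the Weil--Petersson form as the $L^2$ pairing on harmonic Kodaira--Spencer classes, with closedness seen through the Fujiki--Donaldson moment map picture. Three soft spots are worth flagging. First, the Kuranishi base $B$ is in general an analytic \emph{subspace} germ of $H^1(X_0,T_{X_0})$ (obstructions may be present), so $\mathfrak{M}$ is a complex space rather than a manifold; and ``discrete implies finite'' for the group acting on $B$ requires Fujiki's/Lieberman's theorem that the subgroup of $\Aut(X_0)$ fixing the K\"ahler class has finitely many components. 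Second, your separation step is heavier than necessary: once uniqueness (here \cite{Donaldson_embeddings, BermanDarvasLu_weakminimizers}) forces the comparison isomorphisms $\phi_n\colon X_{t_n}\to X'_{t'_n}$ to be isometries between metrics that already converge smoothly within the two Kuranishi families, Arzel\`a--Ascoli for isometries yields the limiting biholomorphism directly --- no a priori compactness theory for the cscK equation is needed. Third, the original 1990 proof predates these global uniqueness theorems; it treats points of $\mathfrak{M}$ as pairs (manifold, extremal metric) and uses only the local uniqueness coming from the implicit function theorem, which is why the statement is phrased as a moduli space of manifolds \emph{endowed with} a cscK metric. None of these affects the correctness of your sketch as a modern proof.
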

\begin{rmk}Although we will not use this here, we note that this result has been reproved and extended to allow continuous automorphisms by Dervan and Naumann \cite{DervanNaumann_moduli}. On the other hand an analogous result for \emph{extremal} metrics (i.e. critical points of the $L^2$ norm of the scalar curvature) has not been proved yet.  
\end{rmk}
By the results of \cite{Donaldson_lower_bounds, Stoppa_cscK_stabili} on $K$-semistability and discrete automorphisms, the K\"ahler manifolds appearing in the Fujiki-Schumacher Theorem are in fact $K$-stable (see \cite{BermanDarvasLu_weakminimizers} for the case of continuous automorphisms).
\subsection{del Pezzo surfaces}\label{delPezzoSecIntro}
We first prove a rather complete result in the case of smooth del Pezzo surfaces. We will also prove a similar (weaker) result for some special Fano threefolds (Theorem \ref{MainThmFanos}).
\begin{thm}\label{MainThmDelPezzo} Suppose $X$ is a smooth del Pezzo surface with a fixed Gorenstein toric del Pezzo degeneration. Then it is possible to construct the corresponding mirror map $\Theta$ at the level of moduli spaces as in \eqref{MirrorMapModuli}, where $U \subset \Ka(X)_{\C}$ is an explicit, relatively compact but arbitrarily large open neighbourhood of $c_1(X)$ and $\mathfrak{M}_U$ is a Hausdorff coarse moduli space of cscK (hence $K$-stable) polarised rational elliptic surfaces. In particular, the domain $U$ carries the $(1,1)$-form $\Theta^* \Omega_{WP}$, where $\Omega_{WP}$ is the canonical Weil-Petersson metric on $\mathfrak{M}_U$.
\end{thm}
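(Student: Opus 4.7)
The plan is to combine Przyjalkowski's explicit canonical compactification with Arezzo-Pacard type existence results for cscK metrics on blowups, and then feed the outcome into the Fujiki-Schumacher machine.

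First I would define $\Theta$ at the level of complex manifolds. Writing a complexified K\"ahler class as $[\omega_{\C}] = -K_X + H$ for $H$ in some neighbourhood of the origin, Przyjalkowski's construction (Section 3 of \cite{Przyjalkowski_CYLG}) canonically produces a compactified toric LG model $f\!: Z \to \PP^1$ whose defining Laurent polynomial has coefficients depending holomorphically on $H$. Setting $F = f^{-1}(\infty)$, the relation $-K_Z \sim F$ combined with $F^2 = 0$ and adjunction gives $K_F = (K_Z + F)|_F = 0$, so the generic fibre is an elliptic curve and $Z$ is a rational elliptic surface. In particular the smooth $4$-manifold underlying $Z$ is fixed throughout the family, diffeomorphic to $\PP^2$ blown up at $9$ points, and we have a holomorphic family of complex structures on a fixed smooth manifold, parametrised by an open $U \subset \Ka_{\C}(X)$ containing $c_1(X)$.

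Next I would produce a fixed cohomology class $[\eta]$ on this smooth manifold which remains K\"ahler throughout and admits a cscK representative for each complex structure in the family. Viewing $Z$ as the blowup $\pi\!: Z \to \PP^2$ at the base locus of a pencil of cubics, I would look for $[\eta]$ of the form $\pi^*[\omega_{FS}] - \sum_{i=1}^9 \eps_i [E_i]$ with $\eps_i > 0$ sufficiently small. For a single fixed complex structure, existence of a cscK representative in this class would follow from the Arezzo-Pacard theorem applied to the Fubini-Study metric on $\PP^2$, together with its Arezzo-Pacard-Singer refinement to cope with the continuous automorphism group of $\PP^2$ and the non-generic positions of the blown-up points. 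The main obstacle is making this existence statement uniform over $[\omega_{\C}] \in U$: one must show that the configurations of nine points supplied by Przyjalkowski's construction satisfy the relevant balancing hypothesis throughout an arbitrarily large neighbourhood of $[\omega_{\C}] = -K_X$, and that the $\eps_i$ can be chosen independently of $[\omega_{\C}]$. I would attack this by combining an explicit description of Przyjalkowski's configuration at the distinguished parameter $H = 0$ with the openness of the cscK locus under simultaneous deformations of complex structure and K\"ahler class, via an implicit function theorem argument applied to the scalar curvature map.

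Once such an $[\eta]$ is in hand, the Fujiki-Schumacher Theorem produces a separated coarse moduli space $\mathfrak{M}_U$ parametrising cscK K\"ahler manifolds in class $[\eta]$ with discrete automorphisms, together with its canonical Weil-Petersson form $\Omega_{WP}$. Discreteness of $\Aut(Z_{[\omega_{\C}]}, [\eta])$ holds for generic $[\omega_{\C}]$ because generic rational elliptic surfaces with polarisation transverse to the fibre class have finite automorphism group, and this can be arranged on an arbitrarily large $U$. The map $\Theta\!: U \to \mathfrak{M}_U$ is then defined by $[\omega_{\C}] \mapsto [(Z_{[\omega_{\C}]}, [\eta])]$ and is holomorphic because $Z_{[\omega_{\C}]}$ depends holomorphically on $[\omega_{\C}]$. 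K-stability of each image point follows from cscK plus discreteness by \cite{Donaldson_lower_bounds, Stoppa_cscK_stabili}, and $\Theta^* \Omega_{WP}$ is the desired $(1,1)$-form on $U$.
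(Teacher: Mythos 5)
Your overall architecture (Przyjalkowski's canonical compactification, a fixed class $[\eta]$ made cscK by blowup theorems, then Fujiki--Schumacher) is the paper's architecture, but two essential steps are missing. First, the Arezzo--Pacard--Singer/Sz\'ekelyhidi blowup theorem in the presence of continuous automorphisms is \emph{conditional}: it requires the balancing condition $\sum_i \mu(p_i)=0$ for some point of the $G$-orbit of the configuration, equivalently Chow polystability of the cycle of blown-up points under $\Aut_0$ of the base. You invoke the theorem without verifying this hypothesis, and verifying it is the technical heart of the paper: explicit moment-map and Hilbert--Mumford weight computations carried out polygon by polygon, including at the non-reduced strata. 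Note in particular that the anticanonical class $c_1(X)$ itself typically lies in the collision locus (for the cubic surface the base locus there is $3q_1+3q_2+3q_3$), so $Z_{c_1(X)}$ is an \emph{iterated} blowup and the existence of a cscK metric at your distinguished parameter $H=0$ --- which your implicit-function-theorem argument takes as its starting point --- is itself one of the harder cases, handled in the paper by nested parameters $0<\eps_3\ll\eps_2\ll\eps_1\ll 1$ and stability checks at each stage. Relatedly, your uniform model ``$Z=$ blowup of $(\PP^2,\omega_{FS})$ at nine points'' is only the metric starting point when $T_{\nabla}\cong\PP^2$ (the cubic surface); for the other degenerations the paper starts from an orbifold K\"ahler--Einstein metric on the singular dual toric variety $T_{\nabla}$ (Wang--Zhu/Zhu, via the barycenter condition) and must additionally resolve its orbifold points crepantly using the Arezzo--Pacard resolution theorem.

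Second, your mechanism for uniformity cannot deliver the statement as claimed. Openness of the cscK condition under simultaneous deformation of complex structure and K\"ahler class yields only a \emph{small} neighbourhood of $c_1(X)$, whereas the theorem asserts an explicit, \emph{arbitrarily large} $U$. The paper obtains largeness by proving stability of the base-locus cycle for all $H$ away from the explicit hyperplane arrangement $\mathcal{D}$ where base points collide, showing that the admissible blowup parameter $\bar{\eps}$ depends only on the separation $\delta$ of the points and their distance to the torus-fixed (singular) locus of $T_{\nabla}$, and then gluing neighbourhoods of the special strata to $\mathcal{D}_{>\delta}$ by the deformation theory of cscK metrics. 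Without the uniform dependence of $\bar{\eps}$ on $\delta$ alone, the gluing fails and one is left with a small $U$, i.e.\ a strictly weaker statement (comparable to the paper's conditional threefold result rather than Theorem \ref{MainThmDelPezzo}).
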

The proof is carried out in Sections \ref{PP2Sec}-\ref{7aSec}, following the outline we provide in the present Section.
\begin{rmk}\label{twistedCsckRemark} Once we construct the map $\Theta$ as above, we can keep track of the map $f$ as induced by a section of the sheaf over $\mathfrak{M}_U$ with fibre over $Z$ given by $H^0(Z, -K_Z)$ (indeed, as we recall below, the map to $\PP^1$ is given by a canonical extension of the corresponding rational pencil).

It is also natural to consider the map $\widetilde{\Theta}$ taking $[\omega_{\C}]$ to the morphism $f\!: Z \to \PP^1$, rather than to the total space $Z$. From the point of view of canonical metrics there is a natural extension of the cscK equation uniformising such maps, namely
\begin{equation}\label{twistedCscK}
s(\eta) - \varepsilon \Lambda_{\eta} f^* \omega_{FS} = c
\end{equation}
where $[\eta]$ is a K\"ahler class on $Z$, $s(\eta)$ denotes the scalar curvature, $\varepsilon > 0$ is a parameter and $c$ is a uniquely determined constant, see the work of Dervan-Ross \cite{DervanRoss_stablemaps}. Since in our case $Z$ has discrete automorphisms, standard arguments in elliptic theory show that in the setup of Theorem \ref{MainThmDelPezzo} the equation \eqref{twistedCscK} is also solvable on $Z$ for sufficiently small $\varepsilon > 0$, and moreover that $\varepsilon$ can be chosen uniformly on $U$. As a consequence, we do obtain a corresponding map to the \emph{set} of solutions of \eqref{twistedCscK} for fixed $[\eta]$ and $\varepsilon > 0$. However, the existence of a separated coarse moduli space $\widetilde{\mathfrak{M}}$ for such solutions is not yet known, although it is expected. Thus, modulo this conjectural existence result, Theorem \ref{MainThmDelPezzo} would also provide a map $\widetilde{\Theta}\!: U \to \widetilde{\mathfrak{M}}_U$.
\end{rmk}
\begin{rmk}\label{SYZRmk} By analogy with the Calabi-Yau case discussed in \cite{Wilson_sectional}, Section 1, one could ask whether, as the parameters $[\omega_{\C}]$  and $[\omega_Z]$ approach some appropriate limiting values, the corresponding pullback form $\Theta^* \Omega_{WP}$ is approaching the $L^2$ metric on $\Ka(X)_{\C}$ induced by a sequence of degenerating K\"ahler structures on $X$. Note that in fact the classical work of Auroux \cite{Auroux_complement} does not assume completeness or Ricci flatness, that is, the restriction of $\omega_Z$ to $Y = Z \setminus f^{-1}(\infty)$ would give an admissible choice of K\"ahler form. Similarly, the well-known computations of Leung \cite{Leung_woCorrections}, which prove the analogous statement in the semi-flat case, do not actually depend on Ricci flatness (rather, Leung shows that his construction is always compatible with the real Monge-Amp\`ere equation). 
\end{rmk}
Let us outline the basic ingredients for the proof of Theorem \ref{MainThmDelPezzo}. The fixed Gorenstein toric degeneration $T_X = T_{\Delta}$ of $X$, where $\Delta$ is a reflexive Fano polytope, provides a toric LG model. There is a corresponding Gorenstein dual toric variety $T_{\nabla}$, such that $\nabla$ is the polar dual of $\Delta$, with an anticanonical rational pencil $f\!: T_{\nabla} \dashrightarrow \PP^1$. Following \cite{Przyjalkowski_CYLG}, Section 3, the canonical log Calabi-Yau compactification $Z$ of $X$ is obtained by constructing a crepant resolution of the (orbifold) singularities of $T_{\nabla}$, as well as of the base locus $\Bs(f)$ of $f$ (one shows that the two sets are disjoint). Using explicit formulae for the mirror map, one sees that for $[\omega_{\C}]$ lying in a dense open locus, $\Bs(f)$ is reduced and varies holomorphically with $[\omega_{\C}]$. Then we can use a combination of results on K\"ahler-Einstein (orbifold) metrics \cite{WangZhu_toric, Zhu_orbisolitons} and cscK metrics on blowups and resolutions of orbifold singularities (Arezzo-Pacard type theorems such as \cite{ArezzoPacard, ArezzoPacardSinger, Gabor_blowup, Arezzo_kummer}) in order to construct a family of cscK classes on $Z$. We emphasise that this holds even in cases when $T_{\nabla}$ does \emph{not} admit an orbifold K\"ahler-Einstein metric, by choosing the order of metric blowups (i.e. the corresponding K\"ahler parameters) suitably. This family of cscK polarisations depends on $[\omega_{\C}] \in \Ka(X)_{\C}$, but by analysing the estimates appearing in the proofs of these results, one can see that it is possible to pick a single cscK class $[\eta]$ which works for all $[\omega_{\C}]$ as long as the distances between the base points and between the base locus and the torus-fixed points of $T_{\nabla}$ are uniformly bounded away from zero. 

However, this generic result is not enough for our purposes, since there are strata in $\Ka(X)_{\C}$ such that the base locus $\Bs(f)$ is nonreduced. In many cases, these are the most interesting strata, containing for example the anticanonical class $c_1(X)$. By construction, the corresponding complex structure on $Z$ is given by an iterated blowup. We show that it is possible to construct cscK classes on such blowups following the strategy above, and that moreover neighbourhoods of such strata can be glued to the generic locus, by using the deformation theory of cscK metrics (\cite{FujikiSchumacher_moduli, LeBrunSimanca, Szekelyhidi_moduli}). Again, uniform estimates are essential. 

\begin{rmk}\label{AdiabaticRmk} The compactified LG model $Z$ is always a rational elliptic surface. The $K$-(in)stability of rational elliptic surfaces has been studied by Hattori \cite{Hattori_CYfibrations} in the case of adiabatic classes $H + f^*c_1(\mathcal{O}_{\PP^1}(k))$, for $k \gg 1$. These are very far from the classes we use in our proof of Theorem \ref{MainThmDelPezzo}. It would be interesting to see whether Hattori's results can be used to prove an analogue of Theorem \ref{MainThmDelPezzo} for different polarisations; the main question is whether the adiabatic parameter $k$ can be chosen uniformly. This question can be studied using the recent results of Hashizume and Hattori \cite{Hattori_moduli}. However the following observation shows that this approach cannot work in complete generality.

\begin{lemma*}[Section \ref{AdiabaticSec}] There exist adiabatically $K$-unstable compactified toric LG models of smooth del Pezzo surfaces.
\end{lemma*}     

\end{rmk}

Although the strategy of the proof of Theorem \ref{MainThmDelPezzo} is the one outlined above, the actual proof requires a certain case by case analysis. The approach we follow in our presentation is to give full details for the cases which illustrate all the relevant features: in particular, when the surface $X$ is given by $\PP^2$, $\Bl_p \PP^2$ or a del Pezzo of degrees 3, 4, 6, endowed with certain Gorenstein toric degenerations (see Sections \ref{PP2Sec}-\ref{P8aSec}). The other cases are treated much more briefly (Sections \ref{QuadraticConeSec}-\ref{7aSec}). For our case by case analysis of toric mirrors of del Pezzos the works \cite{Petracci_reflexive, Przyjalkowski_CYLG} provide excellent references. In particular we will follow the numeration of reflexive del Pezzo polygons in \cite{Petracci_reflexive}, Fig. 2 (while in dimension 3 we follow the standard Kreuzer-Skarke list \cite{KreuzSkarke}).

\subsection{HMS pairs}
One advantage of the approach using canonical K\"ahler metrics is that, in the case of discrete automorphisms, their deformation theory is unostructed: a small deformation of complex structure of a cscK manifold is still cscK (in the same polarisation for the underlying smooth manifold), see \cite{FujikiSchumacher_moduli, LeBrunSimanca, Szekelyhidi_moduli}. This lends some flexibility to our construction. We provide an application of this basic fact to mirror pairs in the sense of homological mirror symmetry (HMS). 

Suppose $Z$ is a compactified toric LG model endowed with a cscK metric $\omega_Z$, as constructed above. Let $\mathfrak{M}$ denote the Fujiki-Schumacher (Hausdorff) moduli space of cscK complex structures compatible with $\omega_Z$ (in particular such polarised manifolds are $K$-stable). The pair $(Z, D := f^{-1}(\infty))$ is a log Calabi-Yau surface. Gross-Hacking-Keel (see \cite{GrossHackingKeel_LCY}, Conjecture 0.13) construct a family of del Pezzo surfaces with \emph{smooth} anticanonical divisors 
\begin{equation*}
(\bar{\mathcal{X}}, \mathcal{D}) \to S = \{|\exp(2\pi\ii\int_D(B + \ii \omega))| < 1,\,\forall\text{ effective } D\}\subset \operatorname{Pic}(Z)\otimes\C^*
\end{equation*} 
($[B + \ii \omega] \in \operatorname{Pic}(Z)\otimes\C^*$ denoting a point in the complexified K\"ahler cone) such that, for $s \in S$, the complex structure on $\bar{\mathcal{X}}$ is mirror to the K\"ahler form on $Z$ in the sense of homological mirror symmetry, that is, we have an equivalence 
\begin{equation*}
D^b(\bar{\mathcal{X}}_s) \cong \operatorname{FS}(Z \setminus D, \ii \omega|_{Z\setminus D})
\end{equation*}
between the bounded derived category of the category of coherent sheaves on $\bar{\mathcal{X}}_s$ and the Fukaya-Seidel category of $(Z \setminus D, \ii \omega|_{Z\setminus D})$ (this claim was conditional on certain results later proved by Lai-Zhou \cite{LaiZhou_mirror}). Since the cscK condition is scale invariant, we can apply this result to $Z$ up to replacing $[\omega_Z]$ with a suitable scaling (to ensure $|s| < 1$). Conversely, under the mirror map for pairs $(\bar{\mathcal{X}}, \mathcal{D})$, the fixed complex structure on $Z$ corresponds to a K\"ahler class $[\omega_{\bar{\mathcal{X}}_s}]$.
\begin{cor} There is a nonempty open neighbourhood of $[\omega_{\bar{\mathcal{X}}_s}]$ in the complexified K\"ahler cone of $\bar{\mathcal{X}}_{s}$ which maps to the moduli space $\mathfrak{M}$ under the mirror map for pairs $(\bar{\mathcal{X}}, \mathcal{D})$.
\end{cor}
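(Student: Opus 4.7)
The plan is to combine two standard ingredients: the openness of the cscK condition under small deformations of complex structure (in the discrete-automorphism setting), and the continuity -- indeed, holomorphicity -- of the GHK mirror map for pairs, regarded as an assignment from the complexified K\"ahler cone of $\bar{\mathcal{X}}_s$ to the moduli of compactified LG models.

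First I would exploit the fact that $Z$ admits a cscK metric in $[\omega_Z]$ and has discrete automorphisms: by the deformation theory of Fujiki--Schumacher \cite{FujikiSchumacher_moduli} and Sz\'ekelyhidi \cite{Szekelyhidi_moduli}, there is an open neighbourhood $W$ of $[Z]$ in the Kuranishi space of complex deformations of the underlying smooth manifold such that every $[Z'] \in W$ carries a (unique) cscK representative in the fixed class $[\omega_Z]$. Its image $V \subset \mathfrak{M}$ is then an open neighbourhood of $[Z]$. Next, I would invoke the mirror map for pairs $(\bar{\mathcal{X}}, \mathcal{D})$ from \cite{GrossHackingKeel_LCY, LaiZhou_mirror} in the reverse direction to the original GHK construction: on the relevant open subset this is a (local) inverse of a holomorphic bijection between complex moduli of $\bar{\mathcal{X}}_s$ and complexified K\"ahler moduli of $Z$, hence holomorphic, and it sends $[\omega_{\bar{\mathcal{X}}_s}]$ to $(Z, D = f^{-1}(\infty))$ by hypothesis. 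Pulling $W$ back along this map yields an open neighbourhood $U$ of $[\omega_{\bar{\mathcal{X}}_s}]$, and by construction $U$ maps into $V \subset \mathfrak{M}$.

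The hard part is justifying continuity of the mirror map for pairs as landing in the Kuranishi space of $Z$ for a \emph{fixed} polarisation $[\omega_Z]$: concretely, one must check that as we deform the K\"ahler class on $\bar{\mathcal{X}}_s$, the induced data on $Z$ moves as a variation of complex structure alone (with $D$ identified canonically as the anticanonical fibre), while the target K\"ahler class is kept fixed at $[\omega_Z]$ (up to the scaling already used to land in $S$). This is inherent to the GHK/HMS framework, which separates the K\"ahler and complex moduli in the mirror identification, but should be spelled out explicitly; modulo this bookkeeping the corollary is an immediate consequence of the openness of cscK in the discrete-automorphism case.
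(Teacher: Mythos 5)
Your argument is essentially the paper's own proof: the paper disposes of this corollary in one sentence, appealing exactly to the discreteness of $\Aut(Z)$ and the openness of the cscK condition under deformations of complex structure from \cite{FujikiSchumacher_moduli, Szekelyhidi_moduli}, with the mirror map for pairs supplying the identification of a neighbourhood of $[\omega_{\bar{\mathcal{X}}_s}]$ with nearby complex structures on $Z$. Your extra bookkeeping about the polarisation staying fixed along the family is a reasonable point the paper leaves implicit, but it does not change the route.
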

This follows at once from the fact that $Z$ has discrete automorphisms and from the results about deforming cscK complex structures. 
\subsection{Fano threefolds}\label{3foldsSecIntro}
Proving an analogue of Theorem \ref{MainThmDelPezzo} in the case of smooth Fano threefolds is much more challenging of course. Here we confine ourselves to a study of a few special Fano threefolds $X$. At present, we are only able to obtain a result for a sufficiently small domain $U \ni -K_X$, which is moreover conditional on some conjectural expectations. 
\begin{itemize}
  
\item[$(a)$] For each smooth Fano threefold $X$, the Fanosearch  programme \cite{Fanosearch} provides several Laurent polynomials $f_X$, such that $f_X$ satisfies the period condition (Definition \ref{CompactLGDef}, $(ii)$) with respect to $-K_X$.

\emph{In our applications, given $X$ in our special class, we will make a specific choice of such $f_X$ and assume that it actually arises from a toric degeneration of $X$, i.e. that the condition $(iii)$ in Definition \ref{CompactLGDef} is satisfied.}
\item[$(b)$] Fix $X$, $f_X$ as in $(a)$. In our applications, we will fix a log Calabi-Yau compactification $f\!: Z \to \PP^1$ of $f_X$ (constructed as in \cite{Przyjalkowski_CYLG}, Section 4).
 
\emph{We assume that there is a well defined holomorphic map, defined in a neighbourhood $-K_X \in  U \subset \Ka(X)_{\C}$, which associates to a class $ -K_X + H  \in U$ a log Calabi-Yau compactified LG model for $(X, H)$, obtained as a deformation of $f\!: Z \to \PP^1$}. 

This is the natural \emph{modularity condition} for a Landau-Ginzburg model, saying that it deforms along with the K\"ahler parameters of $X$. As we mentioned after Definition \ref{CompactLGDef}, it is known for del Pezzos, and it is in fact expected for Fano threefolds according to \cite{DoranEtAl_modularity}, Proposition 1.7  and the conjecture discussed in \cite{DoranEtAl_modularity}, Section 1.4 (partially proved there in Proposition 1.11 and Appendix K).

\item[$(c)$] Seyyedali and Sz\'{e}kelyhidi \cite{GaborSeyyedali} prove the analogue of the Arezzo-Pacard criterion for blowing up along submanifolds of codimension greater than $2$, and conjecture that the same result holds for codimension $2$. 

\emph{In our applications, we assume that this holds, i.e. that \cite{GaborSeyyedali}, Theorem 1 also holds for submanifolds of codimension $2$ of the compactification $Z$ chosen in $(b)$.}

\end{itemize}

\begin{thm}\label{MainThmFanos} Suppose $X$ is a Fano threefold of type $V_4$, $V_6$, $V_8$, $V_{10}$, $V_{12}$, a $(2,2)$ divisor in $\PP^2 \times \PP^2$, a $(2,2,2)$ branched cover of $(\PP^1)^3$ or a $(1,1,1,1)$ divisor in $(\PP^1)^4$. Assume that the conjectures $(a)$, $(b)$, $(c)$ above hold for $X$. Then, there exist Gorenstein toric degenerations of $X$ such that the corresponding mirror map $\Theta$ can be constructed at the level of moduli spaces as in \eqref{MirrorMapModuli}, where $U \subset \Ka(X)_{\C}$ is an open neighbourhood of $c_1(X)$, and $\mathfrak{M}$ is a Hausdorff coarse moduli space of cscK (hence $K$-stable) polarised manifolds. In particular, $U$ carries the $(1,1)$-form $\Theta^* \Omega_{WP}$.
\end{thm}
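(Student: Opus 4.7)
The plan is to transpose the strategy of Theorem \ref{MainThmDelPezzo} to dimension three, with the del Pezzo steps replaced by their threefold analogues which depend on the two conjectural inputs highlighted before the statement. For each of the eight listed Fano threefolds $X$ I would fix a Gorenstein toric degeneration $T_X = T_\Delta$ with reflexive polytope $\Delta$ drawn from the Fanosearch data; passing to the polar dual produces a toric variety $T_\nabla$ endowed with an anticanonical rational pencil $f\!: T_\nabla \dashrightarrow \PP^1$. Following the recipe of \cite{Przyjalkowski_CYLG, DoranEtAl_modularity}, a log Calabi-Yau compactification $Z$ of the associated LG model is then built as an iterated resolution: first a crepant toric resolution of the Gorenstein singularities of $T_\nabla$, then blowups resolving the base locus $\Bs(f)$, which for the cases on the list admits an explicit description as a union of curves and points disjoint from the remaining toric singularities.

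Next I would construct a cscK class $[\eta]$ on $Z$. Starting from a (possibly orbifold) K\"ahler-Einstein metric on $T_\nabla$, or on a partial toric resolution in cases where this is necessary, I would apply the Arezzo-Pacard machinery in the form available after \cite{GaborSeyyedali}, together with its conjectured codimension two extension. Concretely, the resolution steps correspond either to blowing up torus-fixed points on $T_\nabla$ (covered by the classical Arezzo-Pacard-Singer theorem), to blowing up smooth curves in the resolved base locus (handled by the conjectural codim $2$ case of Seyyedali-Sz\'ekelyhidi), or to iterated blowups (handled by the same order-selection trick used in the del Pezzo proof, which may substitute for the absence of a KE metric upstairs). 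As in dimension two, careful tracking of the estimates shows that a single class $[\eta]$ can be chosen so that it admits a cscK representative for all complex structures arising over a small neighbourhood $U$ of $c_1(X)$.

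The third step is to combine this with the mirror map supplied by \cite{DoranEtAl_modularity}, Proposition 1.7 and the conjectural material of Section 1.4 there: in a neighbourhood $U$ of $-K_X \in \Ka_{\C}(X)$ we obtain a holomorphic family of Laurent polynomials, each admitting a log Calabi-Yau compactification obtained by deforming the complex structure on the underlying smooth manifold of $Z$. Since $Z$ has discrete automorphisms and carries a cscK metric in the class $[\eta]$, the deformation theory of cscK manifolds from \cite{FujikiSchumacher_moduli, Szekelyhidi_moduli} guarantees that every nearby complex structure on $Z$ is again cscK in the same class; the Fujiki-Schumacher Theorem then yields the desired holomorphic map $\Theta\!: U \to \mathfrak{M}_U$ into a separated coarse moduli space. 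The polarised manifolds involved are automatically $K$-stable by \cite{Donaldson_lower_bounds, Stoppa_cscK_stabili}, and the pullback of the Weil-Petersson form $\Omega_{WP}$ is the required $(1,1)$-form on $U$.

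The main obstacle is the cscK construction on $Z$. Each of the eight families has its own combinatorial description and demands a case by case verification that the base locus of $f$ and the toric singularities of $T_\nabla$ are of the types covered by the Arezzo-Pacard-Singer, Seyyedali-Sz\'ekelyhidi and (conjectural) codim two theorems, and moreover that the K\"ahler parameters of the successive metric blowups can be chosen uniformly in $[\omega_{\C}] \in U$. A secondary subtlety is verifying that either $T_\nabla$ or a suitable partial resolution admits an orbifold K\"ahler-Einstein metric; for the listed families one expects this from the toric theory of \cite{WangZhu_toric, Zhu_orbisolitons}, but the relevant polytope data has to be checked by hand, and when the unresolved toric variety fails to admit a KE metric one must exploit the freedom in the order of the metric blowups, exactly as in the del Pezzo proof, to produce a cscK class on $Z$ directly.
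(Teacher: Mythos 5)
Your proposal follows essentially the same route as the paper: for each threefold one fixes a Fanosearch Gorenstein toric degeneration whose polar dual $T_{\nabla}$ is (orbifold) K\"ahler--Einstein (or, as for $V_{10}$, a blowup of one, handled by choosing the order of metric blowups), resolves $\Bs(f)$ by iterated blowups of Chow-polystable cycles using Seyyedali--Sz\'ekelyhidi together with their codimension-two conjecture until the residual automorphisms are discrete, and then combines the Doran--Harder--Katzarkov--Ovcharenko--Przyjalkowski mirror map with cscK deformation theory and the Fujiki--Schumacher moduli space. The only caveats are cosmetic: in dimension three essentially every metric blowup is along a curve (so the codimension-two conjecture, i.e.\ polystability of the cycle under the linearised $\Aut_0$-action, is the central verification rather than point blowups), and the terminal singularities in the $(1,1,1,1)$ case are resolved metrically at the end rather than crepantly at the start.
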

\begin{rmk}\label{3foldsAdiabaticRmk} Our observations in Remarks \ref{twistedCsckRemark} (concerning moduli of maps) and \ref{AdiabaticRmk} (on adiabatic stability) also apply to this result (the latter with reference to the higher dimensional results proved in \cite{Hattori_moduli}, Theorem 1.1). The Lemma stated in Remark \ref{AdiabaticRmk} also generalises as follows.
\begin{lemma*}[Section \ref{AdiabaticSecQuartic}] The LG model of the quartic threefold $V_4$ appearing in Theorem \ref{MainThmFanos} is adiabatically $K$-unstable. 
\end{lemma*}
\end{rmk}

Note that $V_4$, $V_6$, $V_8$, $V_{10}$, $V_{12}$ have Picard rank $1$, so the corresponding $U$ is a disc, while the $(2,2)$ divisor, the $(2,2,2)$ branched cover and the $(1,1,1,1)$ divisor have Picard ranks $2$, $3$ and $4$ respectively.

Theorem \ref{MainThmFanos} is proved (conditionally) in Sections \ref{Quartic3foldSec}-\ref{1111Sec} by showing that, for all $X$ as in the statement except $V_{10}$ and the $(1,1,1,1)$ divisor, there is a (potential) Gorenstein toric degeneration $T_{\Delta}$ such that the dual variety $T_{\nabla}$ is smooth K\"ahler-Einstein and the base locus of the Minkowski Laurent polynomial $f$ corresponding to $-K_X$ can be resolved by repeated blowups of polystable cycles under the induced linearised action of (subgroups of) $\Aut_0(T_{\nabla})$, so that after a full resolution the residual automorphisms are discrete. The (partially conjectural) results of \cite{DoranEtAl_modularity} provide a mirror map on a neighbourhood $U$, and since the automorphisms are discrete, by the deformation theory of cscK metrics, up to shrinking $U$ the underlying deformations of $Z$ are still cscK. The case of the $(1,1,1,1)$ divisor is similar, except $T_{\nabla}$ is orbifold K\"ahler-Einstein with isolated singularities.

The assumption that $T_{\nabla}$ is (orbifold) K\"ahler-Einstein is much too restrictive (this is clear by comparing to the case of del Pezzo surfaces), and we illustrate this in the missing case $X = V_{10}$. In this case, $X$ has a Gorenstein toric degeneration such that the dual variety $T_{\nabla}$ is smooth but $\Aut_0(T_{\nabla})$ is not reductive (and so a fortiori $T_{\nabla}$ is not even cscK). Nevertheless, $T_{\nabla}$ is isomorphic to a blowup $\Bl_{E} T_{\nabla'}$ where $T_{\nabla'}$ is K\"ahler-Einstein, and we can still apply the Arezzo-Pacard Theorem by blowing up $E$ and the components of $\Bs(f)$ in a suitable order. \\ 

\noindent{\textbf{Acknowledgements.}} I am grateful to Giulio Codogni, Ruadha\'i Dervan, Annamaria Ortu, Carlo Scarpa and Roberto Svaldi for some helpful discussions. This research was carried out in the framework of the project PRIN 2022BTA242 ``Geometry of algebraic structures: moduli, invariants, deformations". The author would like to thank the Isaac Newton Institute for Mathematical Sciences, Cambridge, for support and hospitality during the programme New equivariant methods in algebraic and differential geometry, where work on this paper was undertaken. This work was supported by EPSRC grant EP/R014604/1.
\section{$X = T_{P_3} = \PP^2$}\label{PP2Sec}
In this Section we start the case by case analysis which is necessary for the proof of Theorem \ref{MainThmDelPezzo}, as outlined in Section \ref{delPezzoSecIntro}. This is completed in Section \ref{7aSec}.

We consider the case $X = \PP^2$, following the notation of \cite{Przyjalkowski_CYLG}, Section 3. Suppose $H = a_0 l$ for $a_0 \geq 0$, where $l$ denotes the class of a line. By classical work of Givental \cite{Givental_toric}, a toric LG model for $X$ is given by
\begin{equation*}
f_{\PP^2, H} = x + y + e^{-a_0}\frac{1}{x y}. 
\end{equation*}

\subsection{First step: dual polytope} The fan polytope of $X$ is given by the reflexive polygon $\Delta = P_3$ in \cite{Petracci_reflexive}, Fig. 2. Its polar dual $\nabla$ is given by the reflexive polygon $\nabla = (P_3)^{\circ} = P_9$ in the same list. The canonical construction of the log Calabi-Yau compactified toric LG model for $X$ starts by considering the singular toric variety $T_{\nabla}$. Since $\nabla$ is reflexive, $T_{\nabla}$ is Gorenstein. 

By the general theory recalled in \cite{Przyjalkowski_CYLG}, Section 2, Facts 1.-4., $T_{\nabla}$ is given by the \emph{anticanonically embedded} singular cubic surface 
\begin{equation*}
T_{\nabla} = \{x_1 x_2 x_3 = x^3_0\} \subset \PP := \PP[x_0:x_1:x_2:x_3],
\end{equation*}  
(i.e. $-K_{T_{\nabla}} = \mathcal{O}_{\PP}(1)|_{T_{\nabla}}$), with orbifold singularities at the torus fixed points. The Laurent polynomial $f_{\PP^2, H}$ is the restriction to the structure torus of $T_{\nabla}$ of the birational map $T_{\nabla} \dashrightarrow \PP^1$ defined by the pencil
\begin{equation*}
\fd_{\Delta} := |x_1 + x_2 + e^{-a_0}x_3, x_0|.
\end{equation*} 
The divisor $(x_0)$ is precisely the toric boundary of $T_{\nabla}$.

This step can be performed \emph{metrically}. The barycenter of $\Delta$ is the origin, corresponding of course to the Fubini-Study metric $\omega_{\Delta}$ on $X$ under the existence theorem of \cite{WangZhu_toric}. Moreover, $\Delta$ is in fact \emph{symmetric}, in the sense that the group $\Aut_{\Delta}(\Z^2)$ of linear lattice isomorphisms leaving $\Delta$ invariant fixes only the origin. According to \cite{Nill_reductive}, Remark 5.5, the polar dual of a symmetric reflexive polytope is also symmetric, which implies that its barycenter must vanish. So, the barycenter of $\nabla$ must vanish. Naturally, this can be readily checked by hand in this case. By \cite{Zhu_orbisolitons}, Theorem 1.4, Remark 1.5 and Proposition 3.2, it follows that $T_{\nabla}$ carries a \emph{unique orbifold K\"ahler-Einstein metric} $\omega_{\nabla}$ (modulo automorphisms).  

\subsection{Second step: resolving the base locus} 
Recall that on $T_{\nabla}$ we have a pencil $\fd_{\Delta}$ which extends the Laurent polynomial $f_{\PP^2, H}$ to a rational map $T_{\nabla} \dashrightarrow \PP^1$. By our previous discussion, the base locus of $\fd_{\Delta}$ is contained in the toric boundary and is cut out by the equations in $\PP[x_0:x_1:x_2:x_3]$
\begin{equation*} 
\begin{dcases}
x_1 x_2 x_3 = 0\\ 
x_1 + x_2 + e^{-a_0} x_3 = 0
\end{dcases}
\end{equation*} 
and so we have
\begin{equation*}
\Bs(\fd_{\Delta}) = \{p_1, p_2, p_3\} := \{[0:0:1:-e^{ a_0}],\,[0:1:0:-e^{ a_0}],\,[0:1:-1:0]\}.
\end{equation*}
In particular, $\Bs(\fd_{\Delta})$ is disjoint from the torus fixed (and singular) points of $T_{\nabla}$. Set $Z' = \Bl_{\Bs( \fd_{\Delta})} T_{\nabla}$. Then, the pencil $\fd_{\Delta}$ induces a regular morphism $f\!: Z' \to \PP^1$. 

We would like to perform this step \emph{metrically}, i.e. to endow the blowup $p\!: Z'\to T_{\nabla}$ with \emph{natural, canonical metrics $\omega_{Z', \eps}$ obtained from the orbifold K\"ahler-Einstein metric $\omega_{\nabla}$}. The problem of blowing up configurations of smooth points, in a metric way, in the presence of continuous automorphisms, is very well studied, see e.g. \cite{ArezzoPacardSinger, Gabor_blowup}.

Let $\mu\!: T_{\nabla} \to \fk^{\vee}$ denote a moment map for the action of the group $K$ of Hamiltonian isometries of $(T_{\nabla}, \omega_{\nabla})$, with Lie algebra $\fk$, normalised by $\int \langle \mu, v\rangle \omega^2_{\nabla} = 0$ for all $v \in \fk$. 
\begin{rmk} The extension of K\"ahler geometry to orbifolds (such as $T_{\nabla}$) is discussed in several references including \cite{DingTian}, Section 1, \cite{Legendre_localised}, Section 3, \cite{RossThomasOrbi}, Section 1 and \cite{Zhu_orbisolitons}, Section 2.
\end{rmk}
By a special case of \cite{Gabor_blowup}, Theorem 1, if the \emph{balancing condition}
\begin{equation*}
\sum^3_{i = 1} \mu(p_1) = 0
\end{equation*} 
holds, then each K\"ahler class  
\begin{equation*}
p^*[\omega_{\nabla}] -\eps^2 \sum_{i = 1} [E_i],\, \eps \in (0, \bar{\eps})\text{ for some } \bar{\eps} > 0,
\end{equation*}
where $E_i \subset Z'$ denote the exceptional divisors of $p$, admits an extremal metric $\omega_{Z', \eps}$. But since the reduced automorphism group (i.e. the connected component of the identity in the group of Hamiltonian automorphisms) of $Z'$ is trivial (since $p\!: Z' \to T_{\nabla}$ blows up a point in each irreducible component of the toric boundary of $T_{\nabla}$, and there are no nontrivial holomorphic vector fields on $T_{\nabla}$ vanishing at all these blowup centres), it follows that this extremal metric is in fact the \emph{unique cscK metric $\omega_{Z',\eps}$ representing its K\"ahler class}. 
\begin{rmk} Strictly speaking, the results of \cite{Gabor_blowup} are stated for points on a manifold, not smooth points on an orbifold. However, by analysing the proofs, one can check that the results still hold in this case, and that in fact $\bar{\eps} > 0$ can be chosen uniformly with respect to $\Bs(\fd_{\Delta})$ as long as it is uniformly bounded away from the singular locus. By construction, this is the case when $-K_X + H = (3 + a_0) l$ lies in a bounded neighbourhood of $-K_X$ in the K\"ahler cone of $X$.   
\end{rmk}
More generally, suppose there exists $g \in G = K^{\C}$ such that the balancing condition holds for the set $g \cdot \Bs(\fd_{\Delta})$, namely we have
\begin{equation*}
\sum^3_{i = 1} \mu(g\cdot p_1) = 0.
\end{equation*} 
Then, the blowup at $g \cdot \Bs(\fd_{\Delta})$ is cscK in the relevant classes. But since the two blowups are isomorphic, we see that $Z'$ is also cscK in the classes above.

Let us check the balancing condition in our case. We give two different approaches.
\subsubsection{Global quotient}
The orbifold $T_{\nabla}$ can be presented as a global quotient $T_{\nabla} = \PP^2/(\Z/(3))$, where the action of $\Z/(3)$ is by 
\begin{equation*}
\zeta\cdot[z_0:z_1:z_2] = [z_0: \zeta z_1:\zeta^2 z_2],
\end{equation*}
for $\zeta$ a primitive root of unity. The above embedding of $T_{\nabla}$ into $\PP[x_0:x_1:x_2:x_3]$ is induced by the $\Z/(3)$-invariant morphism
\begin{equation*}
[z_0: z_1: z_2] \mapsto [z_0 z_1 z_2: z^3_0:z^3_1:z^3_2].
\end{equation*}
Under this identification we have 
\begin{equation*}
\Bs(\fd_{\Delta}) = \{p_1, p_2, p_3\} := \{[0:1: \xi e^{a_0/3}],\,[1:0: \xi e^{a_0/3}],\,[1:\xi:0 ]\},\,\xi^3 = -1.
\end{equation*}  
The group $G = K^{\C}$ is given by the structure torus acting on $T_{\nabla}$ as
\begin{equation*}
(\C^*)^2 \ni g = (\lambda, \nu )\cdot[z_0:z_1:z_2] = [\lambda z_0 : \lambda^{-1} \nu z_1:\nu^{-1} z_2].
\end{equation*}
The orbifold K\"ahler-Einstein metric $\omega_{\nabla}$ on $T_{\nabla}$ is induced from the Fubini-Study metric on $\PP^2$. This makes it possible to compute the moment map $\mu$ with respect to $\omega_{\nabla}$ explicitly, as in \cite{Arezzo_kummer}, Section 7.4. Set
\begin{align*}
& \phi_1 = \frac{1}{3} - \frac{|z_0|^2}{|z_0|^2+|z_1|^2+|z_2|^2},\,\phi_2 = \frac{1}{3} - \frac{|z_2|^2}{|z_0|^2+|z_1|^2+|z_2|^2},\\
& \varphi_1 = -3(\phi_1+2\phi_2),\,\varphi_2 = -3(2\phi_1+\phi_2).
\end{align*}
Then, we have
\begin{equation*}
\mu(g\cdot p_j) = (\varphi_1(g\cdot p_j), \varphi_2(g\cdot p_j))^{T},
\end{equation*}
so the balancing condition $\sum^3_{j=1}\mu(g\cdot p_j) = 0$ holds iff the kernel of the matrix $[\varphi_i(g\cdot p_j)]$ contains the vector $(1,1,1)^{T}$. We compute explicitly the components
\begin{align*}
&([\varphi_i(g\cdot p_j)](1,1,1)^{T})_1\\
& = \frac{3 \left(e^{2 a_0 /3} \nu ^2 \left(|\lambda|^8-|\nu|^4\right)+e^{4 a_0 /3} |\lambda |^2 \left(2 |\lambda|^4+|\nu|
   ^2\right)-|\lambda|^2 |\nu|^6 \left(|\lambda|^4+2|\nu|^2\right)\right)}{\left(|\lambda|^4+|\nu|^2\right) \left(e^{2 a_0
   /3}+|\lambda|^2 |\nu|^2\right) \left(e^{2 a_0 /3} |\lambda|^2+|\nu|^4\right)},\\
&([\varphi_i(g\cdot p_j)](1,1,1)^{T})_2 = \frac{3 \left(|\lambda|^4-|\nu|^2\right) \left(2 e^{2 a_0 /3} |\nu|^2 \left(|\lambda|^4+|\nu| ^2\right)+e^{4 a_0 /3} |\lambda|
   ^2+|\lambda|^2 |\nu|^6\right)}{\left(|\lambda|^4+|\nu|^2\right) \left(e^{2 a_0 /3}+|\lambda|^2 |\nu|^2\right) \left(e^{2
   a_0 /3} |\lambda|^2+|\nu|^4\right)}.    
\end{align*}
Thus, setting $\nu = \lambda^2$, that is restricting to the $1$-parameter subgroup 
\begin{equation*}
 \lambda \cdot[z_0:z_1:z_2] = [\lambda z_0 : \lambda z_1: \lambda^{-2} z_2],
\end{equation*}
we obtain 
\begin{equation*}
[\varphi_i(p_j)](1,1,1)^{T} = \left(\frac{9 \left(e^{2 a_0 /3}-|\lambda|^6\right)}{2 \left(e^{2 a_0 /3}+|\lambda| ^6\right)},0\right),
\end{equation*}
from which we see that the balancing condition can always be achieved by choosing
\begin{equation*}
|\lambda| = e^{a_0 / 9},\,|\nu| = e^{2 a_0 / 9}.
\end{equation*}
\begin{rmk} Note in particular that for $H = 0$, i.e. for the polarisation $-K_X$, the base locus $\Bs(\fd_{\Delta})$ is already balanced. 
\end{rmk}
We have thus established
\begin{lemma} Let $p\!: Z'\to T_{\nabla}$ be the blowup of the base locus $\Bs(\fd_{\Delta})$. Then, each K\"ahler class  
\begin{equation*}
p^*[\omega_{\nabla}] -\eps^2 \sum_{i = 1} [E_i],\, \eps \in (0, \bar{\eps})\text{ for some } \bar{\eps} > 0,
\end{equation*}
where $E_i \subset Z'$ denote the exceptional divisors of $p$, admits a (unique) orbifold cscK metric $\omega_{Z', \eps}$. In particular, the pair $(Z', [\omega_{Z', \eps}])$ is $K$-stable. Moreover, $\bar{\eps} > 0$ can be chosen uniformly when $-K_X + H$ lies in a bounded neighbourhood of $-K_X$ in the K\"ahler cone.
\end{lemma}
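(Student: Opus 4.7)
The plan is to apply the Sz\'ekelyhidi blowup theorem \cite{Gabor_blowup} to the orbifold K\"ahler--Einstein base $(T_{\nabla}, \omega_{\nabla})$ at the configuration $g\cdot \Bs(\fd_{\Delta})$, where $g \in G = K^{\C}$ is the element constructed above that realises the balancing condition; since the blowup of $T_{\nabla}$ at $g\cdot \Bs(\fd_{\Delta})$ is biholomorphic to $Z'$, this produces extremal metrics on $Z'$ in the required classes $p^*[\omega_{\nabla}] - \eps^2 \sum_i [E_i]$. A preliminary step is to verify that \cite{Gabor_blowup}, although stated for smooth configurations on manifolds, goes through verbatim when the blown-up points lie uniformly inside the smooth locus of the orbifold $T_{\nabla}$: the weighted-analysis setup is purely local near the points being blown up, and away from them the (fixed) orbifold singular locus is irrelevant to the gluing analysis. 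I would then invoke triviality of the reduced automorphism group of $Z'$ (as observed above, since $p$ blows up one point in each irreducible component of the toric boundary) to upgrade every extremal representative of the class to a cscK one. Uniqueness in its K\"ahler class then follows from \cite{BermanDarvasLu_weakminimizers} (or directly from \cite{Donaldson_embeddings} in the discrete-automorphism case).

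For the $K$-stability claim I would apply \cite{Stoppa_cscK_stabili, Donaldson_lower_bounds}: given that $(Z', [\omega_{Z',\eps}])$ admits a cscK representative and has discrete $\Aut(Z')$, $K$-stability of the polarisation is automatic.

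The main obstacle, and the step I would spend the most care on, is the uniformity of $\bar{\eps}$ as $H = a_0 l$ varies in a bounded neighbourhood of $0$. I would trace through the implicit-function-theorem argument underlying \cite{Gabor_blowup}, Theorem 1, and extract the explicit dependence of $\bar{\eps}$ on the background data. Concretely, $\bar{\eps}$ is controlled by the injectivity radius and curvature bounds of $\omega_{\nabla}$, the minimum pairwise distance between the blown-up points, and the minimum distance from those points to the orbifold singular locus of $T_{\nabla}$. As $a_0$ ranges over a bounded interval, the balancing parameters $|\lambda| = e^{a_0/9}$, $|\nu| = e^{2a_0/9}$ remain in a compact subset of $G$, so the translated configuration $g\cdot \Bs(\fd_{\Delta})$ stays in a compact subset of the smooth locus of $T_{\nabla}$ with uniformly bounded pairwise separations. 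This yields a choice of $\bar{\eps} > 0$ independent of $a_0$ in the given range, as required.
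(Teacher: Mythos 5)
Your proposal is correct and follows essentially the same route as the paper: apply Sz\'ekelyhidi's blowup theorem at the balanced configuration $g\cdot\Bs(\fd_{\Delta})$ (with $|\lambda|=e^{a_0/9}$, $|\nu|=e^{2a_0/9}$), transport the resulting extremal metrics to $Z'$ via the induced biholomorphism, upgrade to cscK by triviality of the reduced automorphism group, and obtain uniformity of $\bar{\eps}$ from the fact that the balanced configuration stays in a compact subset of the smooth locus with uniformly separated points as $a_0$ varies in a bounded set. The only cosmetic difference is that the paper also records an alternative verification of the balancing/polystability condition via the Kempf--Ness theorem and the Hilbert--Mumford criterion, which you do not need.
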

\subsubsection{Kempf-Ness Theorem and Hilbert-Mumford Criterion}
By the Kempf-Ness Theorem, the vanishing moment map condition is equivalent to the polystability of the $G$-orbit of the base locus $\Bs(\fd_{\Delta})$, regarded as a cycle, definining a point in the relevant Chow variety (see \cite{Mumford_GIT}, Theorem 8.3 for a statement of the Kempf-Ness Theorem for arbitrary K\"ahler forms representing a rational polarisation). By the Hilbert-Mumford Criterion, the latter can be checked using one-parameter subgroups (1PS) of $G$. The Hilbert-Mumford weight for the cycle is simply the sum of punctual Hilbert-Mumford weights (for more details see e.g. \cite{Stoppa_JAG}).

In our case, a 1PS of $G \cong (\C^*)^2$ is obtained by specialising $\lambda = t^a$, $\mu = t^b$, acting by
\begin{align*}
&t\cdot\Bs(\fd_{\Delta}) = t\cdot\{p_1,\,p_2,\,p_3\} \\
&= \{[0:0:t^{3(-a+b)}:- t^{-3b}e^{ a_0}],\,[0:t^{3a}:0:-t^{-3b}e^{ a_0}],\,[0:t^{3a}:-t^{3(-a+b)}:0]\} 
\end{align*} 
The computation of limit cycles as $t \to 0$ and Chow weights in the (nontrivial) chambers can be summarised as follows (we use the convention that stable points have negative Chow weights):
{\tiny
\begin{align*}
\begin{matrix*}[l]
&a > -b,\, -a + b > -b,\,a>-a+b: &a > -b,\, -a + b > -b,\,a < -a+b:\\
&p'_1 = [0:0:0:1],\,p'_2=[0:0:0:1],\,p'_3 = [0: 0: 1:0]&p'_1 = [0:0:0:1],\,p'_2=[0:0:0:1],\,p'_3 = [0: 1:0:0]\\
&\Ch = -3(a+b) < 0.&\Ch = 3(a - 2b) < 0.
\end{matrix*}
\end{align*}
\begin{align*}
\begin{matrix*}[l]
&a > -b,\, -a + b < -b,\,a > -a+b:&a < -b,\, -a + b > -b,\,a < -a+b:  \\
&p'_1 = [0:0:1:0],\,p'_2=[0:0:0:1],\,p'_3 = [0: 0:1:0]&p'_1 = [0:0:0:1],\,p'_2=[0:1:0:0],\,p'_3 = [0: 1:0:0]\\
&\Ch = 3(-2a + b) < 0.&\Ch =   3(2a-b) < 0.
\end{matrix*}
\end{align*}
\begin{align*}
\begin{matrix*}[l]
&a < -b,\, -a + b < -b,\,a > -a+b:&a < -b,\, -a + b < -b,\,a < -a+b: \\
&p'_1 = [0:0:1:0],\,p'_2=[0:1:0:0],\,p'_3 = [0: 0:1:0]&p'_1 = [0:0:1:0],\,p'_2=[0:1:0:0],\,p'_3 = [0: 1:0:0]\\
&\Ch = 3(-a +2b) < 0.&\Ch = 3(a+b) < 0.
\end{matrix*}
\end{align*}
}
\subsection{Third step: crepant resolution}
The final step in the canonical construction consists in passing to a crepant resolution of $Z'$. In dimension $2$, this is given simply by blowing up the orbifold points twice. Each singular point is replaced by two $(-2)$ curves. The pencil $\fd_{\Delta}$ induces a regular morphism $f\!: Z \to \PP^1$, which is a rational elliptic surface and so a log Calabi-Yau compactified toric LG model for $\PP^2$ (see e.g. \cite{Przyjalkowski_CYLG}, Section 3 or \cite{Petracci_reflexive}, Proposition 4.8). 

We would like to also perform this step \emph{metrically}, i.e. to endow the crepant resolution $\pi\!: Z \to Z'$ with \emph{natural, canonical metrics $\omega_{Z, \eps, \delta}$ obtained from the orbifold KE metric $\omega_{Z', \eps}$ on $Z'$}.

Since $Z'$ has discrete automorphisms, we may apply the general result \cite{ArezzoPacard}, Theorem 1.3. This shows that the crepant resolution $Z$ admits a unique cscK metric in each K\"ahler class
\begin{equation*}
[\omega_{Z, \eps, \delta}] = \pi^*[\omega_{Z', \eps}] + \delta^2 \sum^3_{i=1}  [\tilde{\eta}_i],\, \delta \in (0, \bar{\delta})\text{ for some } \bar{\delta} > 0,
\end{equation*}  
where $[\tilde{\eta}_i]$, for $i=1, 2, 3$, is the image in $Z$ of the K\"ahler class $[\eta]$ of an \emph{ALE Ricci flat resolution of $\C^2/(\Z/3)$ with vanishing ADM mass}. 
\begin{cor} Let $\pi\!: Z \to Z'$ be the crepant resolution. Then, each K\"ahler class  
\begin{equation*}
[\omega_{Z, \eps, \delta}] = \pi^*[\omega_{Z', \eps}] + \delta^2 \sum^3_{i=1}  [\tilde{\eta}_i],\, \delta \in (0, \bar{\delta})\emph{ for some } \bar{\delta} > 0,
\end{equation*}
admits a (unique) metric $\omega_{Z, \eps, \delta}$. In particular, the pair $(Z, [\omega_{Z, \eps, \delta}])$ is $K$-stable. Moreover, the parameters $\eps, \delta$ can be chosen uniformly when $-K_X + H$ lies in a bounded neighbourhood of $-K_X$ in the complexified K\"ahler cone. 

Thus, each admissible choice of parameters $\eps, \delta$ gives a well-defined holomorphic map from such a bounded neighbourhood to a separated moduli space $\mathfrak{M}$ containing cscK (hence $K$-stable) compactified Landau-Ginzburg models for $X$. 
\end{cor}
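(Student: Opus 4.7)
The plan is to combine the previous Lemma (giving uniform cscK metrics $\omega_{Z',\eps}$ on $Z'$ for $-K_X+H$ in a bounded neighbourhood of $-K_X$) with the Arezzo-Pacard resolution theorem \cite{ArezzoPacard}, Theorem 1.3. The crucial point is that $Z'$ has discrete automorphism group, since the three blown-up base points each lie in a distinct irreducible component of the toric boundary of $T_{\nabla}$, so the orbifold cscK metric $\omega_{Z',\eps}$ is unobstructed as a seed for the resolution construction. The orbifold singularities of $Z'$ are exactly those inherited from the three torus-fixed points of $T_{\nabla}$, all of type $\C^2/(\Z/3)$, and Kronheimer's classification supplies ALE Ricci-flat resolutions with vanishing ADM mass whose Kähler class $[\eta]$ is the required model datum. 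The direct application of Arezzo-Pacard then produces, for each sufficiently small $\delta>0$, a cscK metric $\omega_{Z,\eps,\delta}$ in the class
\begin{equation*}
[\omega_{Z,\eps,\delta}]=\pi^*[\omega_{Z',\eps}]+\delta^2\sum_{i=1}^3[\tilde{\eta}_i].
\end{equation*}
Uniqueness in this class follows from \cite{Donaldson_embeddings,BermanDarvasLu_weakminimizers} using that the automorphism group of $Z$ is discrete, and $K$-stability of $(Z,[\omega_{Z,\eps,\delta}])$ then follows from \cite{Stoppa_cscK_stabili} (or from the discussion in the introduction).

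Next I would establish the uniformity of $\bar\eps$ and $\bar\delta$ as $-K_X+H$ varies in a bounded neighbourhood of $-K_X$ in $\Ka_{\C}(X)$. For $\bar\eps$ this is already provided by the previous Lemma. For $\bar\delta$ one has to inspect the constants appearing in the proof of \cite{ArezzoPacard}, Theorem 1.3: the linearised cscK operator on $Z'$, its right inverse, and the weighted Schauder estimates at the orbifold points depend only on $(Z',\omega_{Z',\eps})$, and all these quantities vary continuously with the underlying orbifold cscK datum. Since the $\omega_{Z',\eps}$ themselves vary in a uniformly controlled family (the base points $\Bs(\fd_\Delta)$ stay uniformly bounded away from the singular locus when $-K_X+H$ ranges over a bounded neighbourhood of $-K_X$, and the moment-map balancing is achieved by the explicit $1$-parameter subgroup $|\lambda|=e^{a_0/9},\,|\nu|=e^{2a_0/9}$ computed above), the implicit function argument of Arezzo-Pacard can be carried out with a uniform choice of $\bar\delta>0$.

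The final step is to promote this family of cscK metrics to a holomorphic map into a separated moduli space. Fix admissible parameters $(\eps,\delta)$ and let $[\eta]=[\omega_{Z,\eps,\delta}]$, regarded as a fixed class on the underlying smooth $4$-manifold of $Z$. As $-K_X+H$ varies in the bounded neighbourhood of $-K_X$, the complex structure on $Z$ varies holomorphically (by construction of $Z$ via explicit blowups of $T_{\nabla}$), and each such deformation carries a cscK representative of $[\eta]$. By the Fujiki-Schumacher Theorem recalled in the introduction, the set of compact Kähler manifolds with discrete automorphisms carrying a cscK metric in a fixed cohomology class forms a separated coarse moduli space $\mathfrak{M}$, and the classifying map from a holomorphic family of such manifolds into $\mathfrak{M}$ is automatically holomorphic. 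This yields the desired map $\Theta$ from the bounded neighbourhood of $-K_X$ to $\mathfrak{M}$.

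The main obstacle I would expect is the uniformity of $\bar\delta$ in the Arezzo-Pacard construction: although it is morally clear from continuity of the constants, verifying it rigorously requires going into the proof of \cite{ArezzoPacard}, Theorem 1.3 and tracking the dependence of the linear and nonlinear estimates on the background orbifold cscK metric, in parallel to what has already been done for $\bar\eps$ in the previous subsection.
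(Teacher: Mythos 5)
Your proposal is correct and follows essentially the same route as the paper: the paper likewise observes that $Z'$ has discrete automorphisms, applies \cite{ArezzoPacard}, Theorem 1.3 with the ALE Ricci-flat resolutions of the three $\C^2/(\Z/3)$ points as model data, and obtains uniformity of $\eps,\delta$ from the analytic estimates once the base locus is bounded away from the singular points, with Fujiki--Schumacher supplying the separated moduli space. The extra details you supply (uniqueness via \cite{Donaldson_embeddings, BermanDarvasLu_weakminimizers}, $K$-stability, and tracking the constants for a uniform $\bar\delta$) are exactly the points the paper delegates to the surrounding remarks.
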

\section{$X = T_{P_{6a}} = S_6$}\label{Deg6Sec}
The degree $6$ del Pezzo $X = S_6$ is the unique smooth toric K\"ahler-Einstein del Pezzo surface. It is isomorphic to the blowup of $\PP^2$ in three points in general position. We denote the exceptional divisors by $E_i$. We can choose a toric presentation such that the corresponding reflexive (smooth) polygon is given by $\Delta = P_{6a}$ in \cite{Petracci_reflexive}, Fig. 2. This in also the unique smooth del Pezzo polygon such that the polar dual $\nabla = (P_{6a})^{\circ}$ is equivalent to $P_{6a}$ up to a $GL(2, \Z)$ transformation, so we have $T_{\nabla} \cong S_6$. 

According to \cite{Przyjalkowski_CYLG}, Section 2, Facts 1.-2., $T_{\nabla}$ is anticanonically embedded in a projective space $\PP^6$ with homogeneous coordinates $x_i$, $i = 0,\cdots, 6$ corresponding to the integral points $c_i$ of $\Delta$, and with homogeneous ideal cut out by the equations
\begin{equation*}
\prod_i x^{\alpha_i}_i = \prod_i x^{\beta_i}_i  
\end{equation*}    
induced by the monoid relations
\begin{equation*}
\sum_i  \alpha_i c_i = \sum_j \beta_j  c_j,\,\alpha_i,\beta_j > 0.  
\end{equation*}    
Setting $c_0 = 0$, one checks that these relations are generated by
\begin{equation*}
c_0 + c_i = c_{i-1} + c_{i+1},\,i=1,\cdots,6\text{ (cyclic)};\, c_i + c_{i + 3} = 2 c_0,\,i=1,\cdots, 3,
\end{equation*}
and so $T_{\nabla} \subset \PP^6$ is cut out by the corresponding quadratic equations
\begin{equation}\label{quadEqsP6}
x_{i-1} x_{i+1} = x_0 x_i,\,i=1,\cdots,6 \text{ (cyclic)};\,x_i x_{i+3} = x^2_0,\,i=1,\cdots, 3.
\end{equation}
By \cite{Przyjalkowski_CYLG}, Section 2, Fact 4, or \cite{Petracci_reflexive}, Construction 4.1, the toric boundary of $T_{\nabla}$ is cut out by the further equation $x_0 = 0$.

Choose an effective divisor
\begin{equation*}
H = a_0 l + a_1 E_1 + a_2 E_2 + a_3 E_3.
\end{equation*}
A toric LG potential $f_{S_6, H}$ for $T_{\Delta} = S_6$ is given by Givental's construction \cite{Givental_toric}. We follow the inductive presentation given in \cite{Przyjalkowski_CYLG}, Section 3 (this is convenient for later purposes). So, we start from the potential for $S_7$, the blowup of $\PP^2$ in two distinct points, with divisor
\begin{equation*}
H' = a_0 l + a_1 E_1 + a_2 E_2.
\end{equation*} 
According to \cite{Przyjalkowski_CYLG}, Example 24, this can be presented as
\begin{equation*}
f_{S_7, H'} = x + y + e^{-a_0} \frac{1}{x y} + e^{-a_0-a_1} \frac{1}{ y} + e^{-a_2} x y.
\end{equation*} 
Applying the general inductive relation in \cite{Przyjalkowski_CYLG}, Section 3, Construction 16 we thus obtain
\begin{align}\label{fS6}
\nonumber f_{S_6, H} &= f_{S_7, H'} + e^{-a_0} e^{-a_3}\frac{1}{x}\\
&=x + y + e^{-a_0} \frac{1}{x y} + e^{-a_0 -a_3}\frac{1}{x} + e^{-a_0-a_1} \frac{1}{ y} + e^{-a_2} x y.
\end{align}  
We see that the Laurent polynomial $f_{S_6, H}$ is the restriction to the structure torus of $T_{\nabla}$ of the birational map $T_{\nabla} \dashrightarrow \PP^1$ defined by the pencil
\begin{equation*}
\fd_{\Delta} := |x_1 + x_3 + e^{-a_0} x_5 + e^{-a_0 -a_3} x_4 + e^{-a_0-a_1} x_6 + e^{-a_2} x_2, x_0|,
\end{equation*}    
while the anticanonical embedding of the structure torus is given by
\begin{equation*}
(x, y) \mapsto [1: x : xy: y:  \frac{1}{x}: \frac{1}{xy}:\frac{1}{y}].
\end{equation*}

Using the quadratic equations, we check that the toric boundary components are smooth rational curves presented as 
\begin{equation*}
C_i = \{x_j = 0, j \neq i, i+1\},\,i = 1,\ldots, 6;\,C_i \cong \PP[x_i:x_{i+1}] 
\end{equation*}
(alternatively this follows from \cite{Przyjalkowski_CYLG}, Section 2, Fact 3). Thus, each boundary component contains precisely one base point $p_i \in \Bs(\fd_{\Delta})$, and we compute explicitly
\begin{align*}
&p_1 = [0:1: -e^{a_2}:0:0:0:0],\,p_2 = [0: 0: 1: -e^{-a_2} : 0 : 0 : 0],\\ 
&p_3 = [0: 0 : 0: 1 : -e^{a_0+a_3} : 0 :0],\,p_4 = [0: 0: 0: 0: 1:-e^{-a_3}:0],\\
&p_5 = [0: 0: 0: 0 : 0 : 1 : -e^{a_1}],\,p_6 = [0: 1 : 0: 0 : 0 : 0 : -e^{a_1}].
\end{align*}
\subsection{Kempf-Ness Theorem}\label{Deg6KempfNessSec}
A maximal compact subgroup $K \subset \Aut_0(T_{\nabla})$, containing the compact structure torus, is precisely the structure torus. Its complexification $K^{\C} \cong (\C^*)^2$ acts on the dense open orbit as
\begin{equation*}
(\lambda, \nu) \cdot (x, y) = (\lambda x, \nu y) \mapsto [1: \lambda x :  \lambda \nu xy: \nu y:  \lambda^{-1}\frac{1}{x}: \lambda^{-1}\nu^{-1}\frac{1}{xy}:\nu^{-1}\frac{1}{y}]. 
\end{equation*}
This action is induced by the embedding $(\C^*)^2 \hookrightarrow SL(7, \C)$ given by
\begin{equation*}
\operatorname{diag}(1, \lambda,  \lambda \nu , \nu , \lambda^{-1}, \lambda^{-1}\nu^{-1}, \nu^{-1} ). 
\end{equation*}
A general $1$PS $\alpha \hookrightarrow (\C^*)^2$ is obtained by setting $\lambda = t^{a}$, $\nu = t^b$, acting in the representation above as
\begin{equation*}
\alpha = \operatorname{diag}(1, t^a,  t^{a+b} , t^b , t^{-a}, t^{-a-b}, t^{-b} ). 
\end{equation*}
Thus, the induced action on the base locus is given by
\begin{align*}
&t\cdot p_1 = [0:t^a: -e^{a_2}t^{a+b}:0:0:0:0],\,t\cdot p_2 = [0: 0: t^{a+b}: -e^{-a_2}t^{b} : 0 : 0 : 0],\\ 
&t\cdot p_3 = [0: 0 : 0: t^{b} : -e^{a_0+a_3} t^{-a}: 0 :0],\,t\cdot p_4 = [0: 0: 0: 0: t^{-a}:-e^{-a_3}t^{-a-b}:0],\\
&t\cdot p_5 = [0: 0: 0: 0 : 0 : t^{-a-b} : -e^{a_1}t^{-b}],\,t\cdot p_6 = [0: t^a : 0: 0 : 0 : 0 : -e^{a_1}t^{-b}],
\end{align*}
or equivalently
\begin{align*}
&t\cdot p_1 = [0: 1: -e^{a_2}t^{b}:0:0:0:0],\,t\cdot p_2 = [0: 0: t^{a}: -e^{-a_2} : 0 : 0 : 0],\\ 
&t\cdot p_3 = [0: 0 : 0: t^{a+b} : -e^{a_0+a_3}: 0 :0],\,t\cdot p_4 = [0: 0: 0: 0: t^b:-e^{-a_3} :0],\\
&t\cdot p_5 = [0: 0: 0: 0 : 0 : 1 : -e^{a_1} t^a],\,t\cdot p_6 = [0: t^{a+b} : 0: 0 : 0 : 0 : -e^{a_1}].
\end{align*}
\begin{lemma}\label{Deg6KempfNess} The $K^{\C}$-orbit of $\Bs(\fd_{\Delta})$ in the Chow variety is stable.
\end{lemma}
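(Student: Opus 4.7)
The plan is to invoke the Kempf-Ness Theorem and the Hilbert-Mumford Criterion, exactly as at the end of Section~\ref{PP2Sec}. Since $\Aut_0(T_{\nabla})$ is reductive with maximal compact subgroup $K$ equal to the structure torus, it suffices to show that the Chow weight of the cycle $\Bs(\fd_{\Delta})$, under every nontrivial $1$PS of $K^{\C}$, is strictly negative.

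Fix such a $1$PS specified by $\lambda = t^{a}$, $\nu = t^{b}$ with $(a,b) \in \Z^{2} \setminus \{(0,0)\}$. The corresponding weights on the homogeneous coordinates $(x_{0}, x_{1},\ldots, x_{6})$ of $\PP^{6}$ are $(0, a, a+b, b, -a, -a-b, -b)$. From the explicit form of the base points displayed above, each $p_{i}$ has exactly two nonzero coordinates, and these correspond to cyclically consecutive entries in the sequence
\begin{equation*}
(w_{1},\ldots,w_{6}) := (a, a+b, b, -a, -a-b, -b).
\end{equation*}
With the sign convention that stable points have negative Chow weight, the punctual Chow weight at $p_{i}$ is $\min(w_{i}, w_{i+1})$, and the total Chow weight of the cycle is $\Ch = \sum_{i=1}^{6}\min(w_{i}, w_{i+1})$, with indices modulo $6$.

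Using the elementary identity $\min(u,v) = \tfrac{1}{2}(u+v - |u-v|)$ together with the antipodal symmetry $w_{i+3} = -w_{i}$ (which forces $\sum_{i} w_{i} = 0$), a direct calculation of the six consecutive differences shows that the multiset $\{|w_{i}-w_{i+1}|\}_{i=1}^{6}$ equals $\{|a|, |a|, |b|, |b|, |a+b|, |a+b|\}$, and therefore
\begin{equation*}
\Ch = -\tfrac{1}{2} \sum_{i=1}^{6} |w_{i} - w_{i+1}| = -(|a| + |b| + |a+b|),
\end{equation*}
which is strictly negative for every $(a,b) \neq (0,0)$, establishing Chow stability.

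I do not anticipate any real obstacle in carrying this out: the full hexagonal cyclic symmetry of the $S_{6}$ configuration sidesteps the chamber-by-chamber analysis required in the $\PP^{2}$ case, reducing the matter to an elementary identity. The only step demanding some care is verifying that the pairs of nonzero coordinates of the $p_{i}$ are indeed cyclically consecutive in the weight sequence, which is immediate from the description $C_{i} = \{x_{j} = 0 : j \neq i, i+1\}$ of the toric boundary components recorded above.
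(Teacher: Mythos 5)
Your proof is correct, and it follows the same underlying method as the paper — the Hilbert--Mumford criterion applied to the cycle $\Bs(\fd_{\Delta})$ in the Chow variety, with the weight computed as the sum of punctual weights $\min\{w_j : (p_i)_j \neq 0\}$ — but you execute it uniformly rather than chamber by chamber. Your identification of the nonzero coordinates of the $p_i$ with cyclically consecutive entries of $(a, a+b, b, -a, -a-b, -b)$ is right, the identity $\min(u,v)=\tfrac12(u+v-|u-v|)$ together with $\sum_i w_i = 0$ does yield
\begin{equation*}
\Ch = -\tfrac12\bigl(2|a|+2|b|+2|a+b|\bigr) = -(|a|+|b|+|a+b|) < 0 \quad \text{for } (a,b)\neq(0,0),
\end{equation*}
and I have checked that this closed form reproduces the limit-cycle computations in the paper's tables. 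What your formula buys, beyond brevity, is robustness: it in fact exposes two harmless arithmetic slips in the paper's tables (in the chamber $a,b<0$ the weight is $2(a+b)$ rather than $2b$, and in the chamber $a<0$, $b>0$, $a+b>0$ it is $-2b$ rather than $-b$; both have the correct sign, so the lemma is unaffected). The paper's case-by-case presentation, on the other hand, records the explicit limit cycles $p_i'$, which is the template reused later (e.g.\ in the weighted-cycle computation for $P_{4b}$ and in Section \ref{C1StrataSec}), so the two presentations are complementary. The only point you flagged as needing care — that the nonzero coordinates are cyclically consecutive — is indeed immediate from $p_i \in C_i = \PP[x_i : x_{i+1}]$, so there is no gap.
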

\begin{proof}
We apply the Hilbert-Mumford Criterion in the Chow variety. The computation of limit cycles as $t \to 0$ and Chow weights is summarised in the following tables.  
{\tiny
\begin{align*}
\begin{matrix*}[l]
&a, b > 0: & a, b < 0:\\
&p'_1 = [0: 1: 0:0:0:0:0],\, p'_2 = [0: 0: 0: 1 : 0 : 0 : 0], &p'_1 = [0: 0: 1:0:0:0:0],\, p'_2 = [0: 0: 1: 0 : 0 : 0 : 0],\\ 
&p'_3 = [0: 0 : 0: 0 : 1: 0 :0],\, p'_4 = [0: 0: 0: 0: 0:1 :0], &p'_3 = [0: 0 : 0: 1 : 0: 0 :0],\, p'_4 = [0: 0: 0: 0: 1:0 :0],\\
&p'_5 = [0: 0: 0: 0 : 0 : 1 : 0],\, p'_6 = [0: 0: 0: 0 : 0 : 0 : 1],&p'_5 = [0: 0: 0: 0 : 0 : 0: 1],\, p'_6 = [0: 1 : 0: 0 : 0 : 0 : 0],\\
&\Ch = -2(a+b) < 0.&\Ch = 2b < 0.
\end{matrix*}
\end{align*}
\begin{align*}
\begin{matrix*}[l]
& a > 0, b < 0, a+b > 0: & a > 0, b < 0, a+b < 0:\\
& p'_1 = [0: 0: 1:0:0:0:0],\, p'_2 = [0: 0: 0: 1 : 0 : 0 : 0],& p'_1 = [0: 0: 1:0:0:0:0],\, p'_2 = [0: 0: 0: 1 : 0 : 0 : 0],\\ 
& p'_3 = [0: 0 : 0: 0: 1: 0 :0],\, p'_4 = [0: 0: 0: 0: 1:0 :0], & p'_3 = [0: 0 : 0: 1 : 0: 0 :0],\, p'_4 = [0: 0: 0: 0: 1:0: 0],\\
& p'_5 = [0: 0: 0: 0 : 0 : 1 : 0],\,  p'_6 = [0: 0 : 0: 0 : 0 : 0 : 1],& p'_5 = [0: 0: 0: 0 : 0 : 1 : 0],\,  p'_6 = [0: 1 : 0: 0 : 0 : 0 : 0],\\
&\Ch = -2a < 0.&\Ch = 2b<0.
\end{matrix*}
\end{align*}
\begin{align*}
\begin{matrix*}[l]
&a < 0, b > 0, a+b>0: &a < 0, b > 0, a+b<0:\\
& p'_1 = [0: 1: 0:0:0:0:0],\, p'_2 = [0: 0: 1: 0 : 0 : 0 : 0],& p'_1 = [0: 1: 0:0:0:0:0],\, p'_2 = [0: 0: 1: 0 : 0 : 0 : 0],\\ 
& p'_3 = [0: 0 : 0: 0 : 1: 0 :0],\, p'_4 = [0: 0: 0: 0: 0:1 :0],& p'_3 = [0: 0 : 0: 1 : 0: 0 :0],\, p'_4 = [0: 0: 0: 0: 0:1 :0],\\
& p'_5 = [0: 0: 0: 0 : 0 : 0 : 1],\, p'_6 = [0: 0 : 0: 0 : 0 : 0 : 1],& p'_5 = [0: 0: 0: 0 : 0 : 0 : 1],\, p'_6 = [0: 1 : 0: 0 : 0 : 0 : 0],\\
&\Ch = -b<0.&\Ch = 2a < 0.
\end{matrix*}
\end{align*}
\begin{align*}
\begin{matrix*}[l]
&a = 0, b > 0:& a = 0, b < 0:\\
& p'_1 = [0: 1: 0:0:0:0:0],\, p'_2 = [0: 0: 1: -e^{-a_2} : 0 : 0 : 0],& p'_1 = [0: 0: 1:0:0:0:0],\, p'_2 = [0: 0: 1: -e^{-a_2} : 0 : 0 : 0],\\ 
& p'_3 = [0: 0 : 0: 0: 1: 0 :0],\, p'_4 = [0: 0: 0: 0: 0:1 :0],& p'_3 = [0: 0 : 0: 1 : 0 : 0 :0],\, p'_4 = [0: 0: 0: 0: 1:0 :0],\\
& p'_5 = [0: 0: 0: 0 : 0 : 1 : -e^{a_1} ],\, p'_6 = [0: 0 : 0: 0 : 0 : 0 : 1],& p'_5 = [0: 0: 0: 0 : 0 : 1 : -e^{a_1}  ],\, p'_6 = [0: 1 : 0: 0 : 0 : 0 : 0],\\
&\Ch = -2b < 0.&\Ch = 2b < 0. 
\end{matrix*}
\end{align*}
\begin{align*}
\begin{matrix*}[l]
& a > 0, b = 0:& a < 0, b = 0:\\
& p'_1 = [0: 1: -e^{a_2}:0:0:0:0], p'_2 = [0: 0: 0: 1 : 0 : 0 : 0], & p'_1 = [0: 1: -e^{a_2}:0:0:0:0],\, p'_2 = [0: 0: 1: 0 : 0 : 0 : 0],\\ 
& p'_3 = [0: 0 : 0: 0: 1: 0 :0],\, p'_4 = [0: 0: 0: 0: 1:-e^{-a_3} :0], & p'_3 = [0: 0 : 0: 1 : 0: 0 :0],\, p'_4 = [0: 0: 0: 0: 1:-e^{-a_3} :0],\\
& p'_5 = [0: 0: 0: 0 : 0 : 1 : 0],\, p'_6 = [0: 0 : 0: 0 : 0 : 0 : 1],& p'_5 = [0: 0: 0: 0 : 0 : 0 : 1],\, p'_6 = [0: 1 : 0: 0 : 0 : 0 : 0],\\
&\Ch = -2a<0.&\Ch = 2a<0.
\end{matrix*}
\end{align*}
}
\end{proof}
The claim of Theorem \ref{MainThmDelPezzo} for our $X$ now follows from the (generalised) Arezzo-Pacard Theorem, as discussed in detail in Section \ref{PP2Sec}. 

\section{$T_X = T_{P_9}$}\label{CubicSection}
This is the mirror situation with respect to Section \ref{PP2Sec}. Now $T_X$ is the singular, Gorenstein toric del Pezzo $T_{\Delta}$ with fan polytope $\Delta = P_9$ in the list \cite{Petracci_reflexive}, Fig. 2, i.e., as we know from Section \ref{PP2Sec}, a singular cubic surface with 3 orbifold points. 

Clearly $T_X$ is a Gorenstein toric degeneration for a smooth cubic surface $X = S_3$ and so gives rise to a toric Landau-Ginzburg model mirror to $S_3$. Its crepant resolution $\widetilde{T}_{\Delta}$ is a weak del Pezzo surface and plays an auxiliary role in the construction of the LG potential. We fix the complexified divisor $H = a_0 l + \sum^6_{i = 1}a_i E_i = H' +\sum^6_{i = 4}a_i E_i$. 

Up to a reflection, the polygon $P_9$ is obtained from $P_6$ by adding three lattice vertices, where, again up to a reflection, we take here $P_6$ to be the lattice polygon with ordered set of vertices
\begin{align*}
\{v_1 = (0,1),\,v_2 = (1,1),\,v_3 = (1,0),\,v_4 = (0,-1),\,v_5 = (-1,-1),\,v_6 = (-1,0)\}.
\end{align*}
Then $P_9$ is obtained from $P_6$ by adding the vertices
\begin{align*}
\{v_7 = (1,2),\,v_8 = (1,-1),\,v_9 = (-2,-1)\}.
\end{align*}

Thus, the potential $f_{\widetilde{T}_{\Delta}, H}$ can be obtained from $f_{S_6, H'}$ by applying the general inductive relation in \cite{Przyjalkowski_CYLG}, Section 3, Construction 16 independently at each new vertex, 
\begin{align}\label{fNodalCubic}
& f_{\widetilde{T}_{\Delta}, H} = f_{S_6, H'} + e^{-a_2} e^{-a_4} x y^2 + e^{-a_0-a_1} e^{-a_5} \frac{x}{y} + e^{-a_0} e^{-a_0 -a_3} e^{-a_6} \frac{1}{x^2 y}.  
\end{align}
However, since $P_9$ is singular, i.e. contains non-vertex lattice points on its boundary, this is only an auxiliary potential, and the actual potential $f_{S_3, H}$ is obtained by applying the modification discussed in \cite{Przyjalkowski_CYLG}, Section 3, Construction 16 independently at each non-vertex lattice point on the boundary. Thus, we consider the positively (clockwise) ordered sets of lattice points in each boundary component, 
\begin{align*}
(w_0, w_1, w_2, w_3) = (v_8, v_4, v_5, v_9),\,(v_7, v_2, v_3, v_8),\,(v_9, v_6, v_1, v_7).
\end{align*}
Then, setting
\begin{equation*}
f_{\widetilde{T}_{\Delta}, H} = \sum_i m_{v_i} x^{v_i},
\end{equation*}
the coefficient of $f_{S_3, H}$ at $w_i$, for $i = 1, 2$, is given by the coefficient of $s^i$ in the polynomial
\begin{equation}\label{GiventalModification}
m_{w_0}\left(1+\frac{m_{w_1}}{m_{w_0}} s\right)\left(1+\frac{m_{w_2}}{m_{w_1}}s\right)\left(1+\frac{m_{w_3}}{m_{w_2}}s\right).
\end{equation}
Using \eqref{fS6}, \eqref{fNodalCubic} and \eqref{GiventalModification} we compute
\begin{align*}
& f_{S_3, H} = e^{-a_2 - a_4} x y^2 + e^{-a_0-a_1-a_5} \frac{x}{y} +   e^{-2a_0 -a_3 -a_6} \frac{1}{x^2 y} + \sum^{3}_{i = 1} f^{(i)}_{S_3, H},
\end{align*}
where the non-vertex boundary contributions are given by
\begin{align*}
&f^{(1)}_{S_3, H} = e^{-2  a_0 - a_2 -  a_3 -  a_4 -  a_6 }
   \left(e^{ a_0 +  a_2 + a_3 + a_4 }+e^{ a_0 + a_2 +  a_4 + a_6 }+1\right)\frac{1}{x}\\
&+e^{- a_0 -  a_2 -  a_3 - a_4 -  a_6 }
   \left(e^{ a_0 + a_2  +  a_3 +  a_4 +  a_6 }+e^{ a_3 }+e^{ a_6}\right)y,\\ 
&f^{(2)}_{S_3, H} =  e^{- a_0 - a_1 -  a_2 -  a_4 -  a_5}
   \left(e^{ a_0 + a_1 +  a_2 +  a_5}+e^{ a_0 + a_1 +  a_4 +  a_5}+1\right)x y\\
& + e^{- a_0 -  a_1 - a_2 - a_4 -  a_5}
   \left(e^{ a_0 + a_1 + a_2 + a_4 + a_5}+e^{ a_2 }+e^{ a_4 }\right)x,\\
&f^{(3)}_{S_3, H} = e^{-2 a_0 -  a_1- a_3 - a_5 - a_6}
   \left(e^{a_0 + a_1 + a_3 +  a_6} + e^{ a_0 +  a_3 + a_5 + a_6}+1\right)\frac{1}{y}\\
&+ e^{-2 a_0 - a_1 - a_3 -  a_5 - a_6 }
   \left(e^{ a_0 + a_1 +  a_3 + a_5 + a_6}+e^{ a_1}+ e^{ a_5}\right)\frac{1}{x y}.
\end{align*}
The dual of the reflexive polygon $P_9$ is equivalent to $P_3$. Therefore, the anticanonical embedding $T_{\nabla} \hookrightarrow \PP^{9}$ coincides with the Veronese map of degree $3$. Its ideal is generated by quadrics, given by \eqref{quadEqsP6} (corresponding to our previous homogeneous monoid relations for $P_6$) together with the new monoid relations involving the added vertices $v_7, v_8, v_9$. They comprise those involving $x_0$,
\begin{align*}
x_1 x_2 = x_0 x_7,\, x_3 x_4 = x_0 x_8,\,x_5 x_6 = x_0 x_9,\,
\end{align*}
together with a set of twisted cubic relations in the set of variables corresponding to each edge of $P_9$, namely
\begin{align*}
& Q_1 = \{x_1 x_9 - x^2_6,\,x_6 x_7 - x^2_1,\,x_7 x_9 - x_1 x_6\},\\
& Q_2 = \{x_3 x_7 - x^2_2,\,x_2 x_8 - x^2_3,\,x_7 x_8 - x_2 x_3\},\\
& Q_3 = \{x_5 x_8 - x^2_4,\,x_4 x_9 - x^2_5,\,x_8 x_9 - x_4 x_5\}.
\end{align*}
The toric boundary is cut out by $x_0 = 0$ and so it is given by the union of the twisted cubics in projective subspaces of $\PP^9$,   
\begin{align*}
& V(Q_1) = \{[s^2 t: s t^2: s^3:t^3]\}\subset \PP[x_1 : x_6: x_7: x_9],\\
& V(Q_2) = \{[s^2 t: s t^2: s^3:t^3]\}\subset \PP[x_2:x_3:x_7:x_8],\\
& V(Q_3) = \{[s^2 t: s t^2: s^3:t^3]\}\subset \PP[x_4:x_5:x_8:x_9].
\end{align*}
So the restriction of $f_{S_3, H}$ to the torus orbit in each boundary component is given by
\begin{align*}
& f_{S_3, H}|_{V(Q_1)} =  e^{-2a_0 -a_3 -a_6}\\
&+e^{-2  a_0 - a_2 -  a_3 -  a_4 -  a_6 }
   \left(e^{ a_0 +  a_2 + a_3 + a_4 }+e^{ a_0 + a_2 +  a_4 + a_6 }+1\right)s\\
&+e^{- a_0 -  a_2 -  a_3 - a_4 -  a_6 }
   \left(e^{ a_0 + a_2  +  a_3 +  a_4 +  a_6 }+e^{ a_3 }+e^{ a_6}\right)s^2\\
&+e^{-a_2 - a_4} s^3,\\
& f_{S_3, H}|_{V(Q_2)} = e^{-a_0 -a_1 -a_5}\\
& + e^{- a_0 -  a_1 - a_2 - a_4 -  a_5}
   \left(e^{ a_0 + a_1 + a_2 + a_4 + a_5}+e^{ a_2 }+e^{ a_4 }\right)s\\
&+e^{- a_0 - a_1 -  a_2 -  a_4 -  a_5}
   \left(e^{ a_0 + a_1 +  a_2 +  a_5}+e^{ a_0 + a_1 +  a_4 +  a_5}+1\right)s^2\\
 &+ e^{-a_2 - a_4} s^3,\\
 & f_{S_3, H}|_{V(Q_3)} = e^{-2a_0 -a_3 -a_6}\\
 &+ e^{-2 a_0 - a_1 - a_3 -  a_5 - a_6 }
   \left(e^{ a_0 + a_1 +  a_3 + a_5 + a_6}+e^{ a_1}+ e^{ a_5}\right)s\\
 &+ e^{-2 a_0 -  a_1- a_3 - a_5 - a_6}
   \left(e^{a_0 + a_1 + a_3 +  a_6} + e^{ a_0 +  a_3 + a_5 + a_6}+1\right)s^2\\
&+e^{-a_0-a_1-a_5} s^3.
\end{align*}
Using \eqref{GiventalModification} we compute, up to a factor in $\C^*$,  
\begin{align*}
& f_{S_3, H}|_{V(Q_1)} \propto \left( e^{ a_0 + a_3 }s+1\right) \left( e^{ a_0 + a_6 }s+1\right)
   \left(e^{ a_2 +  a_4}+s\right),\\
& f_{S_3, H}|_{V(Q_2)} \propto \left(e^{ a_2 }+s\right) \left(e^{ a_4 }+s\right) \left(
   e^{ a_0 + a_1 + a_5}s+1\right),\\
& f_{S_3, H}|_{V(Q_3)} \propto \left(e^{ a_1 }+s\right) \left(e^{ a_5 }+s\right) \left(
   e^{ a_0 +  a_3 + a_6}s+1\right).      
\end{align*}
Therefore, the base locus of the pencil defined by $f_{S_3, H}$ is given by the cycle
\begin{align*}
\Bs(\fd_{\Delta}) = \{p_1, \ldots, p_9\} \subset T_{\nabla} \cong \PP^2 \subset \PP^9
\end{align*}
where
\begin{align}\label{CubicArrangement}
\nonumber&p_1 = [e^{-2a_0 - 2a_3}: -e^{-a_0 - a_3}: -e^{-3a_0 - 3a_3}:1] \in \PP[x_1 : x_6: x_7: x_9],\\
\nonumber&p_2 = [e^{-2a_0 - 2a_6}: -e^{-a_0 - a_6}: -e^{-3a_0 - 3a_6}:1] \in \PP[x_1 : x_6: x_7: x_9],\\ 
\nonumber&p_3 = [e^{-2a_2 - 2a_4}: -e^{-a_2 - a_4}: -e^{-3a_2 - 3a_4}:1] \in \PP[x_1 : x_6: x_7: x_9],\\
\nonumber&p_4 = [e^{-2 a_2}: -e^{-a_2 }: -e^{-3 a_2 }:1] \in  \PP[x_2:x_3:x_7:x_8],\\
\nonumber&p_5 = [e^{-2 a_4}: -e^{-a_4 }: -e^{-3 a_4}:1] \in  \PP[x_2:x_3:x_7:x_8],\\ 
\nonumber&p_6 = [e^{-2 (a_0+a_1+a_5) }: -e^{-(a_0+a_1+a_5) }: -e^{-3 (a_0+a_1+a_5)}:1] \in  \PP[x_2:x_3:x_7:x_8],\\
\nonumber&p_7 = [e^{-2 a_1}: -e^{- a_1}: -e^{-3 a_1}:1] \in  \PP[x_4:x_5:x_8:x_9],\\
\nonumber&p_8 = [e^{-2 a_5}: -e^{- a_5}: -e^{-3 a_5}:1] \in  \PP[x_4:x_5:x_8:x_9],\\ 
&p_9 = [e^{-2 (a_0 +  a_3 + a_6)}: -e^{- (a_0 +  a_3 + a_6)}: -e^{-3 (a_0 +  a_3 + a_6)}:1] \in \PP[x_4:x_5:x_8:x_9].
\end{align}
It follows that the locus 
\begin{equation*}
\mathcal{D} := \{p_i = p_j \text{ for some } i \neq j \} \subset \Ka(S_3)_{\C}
\end{equation*}
is given by the intersection with $\Ka(S_3)_{\C}$ of the \emph{explicit, affine hyperplane arrangement} in $H^{1,1}(S_3, \C)$ determined by \eqref{CubicArrangement}. Fix an arbitrary norm on $H^{1,1}(S_3, \C)$, and consider the open subset  
\begin{equation*}
\mathcal{D}_{> \delta} := \{ \operatorname{dist}([\omega_{\C}], \mathcal{D}) > \delta > 0\} \subset \Ka(S_3)_{\C}.  
\end{equation*}
For fixed $\delta > 0$, the points $\{p_i\}$ is the support of the base locus corresponding to $[\omega_{\C}]\in\mathcal{D}_{>\delta}$ are uniformly separated. A straightforward application of the Hilbert-Mumford criterion to configurations of points in $T_{\nabla} = \PP^2$, polarised by $\mathcal{O}_{\PP^2}(1)$, shows that $\{p_i\}$ is a stable point in the relevant Chow variety. The Kempf-Ness theorem gives a zero of the moment map in its orbit. The Arezzo-Pacard Theorem then holds for a uniform $\varepsilon > 0$, depending only on $\delta$.  
\begin{cor}\label{MirrorCubicGeneric} Let $Y = \Bl_{\Bs(\fd_{\Delta})} T_{\nabla}$ denote the above compactified toric Landau-Ginzburg model of the cubic surface $S_3$. There exists $\varepsilon > 0$, depending only on $\delta > 0$, such that the K\"ahler class 
\begin{equation*}
[\omega_{\epsilon}] = \pi^* c_1(\mathcal{O}_{\PP^2}(1)) - \varepsilon^2 \sum^9_{i = 1} c_1(\mathcal{O}_{Y}(E_i))
\end{equation*}
contains a unique cscK representative for all complex structures on $Y$ corresponding to $[\omega_{\C}] \in \mathcal{D}_{> \delta}$.
\end{cor}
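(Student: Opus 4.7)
The plan is to follow the three-step strategy of Section \ref{PP2Sec}, which simplifies here because $T_\nabla$ is already smooth. Since $(P_9)^\circ$ is equivalent to $P_3$ we have $T_\nabla \cong \PP^2$, and its anticanonical embedding into $\PP^9$ is the degree three Veronese, under which each boundary twisted cubic $V(Q_j)$ is the image of a toric line in $\PP^2$. Pulling back \eqref{CubicArrangement} via this embedding we obtain a configuration of nine distinct points $q_1,\ldots,q_9 \in \PP^2$, with exactly three on each of the three toric lines. The hypothesis $[\omega_\C] \in \mathcal{D}_{>\delta}$ translates into uniform lower bounds (depending only on $\delta$) for the pairwise distances $d(q_i,q_j)$ and for the distances of the $q_i$ from the three toric fixed points. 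The canonical K\"ahler-Einstein metric on $T_\nabla$ is of course the Fubini-Study metric.

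Next I would establish Chow-stability of the zero-cycle $q_1 + \cdots + q_9$ under the action of $\Aut_0(T_\nabla) = PGL(3,\C)$. By Mumford's classical criterion for unordered $n$-tuples of points on $\PP^k$ with the symmetric polarisation, such a cycle is stable iff for every proper linear subspace $L$ one has $\#\{i : q_i \in L\} < n(\dim L+1)/(k+1)$; for $n=9$, $k=2$ this reduces to the requirement that every line contains at most five of the $q_i$. Since our configuration places exactly three points on each of three lines and is otherwise in general position, stability is immediate. By Kempf-Ness the $PGL(3,\C)$-orbit contains a zero of the moment map, and the uniform distance bounds above yield a uniform upper bound on the norm of an element of $PGL(3,\C)$ realising this balancing, exactly as in Sections \ref{PP2Sec} and \ref{Deg6KempfNessSec}.

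Finally, since each of the three toric lines of $T_\nabla$ meets the base locus in at least one point, the action of $\Aut_0(T_\nabla)$ does not lift to a continuous action on $Y$, so the automorphism group of $Y$ is discrete. The (generalised) Arezzo-Pacard theorem \cite{ArezzoPacard, ArezzoPacardSinger, Gabor_blowup}, applied after acting by a moment-map balancing element, then produces a unique cscK metric in the class $[\omega_\varepsilon]$ for all $\varepsilon \in (0, \bar\varepsilon)$. The main obstacle, and the whole point of working on $\mathcal{D}_{>\delta}$, is that $\bar\varepsilon$ must be chosen uniformly in $[\omega_\C]$. This follows by inspecting the linear and nonlinear estimates in the proof of Arezzo-Pacard, just as in Section \ref{PP2Sec}: the background Fubini-Study metric is fixed, and the constants in the gluing construction depend continuously only on the locations of the blowup points, on their minimal pairwise separation, and on their distances from any fixed loci, all of which are controlled uniformly in terms of $\delta$.
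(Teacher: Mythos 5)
Your argument is correct and follows essentially the same route as the paper: the paper likewise establishes Chow stability of the nine-point configuration in $T_{\nabla} \cong \PP^2$ via the Hilbert--Mumford criterion (you make explicit the classical numerical criterion for point configurations, which the paper leaves as ``a straightforward application''), invokes Kempf--Ness to produce a moment-map zero in the orbit, and applies the Arezzo--Pacard theorem with $\varepsilon$ uniform in $\delta$. One small caveat: pairwise separation of the $p_i$ alone does not bound their distance from the torus-fixed points (that requires additionally restricting to a bounded region of $\Ka(S_3)_{\C}$, as the paper does elsewhere), but this does not affect the Chow-stability step and your uniformity discussion is at the same level of precision as the paper's own.
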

\subsection{Anticanonical class}\label{C1StrataSec} However, this is not sufficient for our purposes since the anticanonical class $c_{1}(S_3)$ lies in the singular locus $\mathcal{D}$. Indeed, $c_1(S_3)$ corresponds to the choice $a_i = 0,\,i = 0,\ldots, 6$ and so to the cycle
\begin{equation*}
\Bs( c_1(S_3)) := 3 q_1 + 3 q_2 + 3 q_3 := 3 p_1 + 3p_4 + 3p_7.
\end{equation*}
The corresponding complex structure on $Y$ is given by an iterated blowup of $T_{\nabla} \cong \PP^2$, first along the reduced cycle $\Bs(c_1(S_3))^{\operatorname{red}}$, then at the proper transforms of the toric boundary of $T_{\nabla}$, with exceptional divisors $E_{i,j}$ lying over $q_i$, such that
\begin{equation*}
E^2_{i, 1} = E^2_{i, 2} = -2,\,E^2_{i, 3} = -1,\,E_{i, j} . E_{i,  j+1 } = 1.
\end{equation*} 
We can construct stable polarisations on $Y$ as follows. 

Firstly, 
\begin{equation*}
Y_1 := \Bl_{\Bs(c_1(S_3))^{\operatorname{red}}} T_{\nabla} = \Bl_{q_1 + q_2 + q_3} T_{\nabla} 
\end{equation*}
is isomorphic to the degree $6$ del Pezzo $S_6$ discussed in Section \ref{Deg6Sec}. Thus, $Y_1$ is isomorphic to the blowup of $\PP^2$ in $3$ torus-fixed points. According to \cite{ArezzoPacardSinger, Gabor_blowup}, the class 
\begin{equation*}
\pi^*c_1(\mathcal{O}_{\PP^2}(1)) - \varepsilon^2_1 \sum_i [E_i]
\end{equation*} 
on $Y_1$ (with the obvious notation for exceptional divisors) admits an extremal metric. Since the Futaki character vanishes in this class (see e.g. \cite{SektnanTipler_3folds}, Section 4.2), this extremal metric is in fact cscK.

The second blowup $Y_2$ in our construction is isomorphic to blowing up $S_6$ at suitable points $q'_1,\,q'_2,\,q'_3$ contained in its toric boundary components $C_1, C_3, C_5$ described in Section \ref{Deg6Sec}. Recall we have
\begin{equation*}
C_1 = \PP[x_1:x_2],\,C_3 = \PP[x_3:x_4],\,C_5 = \PP[x_5:x_6]
\end{equation*}
with respect to the anticanonical embedding $S_6 \subset \PP[x_0:\cdots: x_6]$. By construction, the points $q'_1,\,q'_2,\,q'_3$ are \emph{not} fixed for the given toric structure on $S_6$. Thus, we write
\begin{align*}
q'_1 = [0:1: \beta_1: 0:0:0:0],\,q'_2 = [0: 0 : 0: 1 : \beta_2 : 0 :0],\,q'_3 = [0: 0: 0: 0 : 0 : 1 : \beta_3],
\end{align*}
for suitable $\beta_i$. As in Section \ref{Deg6KempfNessSec}, the action of a 1PS is given by
\begin{align*}
&t\cdot q'_1 = [0: 1: t^{b} \beta_1:0:0:0:0], \\ 
&t\cdot q'_2 = [0: 0 : 0: t^{a+b} : \beta_2: 0 :0], \\
&t\cdot q'_3 = [0: 0: 0: 0 : 0 : 1 :  t^a \beta_3].
\end{align*}
\begin{lemma} The $K^{\C}$-orbit of $\sum_i q'_i$ in the Chow variety is stable with respect to the linearisation induced by $\pi^*c_1(\mathcal{O}_{\PP^2}(3)) - \sum_i [E_i]$.
\end{lemma}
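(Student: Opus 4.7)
The plan is to follow the approach of Lemma~\ref{Deg6KempfNess}: apply the Hilbert-Mumford criterion to the action of $K^{\C} \cong (\C^*)^2$ on the Chow variety, reducing stability of $\sum_j q'_j$ to a finite chamber-by-chamber check of strict negativity of the Chow weight along every nontrivial 1PS.

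For a 1PS $\lambda = t^a$, $\nu = t^b$ acting via the representation computed in Section~\ref{Deg6KempfNessSec}, the displayed formulas for $t \cdot q'_j$ show that the dominant coordinate of each $t \cdot q'_j$ as $t \to 0$ is controlled respectively by the signs of $b$, $a+b$, and $a$; the hypothesis that the $q'_j$ are not torus-fixed (equivalently, $\beta_j \neq 0$) guarantees that in each open chamber exactly one coordinate of each $t \cdot q'_j$ dominates the limit. The walls $a = 0$, $b = 0$, $a+b = 0$ thus partition $(a, b) \in \Z^2 \setminus 0$ into six open chambers, and in each one the limit cycle consists of three distinct coordinate points of $\PP^6$. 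One then computes the Chow weight in each chamber as the appropriately normalised sum of the weights of those three coordinates under the given linearisation $\pi^* c_1(\mathcal{O}_{\PP^2}(3)) - \sum_j [E_j]$; verifying strict negativity is a direct arithmetic case analysis, entirely analogous to the tables in the proof of Lemma~\ref{Deg6KempfNess}. On each wall, exactly one of the $q'_j$ is fixed by the 1PS (so that the full coordinate pair, including the $\beta_j$ contribution, persists in the limit), while the other two still degenerate to coordinate vertices, and the negativity of the Chow weight follows from the surviving contributions.

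The main (essentially only) obstacle is book-keeping the non-anticanonical normalisation of the linearisation correctly across the chambers and walls. No conceptual input beyond Hilbert-Mumford and the generic-position hypothesis $\beta_j \neq 0$ is required; in particular, under this hypothesis the Chow weight is strictly negative rather than merely nonpositive, because no 1PS of $K^{\C}$ fixes the entire cycle $\sum_j q'_j$.
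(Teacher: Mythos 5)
Your proposal is correct and follows essentially the same route as the paper, which simply observes that the required computation is that of Lemma \ref{Deg6KempfNess} restricted to the points occupying the positions of $p_1, p_3, p_5$ (i.e.\ reading only the first columns of each table), since $t\cdot q'_1, t\cdot q'_2, t\cdot q'_3$ transform exactly as $t\cdot p_1, t\cdot p_3, t\cdot p_5$ there. The only inaccuracy is your concern about a ``non-anticanonical normalisation'': since $\pi^*c_1(\mathcal{O}_{\PP^2}(3)) - \sum_i [E_i] = -K_{S_6}$, the linearisation in the statement is precisely the anticanonical one already used in Lemma \ref{Deg6KempfNess}, so no extra book-keeping is needed.
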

\begin{proof} This follows from precisely the same computation as Lemma \ref{Deg6KempfNess} (reading only the first columns in each case).
\end{proof}
\begin{cor} The $K^{\C}$-orbit of $\sum_i q'_i$ in the Chow variety is stable with respect to the linearisation induced by $\pi^*c_1(\mathcal{O}_{\PP^2}(1)) - \varepsilon^2_1\sum_i [E_i]$.
\end{cor}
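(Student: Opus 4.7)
The plan is to deduce the Corollary from the preceding Lemma by exploiting linearity of the Hilbert--Mumford weight in the polarisation class. Setting $H := \pi^* c_1(\mathcal{O}_{\PP^2}(1))$ and using $-K_{S_6} = 3H - \sum_i [E_i]$, one has the decomposition
\begin{equation*}
H - \varepsilon_1^2 \sum_i [E_i] = \varepsilon_1^2 \bigl( -K_{S_6} \bigr) + (1 - 3\varepsilon_1^2)\, H,
\end{equation*}
with both coefficients strictly positive as soon as $\varepsilon_1^2 < 1/3$, which is automatic since the Arezzo--Pacard--Sz\'ekelyhidi theorem is applied only in the regime of small $\varepsilon_1$. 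By linearity of the Chow weight in the polarisation, for every nontrivial one-parameter subgroup $\alpha$ of $K^{\C}$ one then has
\begin{equation*}
\Ch_{H - \varepsilon_1^2 \sum_i [E_i]}(\alpha) \;=\; \varepsilon_1^2\, \Ch_{-K_{S_6}}(\alpha) \;+\; (1 - 3\varepsilon_1^2)\, \Ch_{H}(\alpha).
\end{equation*}

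The first term is strictly negative by the preceding Lemma; recall that this is itself obtained, as indicated there, from Lemma \ref{Deg6KempfNess} by reading only the first columns $p'_1, p'_3, p'_5$ in each of the eight sign-chambers for $(a,b)$, which correspond precisely to the boundary components $C_1, C_3, C_5$ containing $q'_1, q'_2, q'_3$. It therefore suffices to establish non-positivity of the second term, i.e.\ semistability of the 3-point cycle with respect to the semi-ample pullback class $H$.

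For this, I would observe that $\Ch_H(\alpha)$ equals the Chow weight on $\PP^2$ of the pushforward $0$-cycle $\pi_* \sum_i q'_i$ with respect to $\mathcal{O}_{\PP^2}(1)$, under the restricted toric action of $K^{\C} \subset \Aut_0(\PP^2)$. A direct Hilbert--Mumford verification on $\PP^2$, running through the same eight sign-chambers as in Lemma \ref{Deg6KempfNess} and recording the images of the limits $p'_1, p'_3, p'_5$ (which collapse onto torus-fixed points of $\PP^2$ when the corresponding boundary component is an exceptional divisor of $\pi\!: S_6 \to \PP^2$, and onto points of a toric line otherwise), shows that $\Ch_H(\alpha) \leq 0$ for every such $\alpha$. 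Combining with the strict negativity of the first term yields $\Ch_{H - \varepsilon_1^2 \sum_i [E_i]}(\alpha) < 0$, and the Corollary follows from the Hilbert--Mumford criterion.

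The main obstacle is the last semistability check: unlike the first contribution it is not a direct consequence of the preceding Lemma, and must be extracted by a case-by-case analysis. The computation is however entirely parallel in structure to the Lemma's proof -- the one-parameter subgroups and limit cycles are identical, only the line bundle used to compute the weights changes -- so one need merely verify the sign of a few elementary sums of toric $SL(3)$-weights on $\PP^2$ chamber by chamber.
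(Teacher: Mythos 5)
Your argument is correct and is essentially the paper's: the same convex decomposition of the polarisation into $(1-3\varepsilon_1^2)\,\pi^*c_1(\mathcal{O}_{\PP^2}(1))$ plus $\varepsilon_1^2(\pi^*c_1(\mathcal{O}_{\PP^2}(3))-\sum_i[E_i])$, linearity of the Chow weight in the linearisation, and the preceding Lemma for the anticanonical summand. The chamber-by-chamber verification you flag as the remaining obstacle is in fact unnecessary: since $\pi(q'_i)=q_i$ are the three torus-fixed coordinate points of $\PP^2$, the pushforward cycle is the balanced configuration and $\Ch_{H}(\alpha)=0$ for every nontrivial $1$PS of $K^{\C}$, which is precisely the content of the paper's one-line remark that this class is pulled back from the base.
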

\begin{proof} By the previous Lemma, the Chow weight of $\sum_i q'_i$ with respect to any 1PS and the linearisation induced by $\pi^*c_1(\mathcal{O}_{\PP^2}(3)) - \sum_i [E_i]$ is positive. So the same is true for the polarisation induced by $\varepsilon^2_1(\pi^*c_1(\mathcal{O}_{\PP^2}(3)) - \sum_i [E_i])$, and also for that induced by
\begin{equation*}
\pi^*c_1(\mathcal{O}_{\PP^2}(1)) - \varepsilon^2_1\sum_i [E_i] = (1 - 3\varepsilon^2_1)\pi^*c_1(\mathcal{O}_{\PP^2}(1)) + \varepsilon^2_1(\pi^*c_1(\mathcal{O}_{\PP^2}(3)) - \sum_i [E_i])
\end{equation*}
since $\pi^*c_1(\mathcal{O}_{\PP^2}(1))$ is pulled back from the base, so the Chow weight of the cycle $\sum_i q'_i$ with respect to $\pi^*c_1(\mathcal{O}_{\PP^2}(1))$ equals the Chow weight of its projection $\sum_i q_i$ with respect to $c_1(\mathcal{O}_{\PP^2}(1))$, and the latter Chow weight vanishes by polystability.   
\end{proof}
Write $E'_{i,j}$ for the exceptional divisors of the second blowup, with
\begin{equation*}
E'^2_{i, 1} = -2,\,E'^2_{i, 2} = -1,\,E'_{i, 1} . E'_{i,  2} = 1.
\end{equation*} 
By the above Corollary and the Arezzo-Pacard Theorem, the class on $Y_2$ given by
\begin{equation*}
\pi^*(\mathcal{O}_{\PP^2}(1)) - \sum_i[\varepsilon^2_1 E'_{i, 1} + (\varepsilon^2_1 + \varepsilon^2_2)E'_{i, 2}]  
\end{equation*}
admits a cscK representative for $0 < \varepsilon_2 \ll \varepsilon_1 \ll 1$. Now $Y$ is obtained by blowing up $Y_2$ at suitable points $q''_{i}$ of the exceptional divisors $E'_{i, 2}$. However, $Y_2$ has discrete automorphisms, so by the Arezzo-Pacard Theorem, the class on $Y$ given by
\begin{equation}\label{CubicClass}
\pi^*(\mathcal{O}_{\PP^2}(1)) - \sum_i [\varepsilon^2_1 E_{i, 1} + (\varepsilon^2_1 + \varepsilon^2_2) E_{i, 2} + (\varepsilon^2_1 + \varepsilon^2_2 +\varepsilon^2_3 )E_{i,3}]  
\end{equation} 
admits a unique cscK representative for $0 < \varepsilon_3 \ll \varepsilon_2 \ll \varepsilon_1 \ll 1$.
\subsection{Deformation theory}\label{DefoTheorySec}
A complex structure on $Y$ corresponding to $[\omega_{\C}] \in \mathcal{D}_{>\delta}$ can specialise to the complex structure corresponding to $c_1(S_3)$. Generically, up to a choice of indices, the map at the level of exceptional divisors is given by
\begin{align*}
& E_1 \mapsto \sum^3_{j = 1} E_{1, j},\quad E_2 \mapsto \sum^3_{j = 2} E_{1, j},\quad E_3 \mapsto E_{1, 3}, \\
& E_4 \mapsto \sum^3_{j = 1} E_{2, j},\quad E_5 \mapsto \sum^3_{j = 2} E_{2, j},\quad E_6 \mapsto E_{2, 3}, \\
& E_7 \mapsto \sum^3_{j = 1} E_{3, j},\quad E_8 \mapsto \sum^3_{j = 2} E_{3, j},\quad E_9 \mapsto E_{3, 3}.
\end{align*}
So, the class \eqref{CubicClass} is the specialisation of a class $[\omega_{\C}] \in \mathcal{D}_{>\delta}$ given by
\begin{equation*} 
\pi^*(\mathcal{O}_{\PP^2}(1)) -  \varepsilon^2_1 [E_1 + E_4 + E_7] - \varepsilon^2_2 [E_2 + E_5 + E_8] - \varepsilon^2_3 [E_3 + E_6 + E_9]. 
\end{equation*}
A slight variant of the proof of Corollary \ref{MirrorCubicGeneric} gives the following.
\begin{prop} For $0 < \varepsilon_3 \ll \varepsilon_2 \ll \varepsilon_1 \ll 1$, depending only on $\delta > 0$, the K\"ahler class 
\begin{equation}\label{NearbyClass} 
\pi^*(\mathcal{O}_{\PP^2}(1)) -  \varepsilon^2_1 [E_1 + E_4 + E_7] - \varepsilon^2_2 [E_2 + E_5 + E_8] - \varepsilon^2_3 [E_3 + E_6 + E_9] 
\end{equation}
contains a unique cscK representative for all complex structures on $Y$ corresponding to $[\omega_{\C}] \in \mathcal{D}_{> \delta}$.
\end{prop}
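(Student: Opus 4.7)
The plan is to adapt the iterated blowup strategy of Section \ref{C1StrataSec} to the generic complex structure $Y^{[\omega_{\C}]}$ for $[\omega_{\C}] \in \mathcal{D}_{>\delta}$, replacing the collapsed triple $q_1, q_2, q_3$ by the three clusters of base points $\{p_1,p_2,p_3\}$, $\{p_4,p_5,p_6\}$, $\{p_7,p_8,p_9\}$, each living on a distinct side of the toric triangle in $T_{\nabla}\cong\PP^2$, which collide to $q_1,q_2,q_3$ respectively as $[\omega_{\C}]\to c_1(S_3)$. The hierarchy $\varepsilon_1 \gg \varepsilon_2 \gg \varepsilon_3$ in \eqref{NearbyClass} corresponds exactly to three successive blowup stages, each carried out via Arezzo-Pacard-Sz\'ekelyhidi with scales shrinking between stages.

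Stage 1: blow up $\{p_1,p_4,p_7\}$ on $T_{\nabla}\cong\PP^2$ (one point on each boundary line). This is the setup of Section \ref{PP2Sec}: the structure torus $(\C^*)^2$ acts on $\PP^2$ and an explicit Hilbert-Mumford / Kempf-Ness computation produces a one-parameter subgroup balancing the configuration, yielding Chow stability uniform over $\mathcal{D}_{>\delta}$ (the points are $\delta$-separated from each other and from the torus-fixed vertices). Arezzo-Pacard-Sz\'ekelyhidi supplies a cscK representative on $Y_1^{[\omega_{\C}]}$ in the class $\pi^*\mathcal{O}_{\PP^2}(1) - \varepsilon_1^2\sum_{i\in\{1,4,7\}}[E_i]$ for $\varepsilon_1\in(0,\bar\varepsilon_1(\delta))$. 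Stage 2: $Y_1^{[\omega_{\C}]}\cong S_6$ carries a residual $2$-torus in $\Aut_0$; the triple $\{p_2,p_5,p_8\}$ again has one point on each of three toric boundary components, and the explicit Hilbert-Mumford analysis of Section \ref{Deg6KempfNessSec} (reading off the leading columns) confirms uniform Chow stability, so Arezzo-Pacard-Sz\'ekelyhidi gives cscK in the updated class for $\varepsilon_2\in(0,\bar\varepsilon_2(\varepsilon_1,\delta))$. Stage 3: $Y_2^{[\omega_{\C}]}$ is an $8$-point blowup of $\PP^2$, hence generically has discrete automorphisms, and the classical Arezzo-Pacard Theorem applied to $\{p_3,p_6,p_9\}$ yields the cscK metric in the full class \eqref{NearbyClass} for $\varepsilon_3\in(0,\bar\varepsilon_3(\varepsilon_2,\delta))$. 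Uniqueness at each stage follows from the standard uniqueness results recalled in the introduction.

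The uniformity of the thresholds $\bar\varepsilon_i$ in $\delta$ follows because the positions of the $p_i$, the intermediate automorphism groups, and the resulting Arezzo-Pacard gluing constants depend continuously on $[\omega_{\C}]$ and on the separations, which are controlled from below by $\delta$. The main obstacle will be Stage 2: the base surface $Y_1^{[\omega_{\C}]}\cong S_6$ has a nontrivial reductive $\Aut_0$, and the locations of $p_2,p_5,p_8$ on $Y_1^{[\omega_{\C}]}$ vary with $[\omega_{\C}]$, in particular degenerating toward the exceptional divisors of Stage 1 as $[\omega_{\C}]\to c_1(S_3)$. Maintaining uniform Kempf-Ness balancing in this limit requires either a careful direct moment-map estimate as in Section \ref{Deg6KempfNessSec}, or a compactness/openness argument interpolating between the fully generic stability established in Corollary \ref{MirrorCubicGeneric} and the special-fiber argument of Section \ref{C1StrataSec}; either route should suffice to give the claimed uniform result on $\mathcal{D}_{>\delta}$.
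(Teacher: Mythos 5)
The paper's own proof of this Proposition is a one-line deferral to Corollary \ref{MirrorCubicGeneric}: the nine base points are distinct and uniformly separated on $\mathcal{D}_{>\delta}$, the $9$-point cycle is Chow stable for $SL(3,\C)$ (no repeated points and at most three on any line), and the blowup theorem is applied once, with the three unequal parameters absorbed as a ``slight variant''. Your three-stage iterated construction, modelled on Section \ref{C1StrataSec}, is a genuinely different and in principle more careful route --- indeed the standard blowup theorems are stated for a fixed weight vector with a single parameter tending to zero, so the hierarchy $\varepsilon_3\ll\varepsilon_2\ll\varepsilon_1$ does invite an iterated argument. However, as written your argument has two gaps.

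First, at Stage 1 a three-point cycle in $\PP^2$ is never Chow \emph{stable} for $SL(3,\C)$; when $p_1,p_4,p_7$ are non-collinear it is only polystable, $Y_1\cong S_6$ retains a two-torus of automorphisms, and \cite{ArezzoPacardSinger, Gabor_blowup} produce an \emph{extremal} metric. You need the Futaki-vanishing argument (exactly as the paper invokes it for $Y_1$ in Section \ref{C1StrataSec}) to conclude cscK, and you should note that $\mathcal{D}_{>\delta}$ only excludes collisions, not the collinear locus where even polystability fails. Second, and more seriously, your Stage 2 appeal to Lemma \ref{Deg6KempfNess} does not apply to the configuration at hand. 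In the generic case $p_2,p_5,p_8$ lie on the proper transforms of the original lines $L_1,L_2,L_3$; each $L_i$ contains exactly one of the Stage-1 centres, so its proper transform in $Y_1$ is a $0$-curve, \emph{not} one of the six boundary $(-1)$-curves of the toric structure on $S_6$ determined by the triangle $\{p_1,p_4,p_7\}$. The computation of Section \ref{Deg6KempfNessSec} is specifically for points on the boundary components $C_i=\PP[x_i:x_{i+1}]$, and the ``read off the first columns'' trick of Section \ref{C1StrataSec} works there only because in the special fibre the second-stage points are infinitely near, hence sit on the exceptional divisors, which \emph{are} boundary components. For generic $[\omega_{\C}]$ you would need a fresh Hilbert--Mumford/Kempf--Ness computation for three points off the toric boundary of $S_6$ under the residual $(\C^*)^2$, with the linearisation $\pi^*\mathcal{O}_{\PP^2}(1)-\varepsilon_1^2\sum E_i$; this may well succeed, but it is not the cited computation. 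The one-shot route the paper points to avoids both issues, since the full $9$-point cycle is honestly stable and the total blowup has discrete automorphisms.
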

\begin{proof} We work in $\mathcal{D}_{> \delta}$ for a fixed $\delta > 0$. Firstly, as we already observed, the K\"ahler class on $\Bl_{p_1, p_4, p_7}\PP^2$ 
\begin{equation*}
\pi^*(\mathcal{O}_{\PP^2}(1)) -  \varepsilon^2_1 [E_1 + E_4 + E_7]  
\end{equation*}
is cscK for $0 < \varepsilon_1 \ll 1$. Let us denote by $K$ the compact torus fixing $p_1$, $p_4$, $p_7$ on $\PP^2$. A straightforward application of the Hilbert-Mumford criterion shows that the cycle $p_2 + p_5 + p_8$ on $\PP^2$ is polystable with respect to the full automorphism group of $(\PP^2, \mathcal{O}_{\PP^2}(1))$ and so by continuity its Chow weight as a cycle in 
\begin{equation*} 
(\Bl_{p_1, p_4, p_7}\PP^2, \pi^*(\mathcal{O}_{\PP^2}(1)) -  \varepsilon^2_1 [E_1 + E_4 + E_7])
\end{equation*} 
with respect to any nontrivial 1PS in $K$ is strictly positive for a uniform $0 < \varepsilon_1 \ll 1$. Then, by the Arezzo-Pacard Theorem, the K\"ahler class on $\Bl_{p_2, p_5, p_8} \Bl_{p_1, p_4, p_7}\PP^2$
\begin{equation*}
\pi^*(\mathcal{O}_{\PP^2}(1)) -  \varepsilon^2_1 [E_1 + E_4 + E_7] - \varepsilon^2_2 [E_2 + E_5 + E_8]  
\end{equation*}
is cscK for all $0 < \varepsilon_2 \ll \varepsilon_1 \ll 1$ depending only on $\delta > 0$. The claim now follows since $\Bl_{p_2, p_5, p_8} \Bl_{p_1, p_4, p_7}\PP^2$ has discrete automorphisms.
\end{proof}
Finally, we need to show that for some $\delta > 0$, sufficiently small, the open subset $\mathcal{D}_{> \delta}$ overlaps with an open neighbourhood of the class \eqref{CubicClass}, consisting of classes which admit a unique cscK representative. This can be achieved by deforming the complex structure of $Y$ given by the iterated blowup.

Applying the deformation theory of blowups in dimension two (see e.g. \cite{Hartshorne_defos}, Exercise 10.5), we obtain an isomorphism
\begin{align*}
H^1(Y, \mathcal{T}_Y) \cong H^1(Y_2, \mathcal{T}_{Y_2}) \bigoplus_i (\C^2)_{q''_i}
\end{align*}   
and an exact sequence
\begin{align*}
0 \to H^0(Y_1, \mathcal{T}_{Y_1}) \to \bigoplus_i (\C^2)_{q'_i} \to  H^1(Y_2, \mathcal{T}_{Y_2}) \to H^1(Y_1, \mathcal{T}_{Y_1}) \cong 0.
\end{align*}
Moreover, deformations are unobstructed, 
\begin{equation*}
H^2(Y, \mathcal{T}_Y) \cong H^2(Y_2, \mathcal{T}_{Y_2}) \cong H^2(Y_1, \mathcal{T}_{Y_2}) \cong 0 
\end{equation*}   
(this follows e.g. from \cite{Hartshorne_defos}, Exercise 10.5 (b)). So, small deformations of $Y$ correspond precisely to the blowup centers $\{p_i\}$, and the mirror map sends a neighbourhood of $c_1(S_3)$ in the complexified K\"ahler cone holomorphically to a neighbourhood of $Y$ in its smooth Kuranishi space. On the other hand, the automorphism group of $Y$ is discrete. Thus, by the results of \cite{FujikiSchumacher_moduli, LeBrunSimanca, Szekelyhidi_moduli}, there is an open neighbourhood of $Y$ in its smooth Kuranishi space admitting a unique cscK metric in the class \eqref{NearbyClass}. For $\delta > 0$ sufficiently small, the above neighbourhood of $c_1(S_3)$ intersects $\mathcal{D}_{> \delta}$ in an open subset as required.
\subsection{Extension to non-generic strata}\label{NonGenericSec}
The construction in the previous Section can be performed in order to glue to $\mathcal{D}_{> \delta}$ an open neighbourhood of any cscK complex structure on $Y$ corresponding to an iterated blowup. In order to extend  our map across strata of such complex structures on $Y$, we need to show that the parameter $\delta > 0$ can be chosen uniformly. However, this follows at once from the fact that the analytic estimates of \cite{Gabor_blowup} hold uniformly in $\Ka(S_3)_{\C}$ as long as the support of $\Bs(\fd_{\Delta})$ is bounded away from the torus-fixed points of the toric structure on $T_{\nabla}$.
\subsection{$K$-instability of adiabatic classes}\label{AdiabaticSec} 
This Section contains the proof of the Lemma stated in Remark \ref{AdiabaticRmk}.

Recall that the compactified mirror $f\!: Y \to \PP^1$, endowed with any complex structure corresponding to $[\omega_{\C}]$, is always a rational elliptic surface (in the sense of e.g. \cite{Petracci_reflexive}, Section 2.1). The (uniform) $K$-stability of rational elliptic surfaces is studied in \cite{Hattori_CYfibrations}, in the case when $Y$ is endowed with an \emph{adiabatic polarisation}, i.e. one of the form
\begin{equation*}
H + f^*c_1(\mathcal{O}_{\PP^1}(k))
\end{equation*}
for a fixed polarisation $H$ on $Y$ and with $k$ sufficiently large (depending on $(Y, H)$). These polarisations are very far from those considered in the present paper. Indeed, in the current case of the compactified toric Landau-Ginzburg $Y$ model mirror to a cubic surface $S_3$, it is even the case that the complex structure on $Y$ corresponding to $c_1(S_3) \in \Ka(S_3)_{\C}$ is \emph{$K$-unstable for all} $k \gg 1$. 

In order to see this, recall that according to \cite{Hattori_CYfibrations}, Corollary 1.5 $(i)$, if a rational elliptic surface has no multiple fibres and admits a fibre of type $II^*, III^*$ or $IV^*$ is the standard classification (see e.g. \cite{Petracci_reflexive}, Section 2.2), then it is $K$-unstable with respect to adiabatic classes. 

On the other hand, for our present case, the fibres of $f\!: Y \to \PP^1$ are studied in detail in \cite{Petracci_reflexive}, Section 5.16: it is shown there that there are precisely three singular fibres, of types $I_1, I_3$ and $IV^*$.  

Note that, for generic $[\omega_{\C}] \in \Ka(S_3)_{\C}$, the corresponding fibration $f\!: Y \to \PP^1$ has only fibres of type $I_N$ and so, by \cite{Hattori_CYfibrations}, Corollary 1.5 $(1)$, for any fixed polarisation $H$, the polarised surface $(Y, H + f^*c_1(\mathcal{O}_{\PP^1}(k)))$ is $K$-stable for $k \gg 1$. The problem is that, by the above instability result, it is not possible to choose $H$ along a family of complex structures approaching the mirror to $c_1(S_3)$ so that $k$ can be chosen uniformly. This problem does not arise for the polarisations we consider thanks to the uniform estimates in the Arezzo-Pacard and deformation theorems. 
\section{$X = T_{P_{4b}}$}
$X = T_{\Delta}$, $\Delta = P_{4b}$ is a smooth toric del Pezzo isomorphic to $\PP(\mathcal{O}(1)\oplus\mathcal{O})$ (or the blowup of $\PP^2$ at one point). Fix $H = a_0 l + a_1 E$. The corresponding toric LG potential is given by
\begin{equation*}
f = x + \frac{1}{y} + e^{-a_0}\frac{y}{x} + e^{-a_0-a_1} y.
\end{equation*}
The dual polytope is given by $\nabla = P_{8b}$. The singular, Gorenstein toric variety $T_{\nabla}$ is cut out in its anticanonical embedding $T_{\nabla} \subset \PP := \PP[x_0:x_1:x_2:x_3:x_4]$ by quadrics,
\begin{equation*}
\{ x_1 x_3 = x_0 x_4,\,x_2 x_4 = x^2_0 \}
\end{equation*}
where the monomials $x_i$, $i>0$ corresponds to the vertices of $P_{4b}$,
\begin{equation*}
\{v_1 = (1,0),\,v_2 = (0,-1),\,v_3 = (-1,1),\,v_4=(0,1)\}.
\end{equation*}
Thus, $T_{\nabla}$ is a Gorenstein toric degeneration of a smooth degree $4$ del Pezzo, and the Laurent polynomial $f$ extends to the anticanonical pencil on $T_{\nabla}$ given by
\begin{equation*}
\mathfrak{d}_{\Delta} := |x_1 + x_2 + e^{-a_0}x_3 + e^{-a_0-a_1} x_4 , x_0|
\end{equation*}
The divisor $(x_0)$ is the toric boundary of $T_{\nabla}$ in the fixed toric structure. Thus, the base locus $\Bs(\mathfrak{d}_{\Delta}) = \{p_1,\,p_2,\,p_3,\,p_4\}$ contains a reduced point for each boundary component and is given by 
\begin{align*}
&p_1 = [1:-1] \in \PP[x_1: x_2],\,p_2 = [1:-e^{-a_0}] \in \PP[x_2: x_3],\\
&p_3 = [1:-e^{-a_1}] \in \PP[x_3: x_4],\,p_4 = [1:-e^{-a_0-a_1}] \in \PP[x_1: x_4].
\end{align*}
The toric resolution $\widetilde{T}_{\nabla}$ has a fan corresponding to the maximal subdivision of $\nabla$, and so it is given by
\begin{equation*}
\widetilde{T}_{\nabla} = \Bl_{q'_2, q'_3} \Bl_{q_1, q_2, q_3}\PP^2 \cong \Bl_{q'_2, q'_3} S_6,
\end{equation*}
where $q'_2$, $q'_3$ are torus fixed points contained in the exceptional divisors over $q_2$, $q_3$. The compactified LG model is then given by
\begin{equation*}
Z = \Bl_{\Bs(\mathfrak{d}_{\Delta})} \widetilde{T}_{\nabla} \cong \Bl_{\Bs(\mathfrak{d}_{\Delta})}\Bl_{q'_2, q'_3} S_6.
\end{equation*}
By considering the subdivision of $\nabla$, we see that, following the notation of Section \ref{Deg6Sec}, we can choose the identification $\Bl_{q_1, q_2, q_3}\PP^2 \cong S_6$ so that the the basepoints $p_1$, $p_2$ are mapped to non torus-fixed points of the boundary components $C_1$, $C_2$, while the torus-fixed point $q'_2$ is mapped to $C_5 \cap C_6$. Let us consider the stability of a cycle on $S_6$ supported at $p_1, p_2, q'_2$, with respect to the (K\"ahler-Einstein) anticanonical polarisation. Since the Arezzo-Pacard Theorem allows nontrivial weights, it is possible to work with the cycle
\begin{equation*}
p_1 + p_2 + (1+ \delta)q'_2
\end{equation*}
where $\delta \in (0, 1)$ is a parameter (see \cite{Stoppa_JAG} for more details on such weighted cycles). The stability of this cycle can be studied following the computation in Lemma \ref{Deg6KempfNess}, however, the torus-fixed point $q'_2 = C_5 \cap C_6$ provides a fixed contribution $-\delta(a+b)$ to the Chow weight, so we have 
{\tiny
\begin{align*}
\begin{matrix*}[l]
&a, b > 0: & a, b < 0:\\
&p'_1 = [0: 1: 0:0:0:0:0],\, p'_2 = [0: 0: 0: 1 : 0 : 0 : 0], &p'_1 = [0: 0: 1:0:0:0:0],\, p'_2 = [0: 0: 1: 0 : 0 : 0 : 0],\\ 
&\Ch = a + b - (1+\delta)(a+b) = -\delta(a+b)< 0.&\Ch = 2(a+b)- (1+\delta)(a+b) = (1-\delta)(a+b) < 0.
\end{matrix*}
\end{align*}
\begin{align*}
\begin{matrix*}[l]
& a > 0, b < 0, a+b > 0: & a > 0, b < 0, a+b < 0:\\
& p'_1 = [0: 0: 1:0:0:0:0],\, p'_2 = [0: 0: 0: 1 : 0 : 0 : 0],& p'_1 = [0: 0: 1:0:0:0:0],\, p'_2 = [0: 0: 0: 1 : 0 : 0 : 0],\\ 
&\Ch = a+b + b - (1+\delta)(a+b) = -\delta a +(1 - \delta)b < 0. &\Ch = -\delta a +(1 - \delta)b < 0.
\end{matrix*}
\end{align*}
\begin{align*}
\begin{matrix*}[l]
&a < 0, b > 0, a+b>0: &a < 0, b > 0, a+b<0:\\
& p'_1 = [0: 1: 0:0:0:0:0],\, p'_2 = [0: 0: 1: 0 : 0 : 0 : 0],& p'_1 = [0: 1: 0:0:0:0:0],\, p'_2 = [0: 0: 1: 0 : 0 : 0 : 0],\\ 
&\Ch =a + a + b - (1+\delta)(a+b) = (1-\delta) a -\delta b < 0 &\Ch = (1-\delta) a -\delta b < 0.
\end{matrix*}
\end{align*}
\begin{align*}
\begin{matrix*}[l]
&a = 0, b > 0:& a = 0, b < 0:\\
& p'_1 = [0: 1: 0:0:0:0:0],\, p'_2 = [0: 0: 1: -e^{-a_2} : 0 : 0 : 0],& p'_1 = [0: 0: 1:0:0:0:0],\, p'_2 = [0: 0: 1: -e^{-a_2} : 0 : 0 : 0],\\ 
&\Ch = a + a + b - (1+\delta)(a+b) = -\delta b < 0. &\Ch = 2(a+b) - (1+\delta)(a+b) = (1-\delta) b < 0. 
\end{matrix*}
\end{align*}
\begin{align*}
\begin{matrix*}[l]
& a > 0, b = 0:& a < 0, b = 0:\\
& p'_1 = [0: 1: -e^{a_2}:0:0:0:0], p'_2 = [0: 0: 0: 1 : 0 : 0 : 0], & p'_1 = [0: 1: -e^{a_2}:0:0:0:0],\, p'_2 = [0: 0: 1: 0 : 0 : 0 : 0],\\ 
&\Ch = a + b - (1+\delta)(a+b) = -\delta a < 0.&\Ch = a + a + b - (1+\delta)(a+b) = (1-\delta)a < 0.
\end{matrix*}
\end{align*}
}
It follows that the auxiliary surface
\begin{equation*}
Z' = \Bl_{p_1, p_2, q'_2} S_6
\end{equation*}
admits a unique cscK metric in the class
\begin{equation*}
[\omega'] = \pi^* c_1(S_6) - \varepsilon^2_1 ([E_{p_1}] + [E_{p_2}] + \delta [E_{q'_2}]),
\end{equation*}
for $0 < \varepsilon_1 \ll 1$ and so, since $Z'$ has discrete automorphisms, the compactified LG model
\begin{equation*}
Z = \Bl_{p_3, p_4, q'_3} Z'
\end{equation*}
admits a unique cscK metric in the class
\begin{equation*}
[\omega] = \pi^* c_1(S_6) - \varepsilon^2_1 ([E_{p_1}] + [E_{p_2}] + \delta [E_{q'_2}]) - \varepsilon^2_2([E_{p_3}] + [E_{p_4}] + [E_{q'_3}])
\end{equation*}
for $0 < \varepsilon_2 \ll \varepsilon_1 \ll 1$. The parameters $\varepsilon_1, \varepsilon_2$ can be chosen uniformly as long as the basepoints corresponding to $[\omega_{\C}]$ remain bounded away from the torus fixed points of $T_{\nabla}$.
\section{$\Delta = P_{8b},\,P_{8c}$}
We give full details for $\Delta = P_{8b}$, the case when $\Delta = P_{8c}$ is similar. This is the mirror of the previous Section, so $T_X = T_{\Delta}$, $\Delta = P_{8b}$ is a Gorenstein toric degeneration of a smooth smooth degree $4$ del Pezzo $X = S_4$. Fix the complexified effective divisor $H = a_0 l + \sum^5_{i = 1} E_i$.

We proceed as in Section \ref{CubicSection}. Up to a reflection, the polygon $P_9$ is obtained from $P_6$ by adding to the ordered set of vertices
\begin{align*}
\{v_1 = (0,1),\,v_2 = (1,1),\,v_3 = (1,0),\,v_4 = (0,-1),\,v_5 = (-1,-1),\,v_6 = (-1,0)\}.
\end{align*}
the lattice points $\{ v_7 = (1,-1),\,v_8 = (-2,-1)\}$. So, according to \cite{Przyjalkowski_CYLG}, Section 3, Construction 16, the LG potential of the crepant resolution $\widetilde{T}_{\Delta}$ is given by
\begin{align*} 
& f_{\widetilde{T}_{\Delta}, H} = f_{S_6, H'} +  e^{-a_0-a_1} e^{-a_4} \frac{x}{y} + e^{-a_0} e^{-a_0 -a_3} e^{-a_5} \frac{1}{x^2 y}.
\end{align*}
The LG potential of $S_4$ is obtained by adding suitable corrections to $f_{\widetilde{T}_{\Delta}, H}$, due to the orbifold singularities. Such corrections correspond to ordered sets of lattice points on the boundary of $\Delta$,
\begin{align*}
\{(v_7, v_4, v_5, v_8),\,(v_2, v_3, v_7),\,(v_8, v_6, v_1)\}.
\end{align*}
Write
\begin{equation*}
f_{\widetilde{T}_{\Delta}, H} = \sum_i m_{v_i} x^{v_i}.
\end{equation*}
Then, $f_{S_{4}, H}$ is a sum of contributions from vertices and non-vertex lattice points on the boundary,  
\begin{align*}
& f_{S_{4}, H} =  y + e^{-a_2} x y + e^{-a_0-a_1} e^{-a_4} \frac{x}{y} + e^{-a_0} e^{-a_0 -a_3} e^{-a_5} \frac{1}{x^2 y} + \sum^3_{i=1}f^{(i)}_{S_4, H},
\end{align*}
where
\begin{align*} 
& f^{(1)}_{S_4, H} = \left(m_{v_7}\left(1+\frac{m_{v_4}}{m_{v_7}} s\right)\left(1+\frac{m_{v_5}}{m_{v_4}}s\right)\left(1+\frac{m_{v_8}}{m_{v_5}}s\right)\right)([s] x^{v_4} + [s^2] x^{v_5}),\\
&f^{(2)}_{S_4, H} = \left(m_{v_2}\left(1+\frac{m_{v_3}}{m_{v_2}} s\right)\left(1+\frac{m_{v_7}}{m_{v_3}}s\right)\right)([s] x^{v_3}),\\
&f^{(3)}_{S_4, H} = \left(m_{v_8}\left(1+\frac{m_{v_6}}{m_{v_8}} s\right)\left(1+\frac{m_{v_1}}{m_{v_6}}s\right)\right)([s] x^{v_6}).
\end{align*}
The dual of $P_{8b}$ is $P_{4b}$, so $T_{\nabla}$ is isomorphic to $\PP(\mathcal{O}(1)\oplus\mathcal{O})$. The equations cutting out the canonical embedding $T_{\nabla} \hookrightarrow \PP[x_0 : \cdots : x_8]$ are the same as in Section \ref{CubicSection} after eliminating all equations involving $x_7$ there and replacing $x_8, x_9$ there with $x_7, x_8$, and so are given by the quadrics \eqref{quadEqsP6}, new relations involving $x_0$, 
\begin{align*}
\{x_3 x_4 = x_0 x_7,\,x_5 x_6 = x_0 x_8\}, 
\end{align*}
as well as  
\begin{align*}
\{x_1 x_8 - x^2_6,\,x_2 x_7 - x^2_3,\,x_5 x_7 - x^2_4,\,x_4 x_8 - x^2_5,\,x_7 x_8 - x_4 x_5\}.
\end{align*}
Thus, the components of the toric boundary of $T_{\nabla}$ are given by the linear subspace and conics
\begin{align*}
&C_1 = \PP[x_1: x_2],\,C_2 = \{x_2 x_7 - x^2_3\} = \{[s^2:st:t^2]\}\subset\PP[x_2 : x_3 : x_7],\\ 
&C_3 = \{x_1 x_8 - x^2_6\} = \{[s^2:st:t^2]\}\subset \PP[x_1: x_6: x_8] 
\end{align*}
and the twisted cubic
\begin{align*}
C_4 = \{x_5 x_7 - x^2_4,\,x_4 x_8 - x^2_5,\,x_7 x_8 - x_4 x_5\}\cong \{[s^2 t : s t^2 : s^3 : t^3]\} \subset\PP[x_4: x_5: x_7: x_8].
\end{align*}
The upshot is that we can now compute explicitly $f_{S_{4}, H}$ and its restriction to the toric boundary of $T_{\nabla}$. We find (up to factors in $\C^*$)
\begin{align*}
& f_{S_{4}, H}|_{C_1} \propto e^{-a_2 }+ s,\\
& f_{S_{4}, H}|_{C_2} \propto \left(e^{ a_2 }+s\right) \left(s e^{ a_0 +  a_1 +  a_4 }+1\right),\\
& f_{S_{4}, H}|_{C_3} \propto \left(s e^{ a_0 +  a_1 }+1\right) \left(s e^{ a_0 + a_3 + a_5}+e^{ a_1 }\right),\\
& f_{S_{4}, H}|_{C_4} \propto \left(e^{ a_1} + s\right) \left(e^{a_4} + s\right) \left(s
   e^{ a_0 + a_3 + a_5}+1\right).
\end{align*}
In particular, the base locus of the pencil defined by $f_{S_4, H}$ is given by the cycle
\begin{align*}
\Bs(\fd_{\Delta}) = \{p_1, \ldots, p_8\} \subset T_{\nabla} \cong \PP(\mathcal{O}(1)\oplus\mathcal{O}) \subset \PP^8,
\end{align*}
where
\begin{align}\label{Deg4Arrangement}
\nonumber&p_1 = [e^{-a_2 } : 1] \in C_1 \subset \PP[x_1 : x_2],\\
\nonumber&p_2 = [e^{2 a_2}: - e^{a_2}: 1] \in C_2 \subset \PP[x_2 : x_3 : x_7],\\ 
\nonumber&p_3 = [e^{-2(a_0 +  a_1 + a_4)}: -e^{ -a_0 -  a_1 -  a_4 }:1]\in C_2 \subset  \PP[x_2 : x_3 : x_7],\\
\nonumber&p_4 = [s^{-2(a_0 +  a_1)} : -e^{-a_0 -  a_1} :1] \in C_3 \subset \PP[x_1: x_6: x_8],\\
\nonumber&p_5 = [e^{2(a_1 - a_0 - a_3 - a_5)} : -e^{a_1 - a_0 - a_3 - a_5} : 1] \in C_3 \subset \PP[x_1: x_6: x_8],\\ 
\nonumber&p_6 = [ e^{2a_1} : -e^{a_1}  : -e^{3a_1} : 1] \in C_4 \subset \PP[x_4: x_5: x_7: x_8],\\
\nonumber&p_7 = [ e^{2a_4} : -e^{a_4}  : -e^{3a_4} : 1] \in C_4 \subset \PP[x_4: x_5: x_7: x_8],\\
&p_8 = [e^{-2(a_0 + a_3 + a_5)} : -e^{-a_0 - a_3 - a_5} : -e^{-3(a_0 + a_3 + a_5)} : 1] \in C_4 \subset \PP[x_4: x_5: x_7: x_8].
\end{align}
As in Section \ref{CubicSection} we introduce the open loci
\begin{equation*}
\mathcal{D} := \{p_i = p_j \text{ for some } i \neq j \},\,\mathcal{D}_{> \delta} := \{ \operatorname{dist}([\omega_{\C}], \mathcal{D}) > \delta > 0\} \subset \Ka(S_4)_{\C}.
\end{equation*}
Then $\mathcal{D}$ is given by the intersection with $\Ka(S_4)_{\C}$ of the \emph{explicit, affine hyperplane arrangement} in $H^{1,1}(X, \C)$ determined by \eqref{Deg4Arrangement}. For fixed $\delta > 0$, the points $\{p_i\}$ is the support of the base locus corresponding to $[\omega_{\C}]\in\mathcal{D}_{>\delta}$ are uniformly separated.

Fix K\"ahler parameters $[\omega_{\C}]$ on $X$ lying in $\mathcal{D}_{>\delta}$. Using $T_{\nabla} \cong \PP(\mathcal{O}(1)\oplus\mathcal{O})$, $C_4$ is given by the zero section, $C_2, C_3$ by fibres, $C_1$ by the infinity section. So identifying $T_{\nabla} \cong \Bl_q \PP^2$ (as toric surfaces with their toric boundaries), we can present the compactified toric LG mirror $Z$ of $X$ as
\begin{equation*}
Z = \Bl_{p_1}\Bl_{q} Y,\,Y:=\Bl_{p_2, \ldots, p_8} \PP^2,
\end{equation*}
where $\{p_2, p_3\} \subset L_1$, $\{p_4, p_5\} \subset L_2$, $\{p_6, p_7, p_8\} \subset L_3$ are non-fixed points in torus invariant lines $L_i \subset \PP^2$, $q = L_1 \cap L_2$ is a torus fixed point, and $p_1$ is a non-fixed point in the exceptional divisor $E_q$. We can study the stability of the cycle $\sum^8_{i = 2} p_i$ as a point of the relevant Chow variety of $\PP^2$ using the Hilbert-Mumford Criterion. An elementary computation shows that this cycle is in fact stable (essentially because all $p_i$ are distinct and at most $3$ of the $p_i$, $i = 2, \ldots, p_8$ lie on a line). It follows that the class on $Y$ given by
\begin{equation*}
[\omega_Y] := \pi^*c_1(\mathcal{O}_{\PP^2}(1)) - \varepsilon^2_1 \sum^8_{i = 2} [E_{p_i}],\,0 < \varepsilon_1 \ll 1
\end{equation*} 
contains a unique cscK representative. In turn, since $Y$ has discrete automorphisms, we find that the class on the compactified LG model $Z$ given by
\begin{equation*}
[\omega_Z] := \pi^*c_1(\mathcal{O}_{\PP^2}(1)) - \varepsilon^2_1 \sum^8_{i = 2} [E_{p_i}] - \varepsilon^2_2 [E_q] - (\varepsilon^2_2 + \varepsilon^2_3) [E_{p_1}],\,0 <  \varepsilon_3 \ll \varepsilon_2 \ll \varepsilon_1 \ll 1 
\end{equation*}  
contains a unique cscK representative. As usual, by the analytic estimates, the parameters $\varepsilon_i$ can be chosen uniformly for $[\omega_{\C}] \in \mathcal{D}_{>\delta}$, so we obtain a map to a moduli space $\mathfrak{M}$ containing $K$-stable polarised rational elliptic surfaces.

The extension to non-generic strata, containing the complex structure mirror to $c_1(X)$, can be handled just as in Sections \ref{C1StrataSec}-\ref{NonGenericSec}, by the polystability of the cycle $p_2 + p_4 + p_6$.

\section{$X = T_{P_{4a}}$}
Consider $X = T_{\Delta} \cong \PP^1 \times \PP^1$ corresponding to the polygon $\Delta = P_{4a}$. Fix a complexified effective divisor $H = (a, b)$ on $X$. The corresponding LG potential is 
\begin{equation*}
f = x + \frac{e^{-a}}{x} + y + \frac{e^{-b}}{y}. 
\end{equation*}
The dual of $\Delta$ is $\nabla = P_{8a}$. The anticanonical embedding of $T_{\nabla} \subset \PP[x_0 : \cdots : x_4]$ is cut out by the quadrics
\begin{align*}
\{x_1 x_3 = x^2_0,\,x_2 x_4 = x^2_0\}
\end{align*}
corresponding to the monoid relations among the vertices of $P_{4a}$,
\begin{align*}
\{v_1 = (0,1),\,v_2 = (1,0),\,v_3 = (0,-1),\,v_4 = (-1,0)\}.
\end{align*}
The anticanonical degree of $T_{\nabla}$ is given by $\vol \Delta = 4$. Thus, $T_{\nabla}$ is a Gorenstein toric degeneration of any degree $4$ del Pezzo $S_4$. The LG potential $f$ extends to a rational anticanonical pencil on $T_{\nabla}$ given by
\begin{equation*}
\mathfrak{d}_{\Delta} = |x_1 + x_2 + e^{-b} x_3 + e^{-a} x_4, x_0|.
\end{equation*}
The irreducible components of the toric boundary $(x_0)$ of $T_{\nabla}$ are
\begin{align*}
C_1 = \PP[x_1: x_2],\, C_2 = \PP[x_1: x_4],\,C_3 = \PP[x_3: x_4],\,C_4 = \PP[x_2:x_3]. 
\end{align*}
Thus, the base locus $\Bs(\mathfrak{d}_{\Delta})$ is given by
\begin{align*}
&p_1 = [1:-1] \in C_1 = \PP[x_1: x_2],\,p_2 = [1: - e^a] \in C_2 = \PP[x_1: x_4],\\
&p_3 = [1: -e^{a-b}] \in C_3 = \PP[x_3: x_4],\,p_4 = [1: -e^b] \in C_4 = \PP[x_2:x_3].
\end{align*}
The barycenter of the reflexive polygon $P_{8a}$ vanishes, so according to \cite{Zhu_orbisolitons}, $T_{\nabla}$ admits a K\"ahler-Einstein orbifold metric $\omega_{\nabla}$. The complexification $G = K^{\C}$ of the Hamiltonian isometry group $K$ of $\omega_{\nabla}$ is isomorphic to the torus $(\C^*)^2$ acting by 
\begin{equation*}
(\lambda, \nu)\cdot [x_0:x_1:x_2:x_3:x_4] = [x_0:\lambda x_1: \nu x_2: \lambda^{-1} x_3: \nu^{-1} x_4].
\end{equation*}
Thus, a 1PS acts by 
\begin{equation*}
t\cdot [x_0:x_1:x_2:x_3:x_4] = [x_0: t^r x_1: t^s x_2: t^{-r} x_3: t^{-s} x_4].
\end{equation*}
The induced action on $\Bs(\mathfrak{d}_{\Delta})$ is 
\begin{align*}
&t\cdot p_1 = [0: t^r:-t^s:0:0],\,t\cdot p_2 = [0: t^{r+s} : 0: 0: - e^a ],\\
&t\cdot p_3 = [0: 0:0:t^{s}: -t^{r} e^{a-b}],\,t\cdot p_4 = [0: 0:t^{r+s}:-e^b:0]. 
\end{align*}
The Hilbert-Mumford computation can be summarised as follows. 
{\tiny 
\begin{align*}
\begin{matrix*}[l]
&r > s,\,r+s>0,&r > s,\,r+s < 0,\\
&p'_1 = [0: 0:1:0:0],\,p'_2 = [0: 0: 0: 0: 1],& p'_1 = [0: 0:1:0:0],\, p'_2 = [0: 1 : 0: 0: 0],\\
&p'_3 = [0: 0:0:1: 0],\, p'_4 = [0: 0:0:1:0],& p'_3 = [0: 0:0:1: 0],\, p'_4 = [0: 0:1:0:0],\\
&\Ch = (s-r) - (r + s) < 0. &\Ch =   (s-r)+(r+s) < 0.\\
\end{matrix*} 
\end{align*}
\begin{align*}
\begin{matrix*}[l]
&r < s,\,r+s > 0,&r < s,\,r+s < 0,\\
& p'_1 = [0: 1:0:0:0],\, p'_2 = [0: 0 : 0: 0: 1],& p'_1 = [0: 1:0:0:0],\, p'_2 = [0: 1: 0: 0: 0 ],\\
& p'_3 = [0: 0:0:0: 1],\, p'_4 = [0: 0:0:1:0],& p'_3 = [0: 0:0:0: 1],\, p'_4 = [0: 0:1:0:0],\\
&\Ch =   (r-s)-(r+s)<0. &\Ch =   (r-s)+(r+s)<0. 
\end{matrix*} 
\end{align*}
\begin{align*}
\begin{matrix*}[l]
&r = s > 0,&r = s < 0,\\
& p'_1 = [0: 1:-1:0:0],\, p'_2 = [0: 0 : 0: 0: 1],& p'_1 = [0: 1:-1:0:0],\, p'_2 = [0: 1 : 0: 0: 0],\\
& p'_3 = [0: 0:0:1: - e^{a-b}],\, p'_4 = [0: 0:0:1:0],& p'_3 = [0: 0:0:1: - e^{a-b}],\, p'_4 = [0: 0: 1:0:0],\\ 
&\Ch = -2r < 0. &\Ch = 2r < 0.
\end{matrix*} 
\end{align*}
}Thus, $\Bs(\mathfrak{d}_{\Delta})$ defines a stable point in the relevant Chow variety of $T_{\nabla}$, with respect to the linearisation induced by $-K_{T_{\nabla}}$. It follows that the blowup $Y := \Bl_{\Bs(\mathfrak{d}_{\Delta})} T_{\nabla}$ admits a cscK representative in the class
\begin{equation*}
[\omega_Y] := \pi^*c_1(T_{\nabla}) - \varepsilon^2 \sum^4_{i = 1} [E_{p_i}],\,0 < \varepsilon \ll 1.
\end{equation*}
Moreover, $\varepsilon  > 0$ can be chosen uniformly as long as $\Bs(\mathfrak{d}_{\Delta})$ is bounded away from the orbifold points of $T_{\nabla}$. The compactified LG model $Z$ for $X$ is given by the crepant resolution of the orbifold singularities of $Y$. Since $Y$ has discrete automorphisms, by \cite{ArezzoPacard}, Theorem 1.3, the crepant resolution $Z$ admits a unique cscK metric in each K\"ahler class
\begin{equation*}
[\omega_{Z}] = \pi^*[\omega_{Y}] + \delta^2 \sum^3_{i=1}  [\tilde{\eta}_i],\, \delta \in (0, \bar{\delta})\text{ for some } \bar{\delta} > 0,
\end{equation*}  
where $[\tilde{\eta}_i]$, for $i=1, \ldots, 4$, is the image in $Z$ of the K\"ahler class $[\eta]$ of an ALE Ricci flat resolution of $\C^2/(\Z/2)$ with vanishing ADM mass.
\section{$T_X = T_{P_{8a}}$}\label{P8aSec}
We consider the toric mirror LG model for a smooth degree $4$ del Pezzo $S_4$ degenerating to the Gorenstein toric del Pezzo $T_X = T_{\Delta}$, $\Delta = P_{8a}$. The polygon $P_{8a}$ may be obtained from (a reflection of) $P_{6}$ by adding by adding to the ordered set of vertices
\begin{align*}
\{v_1 = (0,1),\,v_2 = (1,1),\,v_3 = (1,0),\,v_4 = (0,-1),\,v_5 = (-1,-1),\,v_6 = (-1,0)\}.
\end{align*}
the lattice points $\{ v_7 = (-1,1),\,v_8 = (1,-1)\}$. Thus, we have 
\begin{align*} 
& f_{\widetilde{T}_{\Delta}, H} = f_{S_6, H'} + e^{-a_0 - a_3 - a_4} \frac{y}{x} + e^{-a_0-a_1-a_5}\frac{x}{y}.
\end{align*}
Consider the positively (clockwise) ordered sets of lattice points in each boundary component, 
\begin{align*}
(w_0, w_1, w_2) = (v_7, v_1, v_2),\,(v_2, v_3, v_8),\,(v_8, v_4, v_5),\,(v_5, v_6, v_7).
\end{align*}
Then, setting
\begin{equation*}
f_{\widetilde{T}_{\Delta}, H} = \sum_i m_{v_i} x^{v_i},
\end{equation*}
the coefficient of $f_{S_4, H}$ at $w_1$ is given by the coefficient of $s$ in the polynomial
\begin{equation*} 
m_{w_0}\left(1+\frac{m_{w_1}}{m_{w_0}} s\right)\left(1+\frac{m_{w_2}}{m_{w_1}}s\right).
\end{equation*} 
The dual $\nabla$ to $\Delta$ is given by $\nabla = P_{4a}$. The Laurent polynomial $f_{S_4, H}$ extends to an anticanonical pencil $\mathfrak{d}$ on $T_{\nabla} \cong \PP^1 \times \PP^1$. The anticanonical map $T_{\nabla} \hookrightarrow \PP[x_0:\cdots:x_8]$ is cut out by the quadrics \eqref{quadEqsP6} together with the relations
\begin{align*}
&\{x_2 x_7 = x^2_1,\,x_2 x_8 = x^2_3,\,x_5 x_8 = x^2_4,\,x_5 x_7 = x^2_6\},\,\{x_7 x_8 = x_0^2,\,x_1 x_6 = x_0 x_8,\,x_3 x_4 = x_0 x_7\}.
\end{align*}
Thus, the toric boundary of $T_{\nabla}$ is given by the irreducible components
\begin{align*}
& C_1 := \{x_2 x_7 = x^2_1\} = [s t: s^2: t^2]\subset \PP[x_1:x_2: x_7],\\
&C_2 := \{x_2 x_8 = x^2_3\} = [s^2: s t: t^2]\subset \PP[x_2:x_3:x_8],\\
& C_3 := \{x_5 x_8 = x^2_4\} = [s t: s^2: t^2]\subset \PP[x_4:x_5: x_8],\\
&C_4:=\{x_5 x_7 = x^2_6\} = [s^2: s t: t^2] \subset \PP[x_5:x_6:x_7],
\end{align*}
and the restriction of the pencil $\mathfrak{d}_{\Delta}$ to the boundary components is given by
\begin{align*}
&f_{S_4, H}|_{C_1} \propto \left(e^{a_2}+s\right) \left(s e^{a_0+a_3+a_4}+1\right),\,f_{S_4, H}|_{C_2} \propto \left(e^{a_2}+s\right) \left(s e^{a_0+a_1+a_5}+1\right),\\
&f_{S_4, H}|_{C_3} \propto \left(e^{a_3} s+1\right) \left(s e^{a_1+a_5}+e^{a_3}\right),\,f_{S_4, H}|_{C_4} \propto \left(e^{ a_1 } s+1\right) \left(e^{a_1}+s e^{a_3+a_4}\right).
\end{align*}
The base locus $\Bs(\mathfrak{d}_{\Delta})$ is  
\begin{align*}
&p_1 = [-e^{a_2}:e^{2a_2}:1],\,p_2 = [-e^{-a_0-a_3-a_4}:e^{-2(a_0+a_3+a_4)} :1] \in C_1 \subset \PP[x_1:x_2: x_7],\\ 
&p_3 = [e^{2a_2}:-e^{a_2}:1],\,p_4 =[e^{-2(a_0+a_1+a_5)}:-e^{-a_0-a_1-a_5}:1] \in C_2 \subset \PP[x_2:x_3:x_8],\\
&p_5 = [-e^{-a_3}: e^{-2a_3}:1],\,p_6 = [-e^{-a_1+ a_3-a_5}: e^{-2(a_1- a_3+a_5)} :1] \in C_3 \subset \PP[x_4:x_5: x_8],\\
&p_7 =[e^{-2a_1 }: -e^{-a_1 }:1],\,p_8 =[e^{2(a_1-a_3-a_4)}:-e^{a_1-a_3-a_4} :1] \in C_4 \subset \PP[x_5:x_6:x_7]. 
\end{align*}
Using the anticanonical embedding, we identify each $p_i$ with a point in the toric boundary of $\PP^1 \times \PP^1$ (with a fixed toric structure) which is not torus-fixed. Consider the polystability of the cycle $\sum^8_{i=1} p_i$ under the action of $\Aut_0(\PP^1\times\PP^1) \cong \PP SL(2, \C) \times \PP SL(2, \C)$, with respect to the polarisation $(1, 1)$. In suitable coordinates, a 1PS acts by 
\begin{equation*}
t \cdot ([u_0 : u_1],[z_0: z_1]) = ([\lambda^a u_0 : \lambda^{-a} u_1],[\lambda^{b}z_0: \lambda^{-b}z_1]),\,a \geq b \geq 0.
\end{equation*}     
So, the only possible positive contributions to the Chow weight of the cycle come from points lying on the divisors $u_1 = 0$ or $z_1 = 0$, and we can estimate, for a nontrivial 1PS
\begin{equation*}
\Ch \leq (a+ b) + 2 (a-b) + (-a+b) - 4(a+b) < 0.  
\end{equation*}  
It follows that for $[\omega_{\C}] \in \mathcal{D}_{> \delta}$ the classes on the compactified toric LG model $Z:= \Bl_{\Bs(\mathfrak{d}_{\Delta})}(T_{\nabla})$ given by
\begin{equation*}
[\omega_Z] = \pi^*c_1(\mathcal{O}_{T_{\nabla}}(1,1)) - \varepsilon^2 \sum^8_{i=1}[E_i],\,0<\varepsilon \ll 1
\end{equation*}
is cscK (with $\varepsilon_i$ depending only on $\delta > 0$). A similar analysis shows that the classes on $Z$ given by
\begin{equation*}
[\omega'_Z] = \pi^*\mathcal{O}_{T_{\nabla}}(1,1) -  \varepsilon^2_1 [E_1 + E_3 + E_5 + E_7] - \varepsilon^2_2 [E_2 + E_4 + E_6 + E_8],\,0 < \varepsilon_2 \ll \varepsilon_1 \ll 1  
\end{equation*} 
are also cscK. By our deformation theory argument, for sufficiently small $\delta > 0$, the generic set $\mathcal{D}_{>\delta}$ has nonempty intersection with an open neighbourhood of $c_1(S_4) \in \Ka(S_4)_{\C}$ corresponding to cscK complex structures on $Z$. 

\section{$T_X = T_{P_{4c}}$}\label{QuadraticConeSec}

The toric surface $T_X$ with polytope $P_{4c}$ is a quadratic cone, the unique nontrivial Gorenstein toric degeneration of $X \cong \PP^1 \times \PP^1$. Up to a reflection, the corresponding LG potential for $X$ with respect to $H = (a, b)$ is given in \cite{Przyjalkowski_CYLG}, Example 24,
\begin{equation*}
f = y + \frac{e^{-a}}{x y} + \frac{e^{-a} + e^{-b}}{y} + e^{-b}\frac{x}{y}.
\end{equation*}
The potential $f$ extends to a rational pencil on the dual variety $T_{\nabla} = T_{P_{8c}}$ given by
\begin{equation*}
\mathfrak{d}_{\Delta} = |x_1 +  e^{-a} x_4 + (e^{-a} + e^{-b}) x_3 + e^{-b}x_2 , x_0 |,
\end{equation*}  
where the anticanonical embedding $T_{\nabla} \hookrightarrow \PP^4$ is cut out by 
\begin{align*}
\{x_1 x_3 = x^2_0,\, x_2 x_4 = x^2_3\}.
\end{align*}
Thus, the toric boundary components are given by
\begin{align*}
C_1 = \PP[x_1 : x_4],\,C_2 = \PP[x_1:x_2],\,C_3 = \{x_2 x_4 = x^2_3\} \subset\PP[x_2:x_3:x_4],
\end{align*}
and the base locus $\Bs(f)$ is  
\begin{align*}
& p_1 = [1 : - e^a]\in C_1,\,p_2 = [1: -e^b] \in C_2,\\
& p_3 = [1:-1:1],\,p_4 = [1:-e^{a-b}:e^{2(a-b)}] \in C_3.
\end{align*}
By considering a maximal subdivision of $\nabla$, we find an isomorphism for the crepant resolution 
\begin{equation*}
\widetilde{T}_{\nabla} \cong \Bl_{q'_1, q'_2}\Bl_{q_1, q_2}\PP^1\times\PP^1, 
\end{equation*}
where $q_i$ are torus-fixed points contained in the same fibre, and $q'_i$ denotes a torus-fixed point in the exceptional divisor over $q_i$. Thus, the compactified toric LG model $Z$ is given by
\begin{equation*}
Z = \Bl_{p_1, p_2, p_3, p_4} \Bl_{q'_1, q'_2}\Bl_{q_1, q_2}\PP^1\times\PP^1 \cong \Bl_{q'_1, q'_2} \Bl_{p_1, p_2, p_3, p_4} \Bl_{q_1, q_2}\PP^1\times\PP^1 
\end{equation*}
where the isomorphisms can be chosen so that 
\begin{align*}
& q_1 = ([1: 0], [1:0]),\,q_2 = ([1:0],[0:1]),\, p_1 = ([p_{11}:p_{12}]:[1:0])\\
& p_2 = ([p_{21}:p_{22}]:[0:1]),\,p_3 = ([0:1], [p_{31}:p_{32}]),\,p_4 = ([0:1],[p_{41}:p_{42}]). 
\end{align*}
For $a \neq b$, the cycle $q_1 + q_2 + \sum^4_{i = 1} p_i$ is polystable with respect to the linearised action of $SL(2,\C)\times SL(2, \C)$, and after its blowup the residual automorphisms are discrete. The extension across the special locus $a = b$ can be handled similarly by using the polystability of the cycle $q_1 + q_2 + p_1 + p_2 + p_3$. 

\section{$T_{X} = T_{P_{5a}}$} 

The smooth toric del Pezzo $X = T_{X} = T_{P_{5a}}$ is isomorphic to $\Bl_{r_1} \PP^1 \times \PP^1 \cong \Bl_{r_1, r_2} \PP^2$, i.e. $X \cong S_7$ is a degree $7$ del Pezzo. According to \cite{Przyjalkowski_CYLG}, Example 24, the LG potential of the trivial degeneration of $S_7$ with respect to $H = a_0 l + a_1 E_1 + a_2 E_2$ is given by
\begin{equation*}
f = x + y + e^{-a_0} \frac{1}{x y} + e^{-a_0-a_1} \frac{1}{ y} + e^{-a_2} x y 
\end{equation*}
(this agrees with $P_{5a}$ up to $GL(2, \Z)$). The corresponding anticanonical pencil on $T_{\nabla} = T_{P_{7a}}$ is given by
\begin{equation*}
\mathfrak{d}_{\Delta} = | x_1 + x_3 + e^{-a_0} x_5 + e^{-a_0-a_1} x_4 + e^{-a_2} x_2, x_0 |,
\end{equation*}  
where the anticanonical embedding $T_{\nabla} \hookrightarrow \PP^5$ is cut out by the quadratic equations obtained from \eqref{quadEqsP6} by omitting all relations involving the variable $x_4$ there and replacing $x_6$ there with the current variable $x_4$. So, the boundary components are $C_i \cong \PP[x_i : x_{i+1}]$ for $i = 1, \ldots, 5$, and $\Bs(f)$ is given by 
\begin{align*}
& p_1 = [1 : - e^{a_2}] \in C_1,\,p_2 = [1: - e^{-a_2}] \in C_2,\, p_3 = [1 : - e^{a_0 + a_1}] \in C_3,\\
& p_4 = [1: -e^{-a_1}] \in C_4, \, p_5 = [1: -e^{a_0}] \in C_5.
\end{align*}
Looking at the maximal subdivision of $\nabla$ we find an isomorphism 
\begin{equation*}
\widetilde{T}_{\nabla} \cong \Bl_{q_1, q_2, q_3} \PP^1 \times \PP^1,
\end{equation*}
where the $q_i$ are torus-fixed points. Moreover, we see that the isomorphism can be chosen so that the basepoints $p_1$, $p_2$ are mapped to intersecting toric boundary components of $\PP^1 \times \PP^1$. The compactified toric LG model can be presented as
\begin{equation*}
Z = \Bl_{p_3, p_4, p_5} \Bl_{p_1, p_2, q_1, q_2, q_3} \PP^1 \times \PP^1,
\end{equation*}
and the cycle $p_1 + p_2 + \sum^3_{i=1} q_i$ is polystable, with discrete residual automorphisms.
\section{$T_X = T_{5b}$}
The surface $T_X = T_{5b}$ is the unique nontrivial Gorenstein toric degeneration of $X = S_7$. According to \cite{Przyjalkowski_CYLG}, Example 24, the corresponding LG potential with respect to $H = a_0 l + a_1 E_1 + a_2 E_2$ is given by
\begin{equation}\label{5bLGPotential}
f = x + y + \frac{e^{-a_0}}{x y} + \big(e^{-(a_0+ a_1)} + e^{-(a_0 + a_2)}\big)\frac{1}{y} + e^{-(a_0 + a_1 + a_2)}\frac{x}{y} 
\end{equation} 
(this agrees with $P_{5b}$ up to a reflection). The anticanonical pencil on $T_{\nabla} = T_{P_{7b}}$ induced by $f$ is given by
\begin{equation*}
\mathfrak{d}_{\Delta} = | x_2 + x_1 + e^{-a_0} x_5 + (e^{-(a_0+ a_1)} + e^{-(a_0 + a_2)})x_4 + e^{-(a_0 + a_1 + a_2)}x_3, x_0 |,
\end{equation*}  
where the anticanonical embedding $T_{\nabla} \hookrightarrow \PP^5$ is cut out by 
\begin{align*}
\{x_1 x_4 = x^2_0,\, x_3 x_5 = x^2_4,\,x_1 x_3 = x_0 x_2,\, x_2 x_5 = x_0 x_4\}.
\end{align*}
The toric boundary is  
\begin{align*}
C_1 = \{x_3 x_5 = x^2_4\} \subset \PP[x_3 : x_4: x_5],\,C_2 = \PP[x_1 : x_2],\,C_3 = \PP[x_1 : x_5],\,C_4 = \PP[x_2:x_3],
\end{align*}
so $\Bs(f)$ is given by
\begin{align*}
&p_1 = [1: -e^{-a_1}: e^{-2 a_1}],\, p_2 = [1: -e^{-a_2}: e^{-2 a_2}]\in C_1,\\
&p_3 = [1:-1]\in C_2,\,p_4 = [1:-e^{a_0}] \in C_3,\,p_5 = [1: -e^{a_0 + a_1 + a_2}] \in C_4.
\end{align*}
Considering a maximal subdivision of $\nabla$, we find an isomorphism
\begin{equation*}
\widetilde{T}_{\nabla} \cong \Bl_{q'_2}\Bl_{q_1, q_2} \PP^1 \times \PP^1,
\end{equation*}
where $q_1, q_2$ are torus-fixed points in the same fibre, and $q'_2$ is a torus-fixed point on the exceptional divisor over $q_2$. Moreover, the isomorphism can be chosen so that the basepoints $p_4$, $p_5$ are mapped to intersecting toric boundary components of $\PP^1 \times \PP^1$, one of them not containing $q_1, q_2$. Thus, the cycle $q_1 + q_2 + p_4 + p_5$ is polystable, with discrete residual automorphisms, and we can apply this to the compactified toric LG model
\begin{equation*}
Z = \Bl_{p_1, p_2, p_3} \Bl_{q'_2}\Bl_{p_4, p_5, q_1, q_2} \PP^1 \times \PP^1.
\end{equation*}
\section{$\Delta = P_{6b},\,P_{6c},\,P_{6d}$}
The reflexive polygons $P_{6b},\,P_{6c},\,P_{6d}$ are self-polar (up to $GL(2, \Z)$). We only consider $\Delta = P_{6b}$, the other cases are similar. 

We choose presentations such that $P_{5b}$ is the Newton polygon of \eqref{5bLGPotential}, while $P_{6b}$ is obtained from $P_{5b}$ by adding the lattice point $v_6 = (1, 1)$. Thus, the auxiliary LG potential of the weak del Pezzo $\widetilde{T}_{\Delta}$, with respect to $H = a_0 l + \sum^3_{i = 1} a_i E_i$, is given by
\begin{equation}\label{6bLGPotential}
f_{\widetilde{T}_{\Delta}} = x + y + \frac{e^{-a_0}}{x y} +  e^{-(a_0+ a_1)}  \frac{1}{y} + e^{-(a_0 + a_1 + a_2)}\frac{x}{y} + e^{-a_3} x y,
\end{equation}
while the actual LG potential of a degree $6$ del Pezzo $X$ degenerating to $T_{\Delta}$ is the rational pencil 
\begin{align*}
&f_{X} = (1 + e^{-(a_0 + a_1 + a_2 + a_3)}) x_2 + x_1 +  e^{-a_0} x_5 +  (e^{-(a_0+a_1)} + e^{-(a_0 + a_2)} ) x_4 \\
&+ e^{-(a_0 + a_1 + a_2)}x_3 + e^{-a_3} x_6,
\end{align*}
where the toric boundary is given by 
\begin{align*}
&C_1 = \PP[x_1 : x_6],\,C_2 = \{x_3 x_6 = x^2_2\} \subset \PP[x_2:x_3:x_6],\\
&C_3 = \{x_3 x_5 = x^2_4\} \subset \PP[x_3:x_4:x_5],\,C_4 = \PP[x_1:x_5].
\end{align*}  
So, the base locus is 
\begin{align*}
& p_1 = [1 : -e^{a_3}] \in C_1,\, \, p_2 = [ -e^{-(a_0 + a_1+ a_2)}: 1 : e^{-2(a_0+ a_1 + a_2)}] \in C_2,\\
& p_3 = [-e^{a_3}:1: e^{2a_3}] \in C_2,\,p_4 = [1: -e^{-a_1}: e^{- 2a_1}] \in C_3,\\
& p_5 = [1: -e^{-a_2}: e^{-2a_2}] \in C_3,\, p_6 = [1: -e^{a_0}] \in C_4.
\end{align*}
Considering a maximal subdivision of $\nabla$, which is $GL(2, \Z)$-equivalent to $\Delta$, we find an isomophism 
\begin{equation*}
\widetilde{T}_{\nabla} \cong \Bl_{q'_2} \Bl_{q_1, q_2}\PP^2,
\end{equation*}
where $q_1, q_2$ are torus-fixed points and $q'_2$ is a torus-fixed point in the exceptional divisor over $q_2$. Moreover, we can choose the isomorphism so that the basepoints $p_1, p_6$ are mapped to toric boundary components of $\PP^2$ in the fixed toric structure, distinct from the line through $q_1, q_2$. Then, the cycle $q_1 + q_2 + p_1 + p_6$ is polystable, with discrete residual automorphisms, and we can apply this to the compactified toric LG model
\begin{equation*}
Z = \Bl_{p_2, p_3, p_4, p_5} \Bl_{q'_2}\Bl_{p_1, p_6, q_1, q_2} \PP^2.
\end{equation*}
\section{$\Delta = P_{7a},\,P_{7b}$}\label{7aSec}
We only give some details for $\Delta = P_{7a}$, the other case is similar. We can choose presentations such that $P_{6a}$ is the Newton polygon of \eqref{6bLGPotential}, while $P_{7a}$ is obtained from $P_{6a}$ by adding the point $v_7 = (-1,0)$. The LG potential of the weak del Pezzo $\widetilde{T}_{\Delta}$, with respect to $H = a_0 l + \sum^4_{i = 1} a_i E_i$, is given by
\begin{equation*} 
f_{\widetilde{T}_{\Delta}} = x + y + e^{-a_0}\frac{1}{x y} +  e^{-(a_0+ a_1)}  \frac{1}{y} + e^{-(a_0 + a_1 + a_2)}\frac{x}{y} + e^{-a_3} x y + e^{-(a_0+a_4)} \frac{1}{x},
\end{equation*}
while the actual LG potential of a degree $5$ del Pezzo $X$ degenerating to $T_{\Delta}$ is the rational pencil 
\begin{align*}
&f_{X} = (1 + e^{-(a_0 + a_1 + a_2 + a_3)}) x_2 + x_1 +  e^{-a_0} x_5 +  (e^{-(a_0+a_1)} + e^{-(a_0 + a_2)} ) x_4 \\
&+ e^{-(a_0 + a_1 + a_2)}x_3 + e^{-a_3} x_6 + e^{-(a_0+a_4)} x_7.
\end{align*}
The dual variety is $T_{\nabla} = T_{P_{5a}} \cong \Bl_{r} \PP^1 \times \PP^1$, where $r$ is a torus-fixed point. The toric boundary is  
\begin{align*}
&C_1 = \PP[x_1 : x_6],\,C_2 = \{x_3 x_6 = x^2_2\} \subset \PP[x_2:x_3:x_6],\\
&C_3 = \{x_3 x_5 = x^2_4\} \subset \PP[x_3:x_4:x_5],\,C_4 = \PP[x_5:x_7],\,C_5 = \PP[x_1:x_7],
\end{align*}  
and the base locus  
\begin{align*}
& p_1 = [1 : -e^{a_3}] \in C_1,\, \, p_2 = [ -e^{-(a_0 + a_1+ a_2)}: 1 : e^{-2(a_0+ a_1 + a_2)}] \in C_2,\\
& p_3 = [-e^{a_3}:1: e^{2a_3}] \in C_2,\,p_4 = [1: -e^{-a_1}: e^{- 2a_1}] \in C_3,\\
& p_5 = [1: -e^{-a_2}: e^{-2a_2}] \in C_3,\, p_6 = [1: -e^{a_4}] \in C_4,\,p_7 = [1: -e^{a_0+a_4}] \in C_5.
\end{align*}
We can present the compactified toric LG model as
\begin{equation*}
Z = \Bl_{p_7} \Bl_r \Bl_{p_1, p_2, p_3, p_4, p_5, p_6} \PP^1 \times \PP^1,
\end{equation*}
and the cycle $r + \sum^6_{i = 1} p_i$ is polystable with respect to $SL(2, \C) \times SL(2, \C)$, with discrete residual automorphisms. The extension across the non-generic locus can be handled by the polystability of the cycle $r + p_1 + p_3 + p_5 + p_6$.
\section{Quartic threefold $V_4$}\label{Quartic3foldSec}
In this Section we begin the case by case analysis which leads to the proof of Theorem \ref{MainThmFanos}, following the outline given in Section \ref{3foldsSecIntro}. This is completed in Section \ref{1111Sec}.

Let $X = V_4$ be the Fano given by a smooth quartic threefold. Then $V_4$ admits a Gorenstein toric degeneration to a toric quartic $T_X \cong T_{\Delta}$, where $\Delta$ is the reflexive polytope $P_{4311}$ in the standard Kreuzer-Skarke list \cite{KreuzSkarke}, starting from $P_0$, the standard fan polytope of $\PP^3$. According to \cite{Ilten_weakLGmodels}, the corresponding toric LG model of $(V_4, 0)$ (i.e. $V_4$ endowed with the polarisation $c_1(V_4)$) is given (up to $GL(2, \Z)$ and shifts by a constant) by the Minkowski Laurent polynomial
\begin{align*}
f = \frac{(x + y + z + 1)^4}{x y z}.
\end{align*}
The polar dual $\nabla = \Delta^{\circ}$ is indeed given by $P_0$, so $f$ extends to an anticanonical pencil on $T_{\nabla} \cong \PP^3 = \PP[z_0: z_1: z_2 :z_3]$. In this case, it is easy to see this extension directly, 
\begin{equation*}
\mathfrak{d}_{\Delta} = |(z_0 + z_1 + z_2 + z_3)^4,  z_0 z_1 z_2 z_3|.
\end{equation*}
The base locus $\Bs(f)$ is contained in the toric boundary of $T_{\nabla}$. The general result \cite{Przyjalkowski_CYLG}, Lemma 25 says that, in dimension 3, for each toric stratum, the base locus is given by a union of transverse smooth rational curves, possibly with multiplicities. In the present case this is obvious, as the components $C_i$, $i = 0, \ldots, 3$ are given by
\begin{align*}
&C_0 = \{(z_1 + z_2 + z_3)^4 = 0\} \subset \PP[z_1 : z_2: z_3],\,C_1 = \{(z_0 + z_2 + z_3)^4 = 0\} \subset \PP[z_0 : z_2: z_3],\\ 
&C_2 = \{(z_0 + z_1 + z_3)^4 = 0\} \subset \PP[z_0 : z_1: z_3],\,C_3 = \{(z_0 + z_1 + z_2)^4 = 0\} \subset \PP[z_0 : z_1: z_2].
\end{align*} 
The intersection points of the reduced components, i.e. the nodes of the reduced base locus $(\Bs(f))^{\operatorname{red}}$, are given by
\begin{align*}
&p_1 = C_0 \cap C_1 = [0 : 0 : 1 : -1],\,p_2 = C_0 \cap C_2 = [0:1:0:-1],\\ 
&p_3 = C_0 \cap C_3 = [0: 1: -1:0],\,p_4 = C_1\cap C_2 = [1:0:0:-1]\\
&p_5 = C_1\cap C_3 = [1:0:-1:0],\,p_6 = C_2 \cap C_3 = [1:-1:0:0].
\end{align*}
According to \cite{Przyjalkowski_CYLG}, Proposition 26, a log Calabi-Yau compactified LG model $Z$ can be obtained by resolving the base locus of $\mathfrak{d}_{\Delta}$ by iterated blowups of its irreducible components in some fixed order. So the construction of $Z$ is not canonical.  

Moreover, at least when $T_{\nabla}$ is smooth, we also have the freedom of replacing $T_{\nabla}$ by its blowup along (some or all of the) $1$-dimensional strata of the toric boundary $D$ of $T_{\nabla}$, while replacing $D$ with $\pi^*D - E$. In the terminology of \cite{GrossHackingKeel_LCY} we are performing an admissible change of the toric model. Of course, we also have a similar freedom in dimension $2$ (by blowing up smooth torus fixed points), but there we have a canonical (minimal) choice of the toric model. 

In the present case, as a first step we blow up the $1$-dimensional toric stratum given by
\begin{equation*}
L_1 := \{z_0 = z_1 =0\} \cup L_2:=\{z_2 = z_3 = 0\}. 
\end{equation*}
So we replace $T_{\nabla}$ with $Z_1 = \Bl_{L_1} T_{\nabla}$ and the toric boundary and base locus with their proper transforms $D_1$, $\Bs(f)$ (with a standard abuse of notation for the latter). This new triple $(Z_1, D_1, f)$ still satisfies the assumptions of \cite{Przyjalkowski_CYLG}, Proposition 26. The base locus now contains the proper transforms of $C^{\operatorname{red}}_i$ together with the fibres of $E$ over the points $p_1$, $p_6$, the latter new components having strictly lower multiplicity. 

Note that $Z_1$ is isomorphic to the blowup of $\PP^2$ in disjoint lines or alternatively to the projectivised bundle $\PP(\mathcal{O} \oplus \mathcal{O}(1, -1))$ over $\PP^1 \times \PP^1$. Thus, $Z_1$ is K\"ahler-Einstein, i.e. the class
\begin{equation*}
\pi^*[\omega_{T_{\nabla}}] - [E_{L_1}]-[E_{L_2}]
\end{equation*} 
admits a cscK representative. For us however it is easier to work with a class
\begin{equation*}
[\omega_{Z_1}] = \pi^*[\omega_{T_{\nabla}}] - \varepsilon^2_1([E_{L_1}]+[E_{L_2}]),0 < \varepsilon^2_1 \ll 1. 
\end{equation*} 
Seyeddali and Sz\'{e}kelyhidi \cite{GaborSeyyedali} prove that the blowup of an extremal metric along a submanifold of codimension greater than $2$ which is invariant under a maximal torus in the Hamiltonian isometry group is still extremal, in classes making the exceptional divisor sufficiently small. They also conjecture that the same result holds when the submanifold has codimension $2$. Thus, assuming their conjecture, $[\omega_{Z_1}]$ admits an extremal representative $\omega_{Z_1}$. On the other hand, according to \cite{SektnanTipler_3folds}, Section 5.2, the Futaki character of $Z_1$ vanishes identically on the K\"ahler cone of $Z_1$. Thus the extremal metric $\omega_{Z_1}$ is necessarily cscK.

Similarly, at the second step, we modify the toric model again by blowing up a second $1$-dimensional toric stratum given by the proper transforms $L'_3, L'_4$ of the lines
\begin{equation*}
L_3 := \{z_1 = z_2 =0\} \cup L_4:=\{z_0 = z_3 = 0\}. 
\end{equation*}
Consider first the Chow stability of $L_3$ and $L_4$ as subvarieties of $T_{\nabla} \cong \PP^3$ with respect to a $SL(H^0(\mathcal{O}(1))^{\vee})$ linarisation of action of the reductive group $\Aut_0(Z_1)$ (the special linear condition will correspond to the required normalisation on a moment map). The linearised action is induced by projective linear transformations of the form
\begin{align*}
\left(\begin{matrix*}[l]
A & 0\\
0 & B
\end{matrix*}
\right),\,A, B \in GL(2, \C),\,\det A  B = 1,
\end{align*}
\emph{with respect to the coordinates $z_i$}. We need to check that the orbit of $L_3 + L_4$, as a cycle of points in the Grassmannian, is closed. Equivalently, we can check that the orbit of $(L_3 \cap L_1, L_3 \cap L_2, L_4 \cap L_1, L_4 \cap L_2)$ as a point of $(\PP^1)^4$ with the induced linearised action of the form $(A, B, A, B)$, is closed. We have, in the coordinates $[z_2 : z_3]$ of $L_1$,
\begin{align*}
& L_3 \cap L_1 = [0:1],\,A (0, 1)^T = (a_{12}, a_{22})^T,\\
& L_4 \cap L_1 = [1:0],\,A (1, 0)^T = (a_{11}, a_{12})^T.
\end{align*}  
Similarly, in the coordinates $[z_0 : z_1]$ of $L_2$,
\begin{align*}
& L_3 \cap L_2 = [1:0],\,B (1, 0)^T = (b_{11}, b_{12})^T,\\
& L_4 \cap L_2 = [0:1],\,B (0,1)^T = (b_{12}, b_{22})^T.
\end{align*}  
We can check closure of the orbit along a 1PS. If $(a_{12}, a_{22})^T$ vanishes as $t \to 0$, then either $(a_{11}, a_{12})^T$ blows up, or (by the special linear normalisation) one of $(b_{11}, b_{12})^T$, $(b_{12}, b_{22})^T$ must blow up. A similar argument applies to the vanishing of any other component. So if the 1PS has a strictly positive component, then it also has a strictly negative one, i.e. the orbit of $L_3 + L_4$ is polystable. By a standard continuity argument, the submanifold $L'_3 \cup L'_4 \subset Z_1$ is Chow polystable for the action of $\Aut_0(Z_1)$ linearised on $[\omega_{Z_1}]$ for sufficiently small $\varepsilon_1 > 0$. 

According to Seyeddali-Sz\'{e}kelyhidi \cite{GaborSeyyedali}, at least conjecturally (since the codimension of $L'_3 \cup L'_4$ is $2$, i.e. critical) this implies that the blowup $Z_2 := \Bl_{L'_3 \cup L'_4} Z_1$ is cscK with respect to the classes
\begin{equation*}
[\omega_{Z_2}] = \pi^*[\omega_{Z_1}] - \varepsilon^2_2([E_{L'_3}] + [E_{L'_4}]).  
\end{equation*}
Indeed, following \cite{GaborSeyyedali}, in general the obstruction to finding a cscK metric on the blowup $\Bl_V M$  of a cscK $(M, \omega_M)$ is given by the vanishing condition
\begin{equation*}
\int_{V} \mu\,\omega_{M} = 0.
\end{equation*}
This is proved in \cite{GaborSeyyedali} in the case of codimension greater than $2$, and conjectured there to hold in the codimension $2$ case as well. The map 
\begin{equation*}
V \mapsto \int_{V} \mu\,\omega_{M}
\end{equation*}
is in fact a moment map for the induced action on the relevant Chow variety. Thus, at least conjecturally, the condition is given by the Chow polystability of the cycle $V$.

Going back to our case, the residual automorphisms at the second step are
\begin{equation}\label{Quartic3foldTorus}
\Aut_0(Z_2) = \left(\begin{matrix*}[l]
A & 0\\
0 & B
\end{matrix*}
\right),\,A, B \in GL(2, \C),\,\det A  B = 1,\,A, B\text{ diagonal}.
\end{equation}

At the third step, we start blowing up components of the base locus. Let $C'_0$, $C'_3$ denote the reduced, disjoint curves given by the proper transforms of $C^{\operatorname{red}}_0$, $C^{\operatorname{red}}_3$ on $Z_2$ (they are disjoint since the node $p_3$ lies on $L_4$). We need to check that the submanifold $C'_0 \cup C'_3\subset Z_2$ is Chow polystable with respect to the action of $\Aut_0(Z_2)$ linearised on $[\omega_{Z_2}]$. For $0 < \epsilon_2 \ll \epsilon_1 \ll 1$, it is sufficient to check that $C^{\operatorname{red}}_0 \cup C^{\operatorname{red}}_3 \subset T_{\nabla}$ is polystable with respect to the linearised action \eqref{Quartic3foldTorus}. The action of a 1PS on the Pl\"ucker coordinates is given by the ordered minors of the matrices     
\begin{align*}
\left(\begin{matrix*}[l]
t^{a} & 0 & 0 & 0\\
0 & t^{b} & 0 &0\\
0 & 0 & t^{c} &0\\
0 & 0 &0 &t^{d} 
\end{matrix*}
\right)\left(\begin{matrix*}[l]
1 & 0\\
0 & 1\\
0 & 1\\
0& 1
\end{matrix*}
\right),\,\left(\begin{matrix*}[l]
t^{a} & 0 & 0 & 0\\
0 & t^{b} & 0 &0\\
0 & 0 & t^{c} &0\\
0 & 0 &0 &t^{d} 
\end{matrix*}
\right)\left(\begin{matrix*}[l]
0 & 1\\
0 & 1\\
0 & 1\\
1& 0
\end{matrix*}
\right) 
\end{align*}
and so explicitly by
\begin{align*}
&[t^{a + b} : t^{a + c} : t^{a + d} : 0: 0 : 0],\,[0: 0: t^{a + d}: 0: t^{b + d}:  t^{c + d}].
\end{align*}
By the special linear condition, one of $a + b$, $c + d$ is strictly negative, unless they both vanish, i.e. $b = -a$, $d = -c$. So one of $a+c$, $b+d$ is strictly negative unless $a = -c$, $b= c$, $d = -c$. But in that case $a + d = - 2c$, $b + c = 2c$ so one of the two is strictly negative unless $c = 0$ and the 1PS is trivial. This shows (conditionally on the Seyeddali-Sz\'{e}kelyhidi conjecture) that the blowup $Z_3 := \Bl_{C'_{0} \cup C'_3} Z_2$ is cscK with respect to the classes
\begin{equation*}
[\omega_{Z_3}] = \pi^*[\omega_{Z_2}] - \varepsilon^2_3([E_{C'_0}] + [E_{C'_3}]),   
\end{equation*}
and also that the group $\Aut_0(Z_3)$ is discrete. So from this point we can proceed to resolve the base locus by iterated blowups of reduced components, preserving the cscK condition in classes which make the exceptional divisors sufficiently small, since there are no more (conjectural) obstructions to be checked. 
\subsection{Adiabatic $K$-instability}\label{AdiabaticSecQuartic}
This Section contains the proof of Lemma stated in Remark \ref{3foldsAdiabaticRmk}.

Let us show that the compactified toric LG model $f\!: Z \to \PP^1$ for a quartic $V_4$ constructed in the previous Section (which is $K$-stable with respect to our polarisations) is adiabatically $K$-unstable. This is similar to the case of the LG model for the cubic surface considered in Section \ref{AdiabaticSec}. Namely, we consider the fibre $F$ of $f$ such that its reduction is the proper transform of the hyperplane $\{z_0 + z_1 + z_2 + z_3 = 0\}$. Since the pencil $\mathfrak{d}_{\Delta}$ is given by $|(z_0 + z_1 + z_2 + z_3)^4,  z_0 z_1 z_2 z_3|$, we see that $F$ is a simple normal crossing divisor with multiplicity $\operatorname{ord}_Z(F) = 4$. As $F$ is simple normal crossing and the total space $Z$ is smooth, by definition of the log canonical threshold, we have $\operatorname{lct}_Z(F) = \frac{1}{4}$. Now we can apply the canonical bundle formula, 
\begin{equation*}
K_{Z} \sim_{\mathbb{Q}} f^*(K_{\PP^1} + M + B)
\end{equation*}
where $M$ and $B$ are the moduli and discriminant divisors, respectively. By the construction of $Z$ as a log Calabi-Yau compactified LG model, we also have
\begin{equation*}
K_{Z} \sim_{\mathbb{Q}} f^*(\mathcal{O}_{\PP^1}(-\infty)).
\end{equation*}  
This shows that $\deg(M+B) = 1$. On the other hand, the discriminant divisor is given by
\begin{equation*}
B = \sum_{P \in \PP^1} \big(1- \operatorname{lct}_Z(f^{*}(P))\big)P.
\end{equation*}
According to \cite{Hattori_moduli}, Theorem 1.1 and Corollary A.17, if we write $B = \sum_P a_P P$, then $f\!: Z \to \PP^1$ is adiabatically $K$-unstable if and only if we have
\begin{equation*}
\max_P a_P > \frac{\deg(M+B)}{2}. 
\end{equation*}
In our case $\max_P a_P \geq \frac{3}{4}$ and $\deg(M+B) = 1$.
\section{$V_8$}
Let $V_8$ be a Fano threefold given by a smooth complete intersection of three quadrics in $\PP^6$. $V_8$ admits several distinct Gorenstein toric degenerations. 

\subsection{$T_{\nabla} = \PP^1 \times \PP^1 \times \PP^1$} We consider the case when $V_8$ degenerates to the toric complete intersection $T_{\Delta}$ with reflexive polytope $\Delta = P_{4250}$ in the Kreuzer-Skarke list. In this case, the polar dual $\nabla$ is given by $P_{30}$, namely, we have $T_{\nabla} \cong \PP^1 \times \PP^1 \times \PP^1$. According to \cite{Ilten_weakLGmodels}, the corresponding toric LG model of $(V_8, 0)$ is the Minkowski Laurent polynomial
\begin{align*}
f = \frac{(x+1)^2(y+1)^2(z+1)^2}{x y z}.
\end{align*}
This extends to the anticanonical pencil on $T_{\nabla} \cong \PP[x_0 : x_1] \times \PP[y_0 : y_1] \times \PP[z_0 : z_1]$ given by
\begin{equation*}
\mathfrak{d}_{\Delta} = |(x_0 + x_1)^2(y_0 + y_1)^2 (z_0 + z_1)^2, x_0 x_1 y_0 y_1 z_0 z_1|.
\end{equation*}
The base locus $\Bs(f)$ is the union of the rational curves with multiplicities
\begin{align*}
& C_{x_0} = \{(y_0 + y_1)^2 (z_0 + z_1)^2 = 0\} \subset \{[0: 1]\} \times \PP[y_0 : y_1] \times \PP[z_0 : z_1],\\
& C_{x_1} = \{(y_0 + y_1)^2 (z_0 + z_1)^2 = 0\} \subset \{[1: 0]\} \times \PP[y_0 : y_1] \times \PP[z_0 : z_1],\\
& C_{y_0} = \{(x_0 + x_1)^2 (z_0 + z_1)^2=0\} \subset \PP[x_0 : x_1] \times \{[0:1]\} \times \PP[z_0 : z_1],\\
& C_{y_1} = \{(x_0 + x_1)^2 (z_0 + z_1)^2=0\} \subset \PP[x_0 : x_1] \times \{[1:0]\} \times \PP[z_0 : z_1],\\
& C_{z_0} = \{(x_0 + x_1)^2 (y_0 + y_1)^2=0\}  \subset \PP[x_0 : x_1]  \times \PP[y_0 : y_1] \times \{[0:1]\},\\
& C_{z_1} = \{(x_0 + x_1)^2 (y_0 + y_1)^2=0\}  \subset \PP[x_0 : x_1]  \times \PP[y_0 : y_1] \times \{[1:0]\}. 
\end{align*}
Assuming the conjecture by Seyeddali-Sz\'{e}kelyhidi, a cscK resolution of $\Bs(f)$ can be obtained as follows. Consider the disjoint components of the reduced base locus given by
\begin{align*}
& C_1 = \{[0: 1]\} \times \{[1 : -1]\} \times \PP[z_0 : z_1],\\ 
& C_2 = \{[1: 0]\} \times \{[1 : -1]\} \times \PP[z_0 : z_1],\\ 
& C_3 = \{[1 : -1]\} \times \{[0:1]\} \times \PP[z_0 : z_1].  
\end{align*}
Then, the cycle $C_1 + C_2 + C_3$ is polystable with respect to the linearised action by $(SL(2, \C))^3$. We blow up $C_1 \cup C_2 \cup C_3$ with small K\"ahler parameter, and consider the cycle given by the proper transforms of 
\begin{align*}
& C_4 = \PP[x_0 : x_1] \times \{[1:0]\} \times \{1:-1\},\\  
& C_5 = \PP[x_0 : x_1]  \times \{[1 : -1]\} \times \{[0:1]\},\\  
& C_6 = \PP[x_0 : x_1]  \times \{[1 : -1]\} \times \{[1:0]\}.   
\end{align*}
Again this is polystable with respect to sufficiently small parameters, and after blowing it up the residual automorphisms are discrete.
\subsection{$T_{\nabla} = \Bl_{L_1 \cup L_2} \PP^3$}\label{V8AltDeg}
An alternative Gorenstein degeneration for $V_8$ is given by the toric complete intersection $T_{\Delta}$ with polytope $\Delta = P_{4166}$ (see \cite{Fanosearch}). The polar dual in this case is given by $\nabla = P_{24}$ and so $T_{\nabla}$ is isomorphic to $\Bl_{L_1 \cup L_2} \PP^3$ with $L_1 := \{z_0 = z_1 =0\}$, $L_2:=\{z_2 = z_3 = 0\}$. The LG potential for the degeneration of $V_8$ to $T_{\nabla}$ is given explicitly in \cite{Coates_quantum},
\begin{align*}
& f = x z^2 + 3 y z^2 + 3 x^{-1} y^2 z^2 + x^{-2} y^3 z^2 + 2 x z + 4 y z + 2 x^{-1} y^2 z + x + y + x y^{-1} z + 3 z\\
& + 3 x^{-1} y z + x^{-2}y^2 z + 4 x y^{-1} + 4 x^{-1} y + 3 x y^{-1} z^{-1} + 3 z^{-1} + 2 x y^{-2} z^{-1} + 4 y^{-1} z^{-1} \\
&+ 2 x^{-1} z^{-1} + 3 x y^{-2} z^{-2} + 3 y^{-1} z^{-2} + x y^{-3} z^{-3} + y^{-2} z^{-3}.
\end{align*}
The rational function $f$ extends to a rational pencil on $T_{\nabla}$. The irreducible components $D_i$ of the toric boundary $D$ of $T_{\nabla}$ are dual to the facets $\Delta_i$ of $\Delta$ for $i = 1,\ldots, 5$. The extension of $f$ to $D$ is determined by the sums of monomials of $f$ supported on the facets of $\Delta$. Namely, we have
\begin{align*}
&f_0 = x z^{2} + x^{-2} y^3 z^2 + x y^{-1} z + x^{-2} y^2 z + 3 z + 3 x^{-1} y z + 3 y z^2 + 3x^{-1} y^2 z^2,\\
&f_1 = x + x z^2 + x y^{-3} z^{-3} + x y^{-1} z + 2 x z + 4 x y^{-1} + 2 x y^{-2} z^{-1} + 3 x y^{-1} z^{-1} + 3 x y^{-2} z^{-2},\\
&f_2 = x + y + x z^2 + x^{-2} y^3 z^2 + 2 x^{-1} y^2 z + 4 y z + 2 x z + 3 x^{-1} y^2 z^2 + 3 y z^2,\\
&f_3 = x + y + x y^{-3} z^{-3} + y^{-2} z^{-3} + 3 z^{-1} + 3 y^{-1} z^{-2} + 3 x y^{-1} z^{-1} + 3 x y^{-2} z^{-2},\\
&f_4 = x y^{-3} z^{-3} + y^{-2} z^{-3} + x y^{-1} z +  x^{-2}y^2 z + 2 x y^{-2} z^{-1} + 4 y^{-1} z^{-1} + 2 x^{-1} z^{-1} + 3 z \\&+ 3 x^{-1} y z,\\
&f_5 = y + x^{-2} y^3 z^2 + y^{-2} z^{-3} + x^{-2}y^2 z + 2 x^{-1} y^2 z + 4 x^{-1} y + 2 x^{-1} z^{-1} + 3 z^{-1} + 3 y^{-1} z^{-2}.
\end{align*}
There are two types of boundary components, given by the exceptional divisors $E_{L_1}$, $E_{L_2} \cong \PP^1 \times \PP^1$, respectively by the proper transforms $H_i$ of the toric boundary components $\{z_i = 0\}$ of $\PP^3$, $H_0,\ldots, H_3 \cong \PP^2$. Consider for example the proper transform $H_0$. It is embedded by the anticanonical sections
\begin{align*}
\{ z_1^3 z_2, z_1^2 z_2^2, z_1 z_2^3, z_1^3 z_3, z_1^2 z_2 z_3, z_1 z_2^2 z_3, 
 z_1^2 z_3^2, z_1 z_2 z_3^2, z_1 z_3^3 \} 
 \end{align*} 
and so the corresponding sum of LG monomials can be expressed as
\begin{align*}
&f_{H_0} = z_1^3 z_2 + 2 z_1^2 z_2^2 + z_1 z_2^3 + z_1^3 z_3 + 4 z_1^2 z_2 z_3 + 
 3 z_1 z_2^2 z_3 + 2 z_1^2 z_3^2 + 3 z_1 z_2 z_3^2 + z_1 z_3^3\\   
& = z_1 (z_2 + z_3) (z_1 + z_2 + z_3)^2.
\end{align*}
Similarly, we have
\begin{align*}
&f_{H_1} = z_0 (z_2 + z_3) (z_0 + z_2 + z_3)^2,\\
&f_{H_2} = z_3 (z_0 + z_1) (z_3 + z_0 + z_1)^2,\\
&f_{H_3} = z_2 (z_0 + z_1) (z_2 + z_0 + z_1)^2.
\end{align*}
On the other hand, $E_{L_1}$ is embedded by the anticanonical sections 
\begin{equation*}
\{ z_0^3 z_2, z_0^2 z_1 z_2, z_0 z_1^2 z_2,  z_1^3 z_2, z_0^3 z_3, z_0^2 z_1 z_3, z_0 z_1^2 z_3, z_1^3 z_3 \}
\end{equation*}
(linear in $z_2, z_3$), and so we can write
\begin{align*}
& f_{E_{L_1}} = z_0^3 z_2 + 3 z_0^2 z_1 z_2 + 3 z_0 z_1^2 z_2 + z_1^3 z_2 + z_0^3 z_3 + 
 3 z_0^2 z_1 z_3 + 3 z_0 z_1^2 z_3 + z_1^3 z_3\\
&= (z_0 + z_1)^3 (z_2 + z_3).
\end{align*}
Similarly, $f_{E_{L_2}} = (z_2 + z_3)^3 (z_0 + z_1)$. Given this explicit description of $\Bs(f)$, we can proceed just as in Section \ref{Quartic3foldSec}  to show that the base locus has a cscK resolution.
\section{$V_6$}
Let $V_6$ be a Fano threefold given by a smooth complete intersection of a quadric and a cubic in  $\PP^5$. Then, $V_6$ admits a Gorenstein toric degeneration to $T_{\Delta}$ with $\Delta = P_{4286}$ (see \cite{Fanosearch}). The polar dual is $\nabla = P_{4}$ and so we have $T_{\nabla} \cong \PP^1 \times \PP^2$. According to \cite{Ilten_weakLGmodels}, a corresponding LG potential is given by 
\begin{equation*}
f = \frac{(x+1)^2(y + z + 1)^3}{x y z}.
\end{equation*}
This extends to the anticanonical pencil on $\PP^1 \times \PP^2$ given by
\begin{equation*}
\mathfrak{d}_{\Delta} = |(x_0 + x_1)^2(y_0 + y_1 + y_2)^3 ,  x_0 x_1 y_0 y_1 y_2|.
\end{equation*}
The components of $\Bs(f)$ are given by
\begin{align*}
&C_{x_0} = \{(y_0 + y_1 + y_2)^3 = 0 \} \subset [0:1]\times\PP^2,\\
&C_{x_1} = \{(y_0 + y_1 + y_2)^3 = 0 \} \subset [1:0]\times\PP^2,\\
&C_{y_0} = \{(x_0 + x_1)^2 (y_1 + y_2)^3 = 0\} \subset \PP^1 \times \PP[y_1 : y_2],\\
&C_{y_1} = \{(x_0 + x_1)^2 (y_0 + y_2)^3 = 0\} \subset \PP^1 \times \PP[y_0 : y_2],\\
&C_{y_2} = \{(x_0 + x_1)^2 (y_0 + y_1)^3 = 0\} \subset \PP^1 \times \PP[y_0 : y_1].
\end{align*}
Consider the disjoint components of the base locus given by  
\begin{align*}
C_1 = \{[0:1]\}\times \{y_0 + y_1 + y_2 = 0\},\,C_2 = \{[1:0]\}\times \{y_0 + y_1 + y_2 = 0\}.
\end{align*}
Then, the cycle $C_1 + C_2$ is polystable with respect to the linearised action by $SL(2, \C) \times SL(3, \C)$. We can blow it up with small K\"ahler parameter, and consider the cycle given by the proper transforms of 
\begin{align*}
C_3 = \{[1:-1]\}\times \PP[y_1 : y_2],\,C_4 = \{[1:-1]\}\times \PP[y_0 : y_2],\,C_5 = \{[1:-1]\}\times \PP[y_0 : y_1].
\end{align*}
For sufficiently small parameters, this is polystable with respect to the residual automorphisms, and after blowing it up the automorphisms are discrete. 
\section{$(2, 2)$ divisor in $\PP^2 \times \PP^2$}\label{22Sec}
Let $X$ be a smooth Fano threefold given by a $(2, 2)$ divisor in $\PP^2 \times \PP^2$. Then $X$ admits a Gorenstein toric degeneration to $T_{\Delta}$ where $\Delta$ is the reflexive polytope $P_{3874}$ (see \cite{Fanosearch}). The polar dual $\nabla$ is given by the reflexive polytope $P_{218}$ and so $T_{\nabla}$ is isomorphic to $\PP^1 \times S_6$ where $S_6$ denotes the smooth toric del Pezzo of degree $6$. According to \cite{Coates_quantum} the corresponding LG potential is given by
\begin{align*}
&f = x + y + z + 2 x^{-1} y z + x^{-2} y^2 z + x^{-2} y z^2 + x^{-3} y^2 z^2 + 2 x^2 y^{-1} z^{-1} + 2 x z^{-1} + 2 x y^{-1} \\
&+ 2 x^{-1}y + 2 x^{-1}z + 2 x^{-2}y z + x^3 y^{-2}z^{-2} + x^2 y^{-1}z^{-2} + x^2 y^{-2}z^{-1} + 2 x y^{-1}z^{-1}\\& + z^{-1} + y^{-1} + x^{-1}.
\end{align*}
The sums of LG monomials supported on facets $\Delta_i$, for $i = 1, \ldots, 7$, are given by
\begin{align*}
&f_0 = x^3 y^{-2}z^{-2}+ x^2 y^{-1}z^{-2} + x^2 y^{-2}z^{-1} + z^{-1} + y^{-1} + x^{-1} + 2 x y^{-1}z^{-1}\\
& = \frac{(x+y) (x+z) \left(x^2+y z\right)}{x y^2 z^2},\\
&f_1 = x + z + x^3 y^{-2}z^{-2} + x^2 y^{-2}z^{-1} + 2 x^2 y^{-1} z^{-1} + 2 x y^{-1} = \frac{(x+z) (x+y z)^2}{y^2 z^2},\\
&f_2 = x + y + x^3 y^{-2}z^{-2} +x^2 y^{-1}z^{-2} + 2 x^2 y^{-1} z^{-1} + 2 x z^{-1} = \frac{(x+y) (x+y z)^2}{y^2 z^2},\\
&f_3 = x^{-2} y^2 z + x^{-3} y^2 z^2 + z^{-1} + x^{-1} + 2 x^{-1}y + 2 x^{-2}y z = \frac{(x+z) (x+y z)^2}{x^3 z},
\end{align*}
\begin{align*}
&f_4 = x + y + z + x^{-2} y^2 z + x^{-2} y z^2 + x^{-3} y^2 z^2 + 2 x^{-1} y z = \frac{(x+y) (x+z) \left(x^2+y z\right)}{x^3},\\
&f_5 = z + x^{-2} y z^2 + x^2 y^{-2}z^{-1} + y^{-1} + 2 x y^{-1} + 2 x^{-1}z  = \frac{(x+y z)^2 \left(x^2+y z\right)}{x^2 y^2 z},\\
&f_6 = x^{-2} y z^2 + x^{-3} y^2 z^2 + y^{-1} + x^{-1} + 2 x^{-1}z + 2 x^{-2}y z = \frac{(x+y) (x+y z)^2}{x^3 y},\\
&f_7 = y + x^{-2} y^2 z + x^2 y^{-1}z^{-2} + z^{-1} + 2 x z^{-1} + 2 x^{-1}y = \frac{(x+y z)^2 \left(x^2+y z\right)}{x^2 y z^2}.
\end{align*}
The toric boundary components $D_i$ of $T_{\nabla}$ dual to $\Delta_i$ are of two types: $\PP^1 \times \tilde{D}_j$, where $\tilde{D}_j$ for $j = 1, \ldots, 6$ is a boundary component of $S_6$, respectively $\{q_k\} \times S_6$, where $q_k$, $k = 1, 2$ are distinct torus fixed points. From the above factorisations (which follow from the Minkowski property of $f$) we see that the components of the reduced base locus of $f$ are of three types: $C'_j := \PP^1 \times \{p_j\}$, where $p_j \in \tilde{D}_i$ is a non torus-fixed point, $C''_j := \{r_j\} \times \tilde{D}_j$, where $r_j$ is a non torus-fixed point, and $\{q_k\} \times L$ where $L \subset S_6$ is the proper transform of a non torus-fixed smooth rational curve. 

At the first step we blow up the components $C'_j$, $j = 1, \ldots, 6$. By the computations of Section \ref{Deg6Sec}, the corresponding cycle is polystable for the linearised action of $GL(2, \C) \times \C^*$. The residual linearised automorphisms are $GL(2, \C)$ acting on the first factor of $\PP^1 \times S_6$. 

Secondly we blow up the proper transforms of $\{q_k\} \times L$ for $k = 1, 2$. The corresponding cycle is polystable for the action of $GL(2, \C)$, and the residual linearised automorphisms are isomorphic to $\C^*$ fixing $q_1,\, q_2$.

Finally, we blow up the proper transform of $C''_j$ for any choice of $j = 1, \ldots, 6$. The corresponding cycle is polystable for the action of $\C^*$. After this step the residual automorphisms are discrete. 

\section{$(2, 2, 2)$ double cover of $\PP^2 \times \PP^2$}
Let $X$ be a Fano threefold given by a a double cover of $\PP^1 \times \PP^1 \times \PP^1$ with branch locus a divisor of tridegree $(2,2,2)$. Then, $X$ also admits a Gorenstein toric degeneration to $T_{\Delta}$ for $\Delta = P_{3874}$ (see \cite{Fanosearch}), so as before $T_{\nabla}$ is isomorphic to $\PP^1 \times S_6$. However, the corresponding LG potential is different and given explicitly \cite{Coates_quantum} by
\begin{align*}
&f = x + y + z + 3 x^{-1} y z + x^{-2} y^2 z + x^{-2} y z^2 + x^{-3} y^2 z^2 + 2 x^2 y^{-1}z^{-1} + 2 x z^{-1} + 2 x y^{-1} \\
&+ 2 x^{-1} y + 2 x^{-1}z + 2 x^{-2} y z + x^3 y^{-2}z^{-2} + x^2 y^{-1} z^{-2} + x^2 y^{-2} z^{-1} + 3 x y^{-1} z^{-1}\\
& + z^{-1} + y^{-1} + x^{-1}. 
\end{align*} 
The sums of LG monomials supported on facets $\Delta_i$ coincide with those found in Section \ref{22Sec} except for
\begin{align*}
&f_0 = x^3 y^{-2}z^{-2}+ x^2 y^{-1}z^{-2} + x^2 y^{-2}z^{-1} + z^{-1} + y^{-1} + x^{-1} + 3 x y^{-1}z^{-1}\\
&= \frac{\left(x^2+x y+y z\right) \left(x^2+x z+y z\right)}{x y^2 z^2},
\end{align*}
\begin{align*}
&f_4 = x + y + z + x^{-2} y^2 z + x^{-2} y z^2 + x^{-3} y^2 z^2 + 3 x^{-1} y z = \frac{\left(x^2+x y+y z\right) \left(x^2+x z+y z\right)}{x^3}.
\end{align*}
Given these factorisations, we obtain the same kind of qualitative description of the base locus of $f$ as in Section \ref{22Sec}, and we can proceed as in that case.
\section{$V_{12}$}
Again $V_{12}$, given by a linear section of $OGr(5, 10)$, degenerates to $T_{\Delta}$ for $\Delta = P_{3874}$, with corresponding LG potential
\begin{align*}
&f = x + y + z + 2 x^{-1} y z + x^{-2} y^2 z + x^{-2} y z^2 + x^{-3} y^2 z^2 + 2 x^2 y^{-1} z^{-1} + 2 x z^{-1}\\
& + 2 x y^{-1} + 2 x^{-1}y + 2 x^{-1}z + 2 x{-2} y z + x^3 y^{-2}z^{-2} + x^2 y^{-1}z^{-2} + x^2 y^{-2} z^{-1}\\
& + 3 x y^{-1} z^{-1} + z^{-1} + y^{-1} + x^{-1}
\end{align*}
(see \cite{Fanosearch, Coates_quantum}). The sums of LG monomials supported on facets $\Delta_i$ coincide with those found in Section \ref{22Sec} except for
\begin{align*}
&f_0 = x^3 y^{-2}z^{-2}+ x^2 y^{-1}z^{-2} + x^2 y^{-2}z^{-1} + z^{-1} + y^{-1} + x^{-1} + 3 x y^{-1}z^{-1}\\
&= \frac{\left(x^2+x y+y z\right) \left(x^2+x z+y z\right)}{x y^2 z^2}. 
\end{align*}
So we can proceed to construct cscK classes on the resolution of $\Bs(f)$ as in that case.
\section{$V_{10}$}\label{V10Sec}
According to \cite{Fanosearch}, a Fano threefold $V_{10}$, given by an intersection of $Gr(2, 5)$ with a linear space and a quadric, admits a Gorenstein toric degeneration to $T_{\Delta}$ with reflexive polytope $\Delta = P_{4073}$. The polar dual $\nabla$ is given by $P_{81}$ and so $T_{\nabla}$ is the unique smooth toric Fano threefold of rank $4$ and degree $40$. There is an isomorphism $T_{\nabla} \cong \Bl_F \Bl_{L_1, L_2} \PP^3$, where $L_1$, $L_2$ are disjoint lines and $F$ is a fibre of the projection $E_{L_1} \to L_1$. The linearised action of $\Aut_0(T_{\nabla})$ is induced by projective linear transformations of the form
\begin{align*}
\left(\begin{matrix*}[l]
A & 0\\
0 & B
\end{matrix*}
\right),\,A, B \in GL(2, \C),\,\det A  B = 1,\,B \textrm{ upper triangular},
\end{align*}
with respect to fixed homogeneous coordinates $z_i$ of $\PP^3$ such that $L_1 = \{z_1 = z_2 = 0\}$, $L_2 = \{z_3 = z_4=0\}$. In particular, $\Aut_0(T_{\nabla})$ is not reductive. 

By \cite{Fanosearch}, a toric LG potential for this degeneration is given explicitly by
\begin{align*}
&f = x z^2 + 2 x z + y z + x y^{-1}z^2 + x + y + 3 x y^{-1} z + 3 z + 3 x y^{-1} + 2 x^{-1} y + y^{-1} z\\
& + x y^{-1} z^{-1} + 3 z^{-1} + 2 x^{-1}y z^{-1} + 3 y^{-1} + 2 x^{-1} + 3 y^{-1} z^{-1} + 4 x^{-1}z^{-1} + x^{-2} y z^{-1}\\
& + y^{-1} z^{-2} + 2 x^{-1} z^{-2} + x^{-2} y z^{-2}.
\end{align*}
The corresponding sums of LG monomials supported on facets $\Delta_i$ of $\Delta$ (for $i = 0,\ldots,6$) are given by
\begin{align*}
&f_0 = x z^2 + 2 x z + x y^{-1} z^2 + y^{-1} z + x^{-2} y z^{-1} + 2 x^{-1} + 3 z + 2 x^{-1} y, \\
&f_1 = x + y + x z^2 + y z + 2 x z = (z+1) (x z+x+y),\\
&f_2 = x + x z^2 + x y^{-1} z^{-1} + x y^{-1}z^2 + 3 x y^{-1} + 3 x y^{-1} z + 2 x z = \frac{x (z+1)^2 (y z+z+1)}{y z},\\
&f_3 = y^{-1} z + y^{-1} z^{-2} + x^{-2} y z^{-1} + x^{-2} y z^{-2} + 3 y^{-1} + y^{-1} z + 2 x^{-1} + 4 x^{-1}z^{-1} + 2 x^{-1} z^{-2},\\
&f_4 = x + y + x y^{-1} z^{-1} + y^{-1} z^{-2} +  x^{-2} y z^{-2} + 2 x^{-1} z^{-2} +3 z^{-1} +2 x^{-1}y z^{-1}, \\
&= \frac{(x+y) (x z+1) (x y z+x+y)}{x^2 y z^2},\\
&f_5 = x y^{-1} z^{-1} + x y^{-1}z^2 + y^{-1} z + y^{-1} z^{-2} + 3 y^{-1} z^{-1} + 3 y^{-1} + 3 x y^{-1} + 3 x y^{-1} z\\
& = \frac{(z+1)^3 (x z+1)}{y z^2},\\
&f_6 = y + y z + x^{-2} y z^{-1} + x^{-2} y z^{-2} + 2 x^{-1} y + 2 x^{-1}y z^{-1} = \frac{y (z+1) (x z+1)^2}{x^2 z^2}.
\end{align*}
Comparing with the computations in Section \ref{V8AltDeg}, in the light of the general characterisation of the reduced base locus given in \cite{Przyjalkowski_CYLG}, Lemma 25, we see that the reduced base locus is the transverse union of lines $\{C_i\}$ contained in the proper tranforms $H_0, \ldots, H_3 \cong \PP^2$, fibres $\{F_j\}$ of the proper transform $E'_{L_2} \cong \PP^1 \times \PP^1$, as well as smooth rational curves $\{G_k\}$ contained in the proper transform $E'_{L_2} \cong \PP^1 \times \PP^1$ and in the exceptional divisor $E \cong \PP^1 \times \PP^1$ for the map $T_{\nabla} \cong \Bl_F \Bl_{L_1, L_2} \PP^3 \to \Bl_{L_1, L_2} \PP^3$. Note that since $F$ lies over a torus-fixed point, the components of $\Bs(f)$ contained in $E$ are disjoint from $\{C_i\} \cup \{F_j\}$.

By construction, a choice of compactified toric LG model is given by a blowup 
\begin{equation*}
Z = \Bl_{\{C_i\} \cup \{F_j\}\cup\{G_k\}} \widehat{T}_{\nabla} 
\end{equation*}
in some fixed order, where $\widehat{T}_{\nabla}$ is some toric model obtained by blowing up lines contained in the toric boundary. As in Section \ref{Quartic3foldSec}, it is convenient to replace $T_{\nabla}$ with its model $\widehat{T}_{\nabla}$ obtained by blowing up the proper transforms of the lines
\begin{equation*}
L_3 := \{z_1 = z_2 =0\} \cup L_4:=\{z_0 = z_3 = 0\}. 
\end{equation*}
We abuse notation slightly and keep using the same notation for the proper transforms of the previous components of $\Bs(f)$ in this new model. There will also be some new components contained in $E'_{L_1}$ and $E$; again, as for the $\{G_k\}$, all these components are disjoint from $\{C_i\} \cup \{F_j\}$. We write $\{G'_k\}$ for the set of all these components. We can thus choose a compactified toric LG model given by
\begin{equation*}
Z = \Bl_{\{G_k\}} ( \Bl_{\{C_i\} \cup \{F_j\}} \widehat{T}_{\nabla} )
\end{equation*}
for fixed orderings of $\{C_i\} \cup \{F_j\}$, respectively $\{G_k\}$. In particular, we can proceed as in Section \ref{Quartic3foldSec} by blowing up at the first step two disjoint components $C_{i}$, $C'_{i}$ contained respectively in $H_0$, $H_3$. The Seyeddali-Sz\'{e}kelyhidi conjecture provides cscK classes on $\Bl_{ C_{i}, C'_{i}} \widehat{T}_{\nabla}$ and, since the residual automorphisms after this step are discrete, also on $Z$, for any choice of ordering of the residual components of $\Bs(f)$.
\section{$(1,1,1,1)$ divisor in $(\PP^1)^4$}\label{1111Sec}
Let $X$ be a rank $4$ smooth Fano threefold given by a $(1,1,1,1)$ divisor in $(\PP^1)^4$. Then, according to \cite{Fanosearch}, $X$ admits a Gorenstein toric degeneration to $T_{\Delta}$ with reflevixe polytope $\Delta = P_{1529}$ and Landau-Ginzburg potential
\begin{equation*}
f = x + y + z + x^{-1} y z + x z^{-1} + x y^{-1} + x^{-1} y + x^{-1} z + x y^{-1} z^{-1} + z^{-1} + y^{-1} + x^{-1}.
\end{equation*}
The polar $\nabla$ is the reflexive polytope $P_{2355}$. According to \cite{Graded}, the Gorenstein toric Fanos $T_{\Delta}$ and $T_{\nabla}$ both have isolated terminal singularities. Moreover, $\Delta$ and $\nabla$ are centrally symmetric, so they have vanishing barycenter. In particular $T_{\nabla}$ admits a K\"ahler-Einstein metric $\omega_{\nabla}$ with isolated orbifold points. 

The LG potential $f$ induces an anticanonical pencil $\mathfrak{d}_{\Delta} = |x_1 + \cdots + x_{12}, x_0|$ on $T_{\nabla}$, and the base locus $\Bs(f)$ can be described as follows. The toric boundary components $D_0, \ldots, D_{11}$ of $T_{\nabla}$ are dual to facets of $\Delta$, with
\begin{align*}
& D_0 = \{x_1 x_4 = x_2 x_3\} \subset \PP[x_1 : x_2 : x_3 : x_4],\,f|_{D_0} = x_1 + x_2 + x_3 + x_4,\\
& D_1 = \{x_1 x_9 = x_5 x_6\} \subset\PP[x_1 : x_5 : x_6 : x_9],\,f|_{D_1} = x_1 + x_5 + x_6 + x_9,\\
& D_2 = \PP[x_7: x_8: x_{12}],\,f|_{D_2} = x_7 + x_8 + x_{12},\\
& D_3 = \{x_4 x_{12} = x_7 x_8\} \subset \PP[x_4 : x_7 : x_8 : x_{12}],\,f|_{D_3} = x_4 + x_7 + x_{8} + x_{12},\\
& D_4 = \PP[x_2: x_4: x_{7}],\,f|_{D_4} = x_2 + x_4 + x_{7},\\
& D_5 = \{x_9 x_{12} = x_{10} x_{11}\} \subset \PP[x_9 : x_{10} : x_{11} : x_{12}],\,f|_{D_5} = x_9 + x_{10} + x_{11} + x_{12},\\
& D_6 = \PP[x_3: x_4: x_{8}],\,f|_{D_6} = x_3 + x_4 + x_{8},\\
& D_7 = \{x_2 x_{12} = x_5 x_7\} \subset \PP[x_2 : x_5 : x_7 : x_{12}],\,f|_{D_7} = x_2 + x_5 + x_7 + x_{12},\\
& D_8 = \{x_3 x_{11} = x_6 x_8\} \subset \PP[x_3 : x_6 : x_8 : x_{11}],\,f|_{D_8} = x_3 + x_6 + x_8 + x_{11},\\
& D_9 = \PP[x_6: x_9: x_{11}],\,f|_{D_9} = x_6 + x_9 + x_{11},\\
& D_{10} = \PP[x_8: x_{10}: x_{11}],\,f|_{D_2} = x_8 + x_{10} + x_{11},\\
& D_{11} = \PP[x_8: x_{9}: x_{12}],\,f|_{D_2} = x_8 + x_{9} + x_{12}.
\end{align*}
Thus, each boundary component is isomorphic to $\PP^2$ or $\PP^1 \times \PP^1$, and $\Bs(f)$ is reduced, with irreducible components given by a line in $\PP^2$ or a union of fibres of $\PP^1 \times \PP^1$.

One can check that there are no lattice points contained in the relative interior of facets of $T_{\Delta}$. It follows that the set of Demazure roots of $T_{\nabla}$ is trivial, and in particular that $\Aut_0(T_{\nabla})$ is isomorphic to its maximal torus (see \cite{Nill_reductive}, Sections 3 and 4). Thus, if $K$ is the Hamiltonian isometry group of $\omega_{\nabla}$, with Lie algebra $\mathfrak{k}$, we have $K^{\C} = \Aut_0(T_{\nabla}) \cong (\C^*)^3$, $\mathfrak{k} \cong \C^3$. Moreover, the full automorphism group $\Aut(T_{\nabla})$ is generated by $\Aut_0(T_{\nabla})$ and a finite number of lattice automorphisms of $\Delta$. We can choose the normalised moment map $\mu$ for the action of $\Aut_0(T_{\nabla})$ on $(T_{\nabla}, \omega_{\nabla})$ to be equivariant with respect to $\Aut(T_{\nabla})$. 

Let $\iota \in \Aut(T_{\nabla})$ be the involution of $T_{\nabla}$ induced by the central symmetry of $\nabla$. By the above explicit description,  $\Bs(f)$ can be written as the union of maximal collections of disjoint irreducible components $V_1$, $V_2$ such that $\iota(V_i) = V_i$. Then, $\iota$ acts on $\mathfrak{k}$ as the central reflection, and we have 
\begin{equation*}
\int_{V_1} \mu\,\omega^2_{\nabla} = \int_{V_1} \iota \cdot \mu\,\omega^2_{\nabla} = - \int_{V_1} \mu\,\omega^2_{\nabla}, 
\end{equation*}
so $V_1$ is polystable under the action of $K^{\C}$, and we can apply the Seyeddali-Sz\'{e}kelyhidi conjecture to $\Bl_{V_1} T_{\nabla}$. 

We repeat the previous step with respect to the second maximal collection $V_2$ and apply the Seyeddali-Sz\'{e}kelyhidi conjecture to $\Bl_{V_2} \Bl_{V_1} T_{\nabla}$.  

At this point, the residual automorphisms are discrete, and we may apply the orbifold resolution result of \cite{ArezzoPacard}.

\addcontentsline{toc}{section}{References}
 
\bibliographystyle{abbrv}
 \bibliography{biblio_Kstab}

\noindent SISSA, via Bonomea 265, 34136 Trieste, Italy\\
Institute for Geometry and Physics (IGAP), via Beirut 2, 34151 Trieste, Italy\\
jstoppa@sissa.it    
\end{document}